\numberwithin{equation}{section}
\theoremstyle{plain}
\newtheorem{thm}{Theorem}[section]
\newtheorem{theorem}[thm]{Theorem}
\newtheorem{lemma}[thm]{Lemma}
\newtheorem{lem}[thm]{Lemma}
\newtheorem{cor}[thm]{Corollary}
\newtheorem{pro}[thm]{Proposition}
\theoremstyle{definition}
\newtheorem{remark}[thm]{Remark}
\newtheorem{Rem-Def}[thm]{Remark-Definition}
\newtheorem{claim}[thm]{Claim}
\newtheorem{example}[thm]{Example}
\newtheorem{s:examples}[thm]{s:examples}
\newtheorem{question}[thm]{Question}
\newtheorem{construction}[thm]{Construction}
\numberwithin{equation}{section}
\newcommand{\ga}[2]{\begin{gather}\label{#1}#2 \end{gather}}
\newcommand{\End}{{\rm End}}
\newcommand{\C}{{\mathbb C}}
\renewcommand{\P}{{\mathbb P}}
\newcommand{\R}{{\mathbb R}}
\newcommand{\Z}{{\mathbb Z}}
\newcommand{\id}{{\rm id\hspace{.1ex}}}
\newcommand{\Aut}{{\rm Aut\hspace{.1ex}}}
\newcommand{\Nef}{{\rm Nef\hspace{.1ex}}}
\newcommand{\GL}{{\rm GL\hspace{.1ex}}}
\newcommand{\NS}{{\rm NS}}
\newcommand{\GK}{{\rm GKdim}\,}
\newcommand{\rGK}{{\rm rGKdim}\,}
\newcommand{\ssec}{\subsection}
\newcommand{\ol}{\overline}
\newcommand{\ti}[1]{\tilde{#1}}
\newcommand{\vast}{\bBigg@{4}}
\newcommand{\Vast}{\bBigg@{5}}
\newcommand{\wt}{\widetilde}
\newcommand{\bk}{\mathbf{k}}
\newcommand{\cC}{\mathcal{C}}
\newcommand{\cK}{\mathcal{K}}
\newcommand{\cO}{\mathcal{O}}
\newcommand{\gD}{\Delta}
\newcommand{\gO}{\Omega}
\newcommand{\gS}{\Sigma}
\renewcommand{\ga}{\alpha}
\newcommand{\gaa}{\alpha}
\newcommand{\gb}{\beta}
\newcommand{\gep}{\varepsilon}
\newcommand{\gk}{\kappa}
\newcommand{\gl}{\lambda}
\newcommand{\go}{\omega}
\newcommand{\gs}{\sigma}
\newcommand{\Id}{\mathrm{Id}}
\newcommand{\Int}{\mathrm{Int}}
\newcommand{\lov}{\mathrm{lov}}
\newcommand{\Plov}{\mathrm{Plov}}
\newcommand{\pr}{\mathrm{pr}}
\newcommand{\topp}{\mathrm{top}}
\newcommand{\torsion}{\mathrm{torsion}}
\newcommand{\Vol}{\mathrm{Vol}}
\newcommand{\cf}{\emph{cf.} }
\newcommand{\eg}{\emph{e.g.} }
\newcommand{\bss}{\backslash}
\newcommand{\cnec}{\mathrel{:=}}
\renewcommand{\(}{\left(}
\renewcommand{\)}{\right)}
\newcommand{\cto}{\circlearrowleft}
\newcommand{\dto}{\dashrightarrow}
\newcommand*\eto{%
	\xrightarrow[]{\raisebox{-0.25 em}{\smash{\ensuremath{\sim}}}}%
}
\newcommand{\hto}{\hookrightarrow}
\title[Polynomial log-volume growth and GK-dimension]
{Polynomial log-volume growth 
	and
the GK-dimensions of twisted homogeneous coordinate rings
}
\author{Hsueh-Yung Lin}
\address{Department of Mathematics, National Taiwan University, 
	and National Center for Theoretical Sciences,
	Taipei, Taiwan.}
\email{hsuehyunglin@ntu.edu.tw}
\author{Keiji Oguiso}
\address{Mathematical Sciences, the University of Tokyo, Meguro Komaba 3-8-1,
Tokyo, Japan, and National Center for Theoretical Sciences, Mathematics Division, National Taiwan University,
Taipei, Taiwan}
\email{oguiso@ms.u-tokyo.ac.jp}
\author{De-Qi Zhang}
\address{Department of Mathematics, National University
of Singapore, 10 Lower Kent Ridge Road, Singapore 119076,
Republic of Singapore}
\email{matzdq@nus.edu.sg}
\date{}
\begin{document}

\begin{abstract}
	Let $f$ be a zero entropy automorphism of a compact K\"ahler manifold $X$.
	We study the polynomial log-volume growth $\Plov(f)$ of $f$
	in light of the dynamical filtrations introduced in our previous work with T.-C. Dinh.
	We obtain
	new upper bounds and lower bounds of $\Plov(f)$.
	As a corollary, we
	completely determine $\Plov(f)$
	when $\dim X = 3$,
	extending a result of Artin--Van den Bergh for surfaces.
	When $X$ is projective, $\Plov(f) + 1$ coincides with
	the Gelfand--Kirillov dimensions $\GK (X,f)$ of 
	the twisted homogeneous coordinate rings associated to $(X,f)$.
	Reformulating these results for $\GK (X,f)$, 
	we improve Keeler's bounds of $\GK (X,f)$
	and provide effective upper bounds of 
	$\GK (X,f)$ which only depend on $\dim X$.
\end{abstract}

%

\maketitle

\tableofcontents

\section{Introduction} \label{s:intro}
\subsection{Zero entropy automorphisms}\label{introPlov}
\hfill

Let $X$ be a compact K\"ahler manifold and 
let $f : X \cto$
be an automorphism (\emph{i.e.} biholomorphic self-map)  of $X$.
The topological entropy $h_\topp(f)$ is an invariant
measuring the complexity of the dynamical system $f : X \cto$.
Thanks to Gromov--Yomdin's theorem~\cite{Gromov, Yomdin},
we have
\begin{equation}\label{eqn-GY}
	h_\topp(f) = \log r(f) \ge 0,
\end{equation}
where $r(f)$ is the spectral radius of $f^* : H^\bullet(X,\C) \cto$.

This paper focuses on automorphisms $f$ with \emph{zero entropy}  $h_\topp(f) = 0$ (cf.~Lemma~\ref{lem:equiv_dyn}).
In the context of complex dynamics of compact K\"ahler manifolds, 
they have recently been investigated in various works 
(see \eg \cite{CantatICM, Lo, CO,  FanFuOuchi, DLOZ}).
In these works,
more refined invariants of them are studied,  
such as the polynomial entropy, the polynomial log-volume growth $\Plov(f)$~\cite{CO}, and 
{\it the polynomial growth} $k(f)$
of the pullbacks~\cite{Lo, DLOZ}: 
$$\|(f^{m})^*: H^{1,1}(X)\circlearrowleft\| \asymp_{m \to \infty} m^{k(f)}.$$
New structures of $f^* : H^\bullet(X,\C) \cto$
have also been discovered such as the
dynamical filtrations~\cite[\S 3]{DLOZ}. 
Below is one such consequence, 
which is also relevant to the present work.

\begin{theorem}[{\cite[Theorem 1.1, Remark 3.9.(1)]{DLOZ} and Corollary~\ref{cor-JBboundkd}}]\label{JBbound}
	Let $f \in {\rm Aut}\, (X)$ be an automorphism of zero entropy.
	Assume that $d \cnec \dim X \ge 1$.
	Then $k(f)$ is an even integer which satisfies
	\begin{equation}\label{ub-DLOZ}
		k(f) \le 2(d -1) 
	\end{equation}
	and
	\begin{equation}\label{ub-DLOZgk}
		k(f) \le 2(d -\gk(X)),
	\end{equation}
	where $\gk(X)$ is the Kodaira dimension of $X$.
	Moreover, these estimates are optimal.
\end{theorem}

The upper bound~\eqref{ub-DLOZ} is the most essential part 
and was proven in~\cite[Theorem 1.1]{DLOZ}.
We will prove 
the refinement~\eqref{ub-DLOZgk} 
in Corollary~\ref{cor-JBboundkd},
based on the approach developed in~\cite{DLOZ}.

\subsection{Polynomial log-volume growths}
\hfill

The main goal of this paper is to study the {\it polynomial log-volume growth} $\Plov(f)$
of an automorphism $f : X \cto$. We first recall its definition.
For every $n \ge 1$, let $\Gamma_f(n) \subset X^{n+1}$ be the graph of 
$$f \times f^2 \times \cdots \times f^n : X \to X^n$$ 
and let $\Vol_\go(\Gamma_f(n))$ be the volume of $\Gamma_f(n)$ 
defined with respect to a K\"ahler form $\go$ on $X$.
We then define
$$\Plov(f) \cnec \Plov(X, f) \cnec \limsup_{n \to \infty} \frac{\log \Vol_\go(\Gamma_f(n))}{\log n}
\in [0,\infty].$$
This invariant of $f$ is 
independent of the choice of $\go$ (Lemma~\ref{lem-sminvVol}). 

We will study upper bounds and lower bounds of $\Plov(f)$ 
in terms of $d = \dim X$ and $k(f)$ introduced in \S\ref{introPlov}.
Using dynamical filtrations,
we obtain the following estimates.

\begin{thm}\label{uniform}
	Let $X$ be a compact K\"ahler manifold of dimension $d$ and 
	let $f \in \Aut (X)$ be a zero entropy automorphism.
	\begin{enumerate}
		\setcounter{enumi}{-1}
		\item (Corollary~\ref{cor-k=0}) $\Plov(f) = d$ if and only if $k(f) = 0$.
		\item (Proposition~\ref{pro-KUB} and Theorem~\ref{thm-lb}) Suppose that $k(f) > 0$. Then we have
		$$d + 2k(f) - 2 \le \Plov (f) \le k(f)(d-1) + d.$$
		\item (Theorem~\ref{thm-uniform}) Suppose that $k(f) > 0$ and $d \ge 3$. Then we have
		$$ \Plov(f) \le  k(f)(d-1) + d - 2. $$
	\end{enumerate}
\end{thm}

By Theorem~\ref{JBbound}, we have
$$k(f) \in \Set{0,2,\ldots,2d-2}.$$
Also $\Plov(f)$ has the same parity as $\dim X$; see Corollary~\ref{cor-parity}.
These together with Theorem~\ref{uniform} immediately determine $\Plov(f)$ when $d = 2,3$.

\begin{cor}\label{cor-d=3}
	\hfill
	\begin{enumerate}
		\item  If $d = 2$, then
		$$\Plov(f) = 
		\begin{cases}
			2 \text{ if } k(f) = 0 \\ 
			4 \text{ if } k(f) = 2. 
		\end{cases}
		$$
		\item If $d = 3$, then
		$$\Plov(f) = 
		\begin{cases}
			3 \text{ if } k(f) = 0 \\
			5 \text{ if } k(f) = 2 \\  
			9 \text{ if } k(f) = 4. 
		\end{cases}
		$$
	\end{enumerate}
\end{cor}

Together with Theorem~\ref{JBbound}, 
Theorem~\ref{uniform} implies that 
$\Plov(f) \le 2d^2 - 3d$
whenever $d \ge 3$.
When $d \ge 4$, we will further improve this upper bound  to 
\begin{equation}\label{ineq-dge4}
	\Plov(f) \le 2d^2-3d - 2;
\end{equation}
see Proposition~\ref{uniformdge4}.

\subsection{A conjectural upper bound}
\hfill

When $X$ is a complex torus, we determine $\Plov(f)$
in terms of the pullback $f^* : H^{1,0}(X)  \cto$.

\begin{theorem}\label{thm51}
	Let $X$ be a complex torus of dimension $d$ and $f \in \Aut (X)$ an automorphism
	of zero entropy.
	Assume that the Jordan canonical form of $f^* : H^{1,0}(X)\cto$
	consists of Jordan blocks of sizes $k_1,\ldots,k_p$, counted with multiplicities. Then
	$$\Plov(f) = \sum_{i=1}^{p} k_i^2.$$
	In particular, we have $\Plov(f) \le d^2$, and this upper bound is optimal among complex tori.
\end{theorem}

Theorem~\ref{thm51} also shows that 
the quadratic {\it order} of the upper bounds with respect to $d$ 
in~\eqref{ineq-dge4} is optimal. We will also compute 
$\Plov (X, f)$ for other examples including threefolds; see \S\ref{sec-MainTheorem2}.
As we fail to construct examples of $f: X \cto$ such that $d^2 < \Plov(f) < \infty$ where $d = \dim X$,
presumably the upper bound in~\eqref{ineq-dge4} when $d \ge 4$ is still not optimal.
Taking Corollary~\ref{cor-d=3} and Theorem~\ref{thm51} 
into account, 
it seems reasonable to ask the following questions.

\begin{question}\label{mainquest2}
	Let $X$ be a compact K\"ahler manifold of dimension $d \ge 1$. Let $f \in \Aut (X)$ be a zero entropy automorphism.
	\begin{enumerate}
		\item Is $\Plov (f) \le d^2$?
		\item More precisely, are possible values of $\Plov(X,f)$ always
		realizable by $d$-dimensional tori? 
	\end{enumerate}
\end{question}

Question~\ref{mainquest2} (1) is the analogous question of~\cite[Question 4.1]{CO}
asked for polynomial entropy by Cantat--Paris-Romaskevich.
By~\cite[(2.7)]{CO}, a positive answer to Question~\ref{mainquest2} (1)
also answers~\cite[Question 4.1]{CO} in the affirmative.

The following partial answer to Question~\ref{mainquest2}
is a direct consequence of the above theorems.

\begin{cor}\label{cor-kod}  (see \S \ref{sec-lb})
	Let $X$ be a compact K\"ahler manifold of dimension $d \ge 3$
	and let $f \in \Aut (X)$ be a zero entropy automorphism.
	\begin{enumerate}
		\item If $k(f) \le d$, then $\Plov (f) \le d^2 - 2$.
		In particular, $\Plov (f) \le  d^2 - 2$, whenever $\kappa (X) \ge d/2$.
		\item Question~\ref{mainquest2} 
		has positive answers 
		when $\dim X \le 3$.
	\end{enumerate}
\end{cor}

Recently, Hu and Jiang~\cite{HuJiang} answered Question~\ref{mainquest2} (1) in the affirmative.

\subsection{Gelfand--Kirillov dimension}\label{ssec-GK}
\hfill

When $X$ is projective, 
the polynomial log-volume growth $\Plov(f)$
actually coincides with
some known invariant of $f$
studied in noncommutative algebra.
The following identification is implicit in the seminal work of Keeler~\cite{Ke}. 

\begin{thm}\label{thm-comp}
	Let $X$ be a smooth projective variety defined over an algebraically closed field,
	and let $f \in \Aut(X)$ be a zero entropy automorphism.
	Then
	$$\GK B(X,f,L) - 1 = \Plov(X,f).$$
	Here, $\GK B(X,f,L)$ is the 
	Gelfand--Kirillov dimension (or GK-dimension for short)
	of the twisted homogeneous coordinate ring $B(X,f,L)$
	associated to $f : X \cto $ and any ample line bundle $L$. 
\end{thm}
We refer to Section~\ref{sec-THCR} for the definition and basic properties
of $\GK B(X,f,L)$,
as well as the proof of Theorem~\ref{thm-comp}.
In this regard, 
two of our results are not new for projective varieties.
The first one is the
upper bound in Theorem~\ref{uniform}.(1),
as the estimate  
$$\GK B(X,f,L) -1 \le k(f)(d-1) + d$$
has already been proven in~\cite[Lemma 6.13]{Ke}.
The second one is Corollary~\ref{cor-d=3} (1), due to
Artin--Van den Bergh~\cite[Theorem 1.7]{AvB}.
Our approach based on dynamical filtrations
is however completely different, and extends both results in a non-trivial way.

Thanks to Theorem~\ref{thm-comp},
the main results we prove for $\Plov(f)$ also translate to
new results about the GK-dimension of $B(X,f,L)$;
see Corollary~\ref{uniformGK} for some instances.
So far, the GK-dimension has been studied 
mostly by specialists in noncommutative algebras.
We hope that the dynamical properties of $(X,f)$ might provide a better understanding of
the algebraic structure of $B(X,f,L)$, and \emph{vice versa}.

\subsection{Organization of the paper and a few remarks to the readers}
\hfill

We start with
Section~\ref{sec-plov}, proving basic properties of polynomial log-volume growth
(see e.g. Proposition~\ref{prop1-Kahler}). 
In Section~\ref{sec-filtr}, 
we recall the construction of quasi-nef sequences and dynamical filtrations,
together with their fundamental properties.
We also prove several auxiliary results related to them, 
which will be useful in the study of 
upper and lower bounds of $\Plov(f)$.
Section~\ref{sec-filtr} also contains a proof of the statement in Theorem~\ref{JBbound} 
involving the Kodaira dimension.
Section~\ref{proofuniform} and Section~\ref{sec-lb} are 
devoted to upper and lower bounds of $\Plov(f)$ respectively, 
all together implying Theorem~\ref{uniform} and Corollary \ref{cor-kod}.
In
Section~\ref{sec-MainTheorem3} and Section~\ref{sec-MainTheorem2}, we study explicit examples,
which contain
complete descriptions of $\Plov(f)$ for tori (Theorem~\ref{thm51}).
Section~\ref{sec-THCR} starts with a brief review of twisted homogeneous coordinate rings
and their GK-dimensions.
We recall some fundamental results proven in~\cite{Ke,AvB} (Theorem~\ref{Keeler})
and derive Theorem \ref{thm-comp} as a direct consequence.
We finish Section~\ref{sec-THCR} by Corollary~\ref{uniformGK}, translating results from $\Plov(f)$ to GK-dimensions.

\subsection*{ Notations and conventions}
\hfill

All manifolds are assumed to be connected.
Let $X$ be a compact K\"ahler manifold.
Write
$$H^{i,i}(X,\R) \cnec H^{i,i}(X) \cap H^{2i}(X,\R).$$
For every $\gaa \in H^{i,i}(X,\R)$, if $\gaa \cdot H^{1,1}(X,\R)^{d-i} = 0$ (where $d = \dim X$), 
we write
$$\gaa \equiv 0.$$
We follow \cite{DemaillyMethod} for the basic terminology, like positive classes and cones.

\section{Polynomial log-volume growth} \label{sec-plov}

\ssec{Definition and basic properties}\hfill

Let $X$ be a compact K\"ahler manifold of dimension $d \ge 1$ and
let $f \in \Aut(X)$. Let
$\go \in H^{1,1}(X,\R)$ be a K\"ahler class.
For every integer $n \ge 1$,
the volume of the graph  $\Gamma_f(n) \subset X^{n+1}$
of 
$$f \times f^2 \times \cdots \times f^n : X \to X^n$$
with respect to any K\"ahler metric in the class $\go$
is equal to
$$
\Vol_\go(\Gamma_f(n)) = \int_{\Gamma_f(n)} \frac{1}{d!}\(\sum_{i = 1}^{n+1} \pr_i^*\go\)^d
= \frac{1}{d!}\Delta_n(f, \go)^d,
$$
where $\pr_i : X^{n+1} \to X$ is the projection to the $i$th factor and
$$\Delta_n(f, \go) \cnec \sum_{i = 0}^n (f^i)^*\go \in H^{1,1}(X,\R).$$

Note that the class $\Delta_n(f, \go)$ and the invariant $\Vol_\go(\Gamma_f(n))$ are defined
more generally for any $\go \in H^{1,1}(X,\R)$.
But in order to define $\Plov(f,\go)$ below,
the class $\go$ 
needs to satisfy
$\Delta_n(f, \go)^d \ge 0$.
A natural sufficient condition is that $\go$ is nef.

\begin{lem}\label{lem-sminvVol}
	For every nef $\ga \in \Nef(X) \subset H^{1,1}(X,\R)$, define
	$$\Plov(f,\ga) \cnec \limsup_{n \to \infty} \frac{\log \Vol_\ga(\Gamma_f(n))}{\log n}
	=  \limsup_{n \to \infty} \frac{\log \Delta_n(f, \ga)^d}{\log n}  \in  \R \cup \{-\infty,\infty\},$$
	where we set $\log 0 \cnec -\infty$.
	Then $\Plov(f,\go)$ is independent of $\go$ whenever $\go$ is nef and big, and we have $\Plov(f,\go) \ge 1$.
\end{lem}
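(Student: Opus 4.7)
The plan is to show that for any two big and nef classes $\go_1, \go_2 \in H^{1,1}(X,\R)$, one has $\Plov(f,\go_1) = \Plov(f,\go_2)$, by bounding $\Delta_n(f,\go_1)^d$ and $\Delta_n(f,\go_2)^d$ against each other up to a multiplicative constant independent of $n$.

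The key ingredient I would establish first is the following monotonicity statement: if $\gaa, \gb \in H^{1,1}(X,\R)$ are both nef and $\gb - \gaa$ is pseudo-effective, then $\gaa^d \le \gb^d$. This follows at once from the factorization
$$\gb^d - \gaa^d \;=\; (\gb - \gaa)\cdot\sum_{k=0}^{d-1}\gb^{d-1-k}\gaa^k,$$
because each summand on the right is a product of nef classes, hence a nef class in $H^{d-1,d-1}(X,\R)$, so it pairs nonnegatively with the pseudo-effective class $\gb-\gaa$.

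Next I would exploit the bigness of $\go_1$: its class lies in the interior of the pseudo-effective cone, so there exists $c>0$ with $c\go_1 - \go_2$ pseudo-effective. Since pullback by each automorphism $f^i$ preserves both nefness and pseudo-effectivity, summing over $i=0,\dots,n$ gives that $c\Delta_n(f,\go_1)-\Delta_n(f,\go_2) = \sum_{i=0}^n (f^i)^*(c\go_1-\go_2)$ is pseudo-effective while $\Delta_n(f,\go_1)$ and $\Delta_n(f,\go_2)$ are both nef. Applying the monotonicity statement yields
$$\Delta_n(f,\go_2)^d \;\le\; c^d\,\Delta_n(f,\go_1)^d.$$
Moreover $\Delta_n(f,\go_1)^d \ge \go_1^d > 0$ again by monotonicity plus bigness, so the logarithms are well-defined. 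Taking $\log$, dividing by $\log n$, and passing to the $\limsup$ gives $\Plov(f,\go_2)\le\Plov(f,\go_1)$; the symmetric argument (swapping the roles of $\go_1$ and $\go_2$, both being big) gives the reverse inequality, so equality holds.

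For the projective version, the same argument applies verbatim with $H^{1,1}(X,\R)$ replaced by $N^1(X)_\R$ and K\"ahler/pseudo-effective cones replaced by their algebraic counterparts; here ample classes provide the required big nef classes. I do not anticipate a serious obstacle, but I want to emphasize that it is the \emph{bigness} hypothesis (not merely nefness) that is essential, since it is exactly what produces the comparison constant $c$; this explains why the statement is restricted to nef and big classes rather than arbitrary nef ones.
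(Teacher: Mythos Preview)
Your proof is correct and follows essentially the same approach as the paper: the paper's argument uses the slightly more general monotonicity lemma $(M_1\cdots M_d)\ge (L_1\cdots L_d)$ for nef classes with $M_i-L_i$ pseudo-effective, but applies it only in the special case $M_i=c\Delta_n(f,\go)$, $L_i=\Delta_n(f,\go')$, which is exactly your telescoping inequality. The comparison constant from bigness and the symmetry argument are identical.
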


Lemma~\ref{lem-sminvVol} justifies the well-definedness of
the polynomial logarithmic volume growth of $f$ in the introduction, which
is defined to be
$$\Plov(f) \cnec \Plov (X, f) \cnec \Plov(f,\go),$$
where $\go$ is any nef and big class.
We refer to Corollary~\ref{cor-nefbdry} for an improvement.

To prove Lemma \ref{lem-sminvVol}, we need the following easy but useful result.

\begin{lemma}\label{lem:nef_compar}
	Let $X$ be a compact K\"ahler manifold 
	of dimension $d$,
	and let
	$$L_1\,\, ,\,\, \ldots\,\, ,\,\,  L_d\,\, ,\,\, M_1\,\, ,\,\, \ldots\,\, ,\,\, M_d$$
	be nef classes in $H^{1,1}(X,\R)$ 
	such that $M_i \ge L_i$, i.e., $M_i - L_i$ is pseudo-effective.
	Then
	$$(M_1 \cdots M_d) \ge (L_1 \cdots L_d) .$$
	In particular, $(M_1^d) \ge (L_1^d)$.
\end{lemma}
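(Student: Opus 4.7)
The plan is to reduce the lemma to a single positivity statement via a telescoping expansion. Writing
\begin{equation*}
M_1\cdots M_d - L_1\cdots L_d
= \sum_{i=1}^d M_1\cdots M_{i-1}\cdot(M_i-L_i)\cdot L_{i+1}\cdots L_d,
\end{equation*}
it suffices to show that each summand is non-negative. Each summand has the form $\alpha_1\cdots\alpha_{d-1}\cdot E$, where $\alpha_j$ equals $M_j$ (for $j<i$) or $L_j$ (for $j>i$) and is therefore nef, while $E:=M_i-L_i$ is pseudo-effective by hypothesis.

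The key input is the standard positivity fact that on a compact K\"ahler manifold (resp.\ a projective variety) of dimension $d$, the intersection number of any $d-1$ nef $(1,1)$-classes with a pseudo-effective $(1,1)$-class is non-negative. In the K\"ahler case, I would fix a K\"ahler class $\go$, note that each $\alpha_j+\varepsilon\go$ is K\"ahler and hence represented by a smooth positive $(1,1)$-form, represent $E$ by a closed positive $(1,1)$-current $T$ (Demailly), and observe that
\begin{equation*}
\int_X (\alpha_1+\varepsilon\go)\wedge\cdots\wedge(\alpha_{d-1}+\varepsilon\go)\wedge T \ \ge\ 0
\end{equation*}
as the integral of a positive $(d,d)$-current; letting $\varepsilon\to 0$ gives $\alpha_1\cdots\alpha_{d-1}\cdot E\ge 0$. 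In the projective case, one instead approximates $E$ by effective $\R$-divisors (pseudo-effective classes lie in the closure of the effective cone) and appeals to Kleiman's result that intersections of nef classes with effective cycles are non-negative.

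The only potential technical obstacle is the case of a singular projective $X$, which I would handle by pulling everything back along a resolution $\pi\colon\wt X\to X$, using that $\pi^*$ preserves nefness and pseudo-effectivity, and applying the projection formula to identify the intersection numbers. The final statement $(M_1^d)\ge (L_1^d)$ is then the special case $L_j=L_1$, $M_j=M_1$ for all $j$.
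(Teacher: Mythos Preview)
Your proof is correct and uses essentially the same telescoping argument as the paper: both replace $M_i$ by $L_i$ one factor at a time and invoke the positivity of $(\text{nef})^{d-1}\cdot(\text{pseudo-effective})$. The paper simply records the chain of inequalities
\[
(M_1\cdots M_d)\ge (L_1\cdot M_2\cdots M_d)\ge\cdots\ge (L_1\cdots L_d)
\]
without further comment, while you supply the justification for the underlying positivity fact (via approximation by K\"ahler classes and positive currents in the K\"ahler case, and by effective divisors in the projective case).
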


\begin{proof}
	Inductively, we have
	$$(M_1 \cdots M_d) \ge (L_1 \cdot M_2 \cdots M_d) \ge \cdots
	\ge
	(L_1 \cdots L_j \cdot M_{j+1} \cdots M_d) \ge \cdots
	\ge (L_1 \cdots L_d),$$
	which proves Lemma~\ref{lem:nef_compar}.
\end{proof}

\begin{proof}[Proof of Lemma~\ref{lem-sminvVol}]
	Let $\go$ and $\go'$ be two nef and big classes.
	Then there exists some $\gep > 0$ such that $\go - \gep \go'$ is pseudo-effective.
	Accordingly, $\gD_n(f,\go) - \gep\gD_n(f,\go')$ is pseudo-effective,
	so $\gD_n(f,\go)^d \ge \gep^d\gD_n(f,\go')^d$ by Lemma~\ref{lem:nef_compar},
	and therefore $\Plov(f,\go) \ge \Plov(f,\go')$.
	By symmetry, we have $\Plov(f,\go) = \Plov(f,\go')$.
	
	Finally, since $\go$ is big and nef,
	we have
	$$\Delta_n(f, \go)^d = (\sum_{i = 0}^n (f^i)^*\go)^d 
	\ge \sum_{i = 1}^n ((f^i)^*\go)^d = n\go^d > 0.$$
	Hence
	$$\Plov(f,\go) =  \limsup_{n \to \infty} \frac{\log \Delta_n(f, \go)^d}{\log n}
	\ge  \limsup_{n \to \infty} \frac{\log (\go^d) + \log n }{\log n} = 1.$$
\end{proof}

The following is an immediate consequence of Lemma~\ref{lem-sminvVol}.

\begin{cor}\label{cor-plovinv}
	Let $X$ and $Y$ be compact K\"ahler manifolds
	with automorphisms $f \in \Aut(X)$ and $g \in \Aut(Y)$.
	Suppose that there exists a $\C$-linear isomorphism 
	$$\phi : H^\bullet(X,\C) \eto H^\bullet(Y,\C)$$
	of the cohomology rings such that the following conditions are satisfied:
	\begin{itemize}
		\item[(i)] $\phi \circ f^* = g^* \circ \phi$;
		\item[(ii)] There exists a K\"ahler class $\go \in H^{1,1}(X)$ on $X$
		such that $\phi(\go)$ is K\"ahler  on $Y$.
	\end{itemize}
	Then
	$$\Plov(f) = \Plov(g).$$
	The similar statement holds if $\phi$ is replaced by a $\C$-linear isomorphism of 
	the subalgebras
	$$\psi : \bigoplus_i H^{i,i}(X) \eto \bigoplus_i H^{i,i}(X').$$
\end{cor}

The same argument as in the proof of Lemma~\ref{lem:nef_compar} proves the following.

\begin{lem}\label{lem-nefle} 
	For every nef class $\alpha \in H^{1,1}(X, \R)$,
	we have
	$$\Plov(f,\alpha) \le \Plov(f).$$
\end{lem}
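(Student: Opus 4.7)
The plan is to run essentially the same comparison argument as in the proof of Lemma~\ref{lem-sminvVol}, using Lemma~\ref{lem:nef_compar} applied to $\Delta_n(f,\alpha)$ versus $\Delta_n(f,\omega)$ for a fixed K\"ahler (hence nef and big) class $\omega \in H^{1,1}(X,\R)$. The key point is that once we can dominate $\alpha$ by a multiple of a nef and big class up to a pseudo-effective correction, the same domination passes to the partial Ces\`aro sums $\Delta_n(f,-)$ and then to top self-intersections.

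First, I would fix a K\"ahler class $\omega$ on $X$. Since $\omega$ lies in the interior of the nef cone and $\alpha$ is nef (in particular pseudo-effective), there exists a constant $C > 0$ such that $C\omega - \alpha$ is pseudo-effective (actually K\"ahler for $C$ large). Because $f$ is an automorphism, pullbacks preserve nefness and pseudo-effectiveness; hence for every $i \ge 0$ the class $(f^i)^*(C\omega - \alpha) = C(f^i)^*\omega - (f^i)^*\alpha$ is pseudo-effective, each $(f^i)^*\alpha$ is nef, and each $(f^i)^*\omega$ is nef (and big). Summing from $i = 0$ to $n$, the class
\[
C\,\Delta_n(f,\omega) - \Delta_n(f,\alpha) = \sum_{i=0}^n (f^i)^*(C\omega - \alpha)
\]
is pseudo-effective, while both $\Delta_n(f,\alpha)$ and $C\,\Delta_n(f,\omega)$ are nef.

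Next I would apply Lemma~\ref{lem:nef_compar} with $L_1 = \cdots = L_d = \Delta_n(f,\alpha)$ and $M_1 = \cdots = M_d = C\,\Delta_n(f,\omega)$ to conclude
\[
\Delta_n(f,\alpha)^d \le C^d\,\Delta_n(f,\omega)^d.
\]
Taking logarithms, dividing by $\log n$ and passing to $\limsup_{n\to\infty}$, the multiplicative constant $C^d$ contributes $0$ to the limit, so
\[
\Plov(f,\alpha) \le \Plov(f,\omega) = \Plov(f),
\]
where the last equality is Lemma~\ref{lem-sminvVol}. No step presents a real obstacle; the only thing to keep in mind is that $f$ being an automorphism is what guarantees the preservation of nefness and pseudo-effectiveness under each $(f^i)^*$, which is essential for both applying Lemma~\ref{lem:nef_compar} and for the pseudo-effectivity of the difference $C\,\Delta_n(f,\omega) - \Delta_n(f,\alpha)$.
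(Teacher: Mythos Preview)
Your proof is correct and follows essentially the same approach as the paper: pick a K\"ahler class dominating $\alpha$, use Lemma~\ref{lem:nef_compar} to compare $\Delta_n(f,\alpha)^d$ with $\Delta_n(f,\omega)^d$, and invoke Lemma~\ref{lem-sminvVol}. The only cosmetic difference is that the paper absorbs your constant $C$ into $\omega$ at the outset by choosing $\omega$ K\"ahler with $\omega - \alpha$ already pseudo-effective.
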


\begin{proof}
	Take a K\"ahler class $\omega$ such that $\go \ge \alpha$.
	By Lemma~\ref{lem:nef_compar}, we have
	$\gD_n(f,\go)^d \ge
	\gD_n(f,\gaa)^d$
	for every integer $n \ge 0$.
	Hence $\Plov(f,\alpha) \le \Plov(f)$.
\end{proof}

Now we prove some basic dynamical properties of $\Plov(X, f)$
summarised in the following,
which will be frequently used in this paper.

\begin{pro}\label{prop1-Kahler}	
	Let $f : X \cto$ be an automorphism of a compact K\"ahler manifold. 	
	\begin{enumerate}
		\item (Independence of the metric and positivity; Lemma~\ref{lem-sminvVol})
		The invariant $\Plov(f)$ is independent of $\go \in H^{1,1}(X,\R)$
		whenever $\go$ is nef and big, and we have $\Plov(f) \ge 1$.
		\item (Finiteness and integrality; Lemmas~\ref{lem:equiv_dyn} and \ref{lem35})
		We have $\Plov(f) < \infty$ if and only if
		$f^* : H^{1,1}(X) \cto$ is quasi-unipotent.
		In this case, $\Plov(f)$ is an integer.
		\item (Finite index; Lemma~\ref{lem-invfini})
		We have $\Plov(f) = \Plov(f^N)$
		for any integer $N \not= 0$.
		\item (Product; Lemma~\ref{lem-pruduct}) Let $X_i$ ($i=1$, $2$) be compact K\"ahler manifolds
		and let $f_i \in \Aut (X_i)$.
		Then
		$$\Plov (f_1 \times f_2) = \Plov (f_1) + \Plov (f_2) $$
		for the product automorphism $f_1 \times f_2 \in \Aut(X_1 \times X_2)$.
		\item (Invariance under generically finite maps; Lemma~\ref{lem-invft}) Let $X$ and $Y$ be
		compact K\"ahler manifolds and
		$f_X \in \Aut (X)$ and $f_Y \in \Aut (Y)$.
		Let $\phi : X \dasharrow Y$
		be a generically finite dominant meromorphic map such that $f_Y \circ \phi = \phi \circ f_X$. Then 	
		$$\Plov(f_X) = \Plov(f_Y).$$	
		\item (Restriction; see Lemma~\ref{lem-restr}, also for the precise 
		definition of $\Plov(f_{|W})$ when $W$ is not smooth)
		Let $W \subset X$ be a closed subvariety such that $f(W) = W$.
		Then $\Plov(f|_{W}) \le \Plov(f)$ for the automorphism $f|_{W} \in \Aut (W)$ induced from $f$ by restriction.
	\end{enumerate}
\end{pro}

First, we prove that $\Plov(X, f)$ is invariant under taking finite iterations.

\begin{lem}\label{lem-invfini}
	For every integer $N \ne 0$, we have $\Plov(f^N) = \Plov(f)$.
\end{lem}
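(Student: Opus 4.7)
The plan is to fix a K\"ahler class $\go$ on $X$ and handle $N > 0$ and $N < 0$ separately. The case $N < 0$ reduces to the positive case via the observation that $\gD_n(f^{-1}, \go) = (f^{-n})^* \gD_n(f, \go)$ (reindex the sum $\sum_{i=0}^n (f^{-i})^*\go$ by $j = n-i$). Since the top self-intersection number is invariant under pullback by an automorphism, $\gD_n(f^{-1},\go)^d = \gD_n(f,\go)^d$ for every $n$, so $\Plov(f^{-1}) = \Plov(f)$; for a general $N < 0$ we write $f^N = (f^{-1})^{|N|}$ and invoke the positive case.

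For the core case $N > 0$, the key device is the auxiliary class
$$\go' \cnec \gD_{N-1}(f, \go) = \sum_{r=0}^{N-1}(f^r)^*\go,$$
which is again K\"ahler (hence nef and big) as a sum of K\"ahler classes, together with the combinatorial identity
$$\gD_n(f^N, \go') = \sum_{i=0}^n \sum_{r=0}^{N-1}(f^{Ni+r})^*\go = \gD_{Nn+N-1}(f, \go).$$
By Lemma~\ref{lem-sminvVol}, $\Plov(f^N)$ may be computed with $\go'$ in place of a K\"ahler class, so this identity yields $\Plov(f^N) = \limsup_{n \to \infty} \log \gD_{Nn+N-1}(f,\go)^d / \log n$.

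To conclude $\Plov(f^N) = \Plov(f)$, I use two monotonicity comparisons via Lemma~\ref{lem:nef_compar}. For the ``$\le$'' direction, $\gD_{Nn}(f, \go) - \gD_n(f^N, \go)$ is a sum of nef classes (the left side contains all terms on the right plus more), so $\gD_n(f^N,\go)^d \le \gD_{Nn}(f,\go)^d$; dividing by $\log n$ and using $\log(Nn)/\log n \to 1$ gives $\Plov(f^N) = \Plov(f^N,\go) \le \Plov(f)$. For the ``$\ge$'' direction, given any $m \ge 0$, set $n = \lceil m/N \rceil$ so that $Nn + N - 1 \ge m$ and hence $\gD_{Nn+N-1}(f,\go)^d \ge \gD_m(f,\go)^d$; since $\log n / \log m \to 1$, passing to the limsup in $m$ shows that $\Plov(f,\go) = \Plov(f)$ is dominated by $\Plov(f^N)$.

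There is no serious obstacle. The only technical care needed is the standard bookkeeping with $\log$-ratios of the type $\log(an+b)/\log n \to 1$ to push the limsup estimates through; this requires nothing beyond the definition of limsup and the identity/comparisons above.
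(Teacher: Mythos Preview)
Your proof is correct and follows essentially the same approach as the paper: the reduction to $N>0$ via $\gD_n(f^{-1},\go)=(f^{-n})^*\gD_n(f,\go)$ is identical, and for $N>0$ both arguments compare $\gD_n(f^N,\cdot)$ with $\gD_{\approx Nn}(f,\cdot)$ using Lemma~\ref{lem:nef_compar} and then invoke Lemma~\ref{lem-sminvVol} to change the K\"ahler class. Your packaging via the single auxiliary class $\go'=\gD_{N-1}(f,\go)$ and the exact identity $\gD_n(f^N,\go')=\gD_{Nn+N-1}(f,\go)$ is slightly more streamlined than the paper's sandwich with the family $\go_{r,j}$, but the ingredients are the same.
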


\begin{proof}
	Since
	$$\(\sum_{i = 0}^n (f^{-i})^*\go\)^d
	= \((f^{-n})^*\sum_{i = 0}^n (f^i)^*\go\)^d
	= \(\sum_{i = 0}^n (f^i)^*\go\)^d,$$
	we have $\Plov(f^{-1}) = \Plov(f)$.
	So it suffices to prove Lemma~\ref{lem-invfini} for $N > 0$.
	
	For every integers $r$ and $j>0$ such that $0 \le r < N$,
	consider the K\"ahler form
	$\go_{r,j} \cnec \sum_{i = r}^{r + j -1} (f^{i})^*\go$.
	Then
	$$\go_{r,(m+1)N} \le \go_{0,r} + \go_{r,(m+1)N} = \go_{0,r + (m+1)N} \le \go_{r-N,(m+2)N}.$$
	So
	$$\Vol_{\go_{r,(m+1)N}}(X) \le \Vol_{\go_{0,r + (m+1)N}}(X) \le \Vol_{\go_{r-N, (m+2)N}}(X)$$
	by Lemma~\ref{lem:nef_compar}, and thus
	$$\Vol_{\sum_{j= 0}^{N-1}(f^{r+j})^*\go}(\Gamma_{f^N}(m))
	\le \Vol_{\go}(\Gamma_{f}(r + mN))
	\le \Vol_{\sum_{j= 1}^{N}(f^{r-j})^*\go}(\Gamma_{f^N}(m + 1)).$$
	By Lemma~\ref{lem-sminvVol}, we have
	$$\limsup_{m \to \infty}\frac{\log \Vol_{\sum_{j= 0}^{N-1}(f^{r+j})^*\go}(\Gamma_{f^N}(m))}{\log m}
	= \Plov(f^N) =
	\limsup_{m \to \infty}\frac{\log \Vol_{\sum_{j= 1}^{N}(f^{r-j})^*\go}(\Gamma_{f^N}(m + 1))}{\log (m+ 1)},$$
	so for every integer $r$ such that $0 \le r < N$, we have
	$$
	\limsup_{m \to \infty}\frac{\log \Vol_{\go}(\Gamma_{f}(r + mN))}{\log (r+mN)}
	= \limsup_{m \to \infty}\frac{\log \Vol_{\go}(\Gamma_{f}(r + mN))}{\log (m)}
	= \Plov(f).
	$$
	Hence $\Plov(f) = \Plov(f^N)$.
\end{proof}

\begin{cor}\label{cor-nefbdry}
	Let $\ga \in \Nef(X)$. We have
	$$\Plov(f,\ga) = \Plov(f) \in [1,\infty]$$ 
	as long as $\Plov(f,\ga) \ne -\infty$.
\end{cor}

\begin{proof}
	
	Suppose that $\Plov(f,\ga) \ne -\infty$.
	Then $ \Delta_N(f, \ga)^d > 0$ for some  integer $N \ge 0$.
	Since $\go \cnec \Delta_N(f, \ga)$ is nef, it is thus big. Using Lemma~\ref{lem-invfini},
	we have
	\begin{equation*}
		\begin{split}
			\Plov(f,\ga) & = \limsup_{n \to \infty} \frac{\log \Delta_n(f, \ga)^d}{\log n} \\
			& \ge \limsup_{k \to \infty} \frac{\log \Delta_{Nk-1}(f, \ga)^d}{\log (Nk - 1)}
			= \limsup_{k \to \infty} \frac{\log \Delta_{k}(f^N, \go)^d}{\log k } = \Plov(f^N) = \Plov(f) 
		\end{split}
	\end{equation*}
	It follows from Lemma~\ref{lem-nefle} that $\Plov(f,\ga) = \Plov(f)$.
\end{proof}

We can characterize whether a holomorphic automorphism $f \in \Aut(X)$ has
zero entropy based on the finiteness of $\Plov(f)$.

\begin{lemma}\label{lem:equiv_dyn}
	Let $X$ be a compact K\"ahler manifold of dimension $d \ge 1$ and let $f \in \Aut(X)$. 
	Then the following conditions are equivalent.
	
	\begin{enumerate}
		\item
		$f^* : {H^{\bullet}(X, \C)} \cto$ is quasi-unipotent, i.e., a positive power of it is unipotent.
		\item
		$f^* : {H^{1,1}(X)} \cto$ is quasi-unipotent.
		\item
		the first dynamical degree $d_1(f) = 1$.
		\item
		the topological entropy $h_{\topp}(f) = 0$.
		\item The growth of $ \Vol_\go(\Gamma_f(n))$ for any K\"ahler class $\go$
		is sub-exponential, namely
		$$\limsup_{n \to \infty} \Vol_\go(\Gamma_f(n))^{1/n} = 1.$$
		\item
		$\Plov(f) < \infty$. In other words,
		the growth of $ \Vol_\go(\Gamma_f(n))$ for any K\"ahler class $\go$
		is polynomial.
	\end{enumerate}
\end{lemma}

Here we recall that for $1 \le i \le d$,
the $i$-th {\it dynamical degree} of $f$ is defined as
\begin{equation}\label{eqn-projddeg}
	d_i(f) := \lim_{n \to \infty} (\omega^{d - i} \cdot (f^n)^*\omega^i)^{1/n},
\end{equation}
where $\omega \in H^{1,1}(X)$ is a K\"ahler class~\cite{DS3}; 
these $d_i(f)$ are independent of $\go$. 	

\begin{proof}
	The equivalence of the first five conditions is well-known and is obtained as follows.
	By Gromov--Yomdin's theorem (\cf \cite{Gromov}, \cite{Yomdin}; see also~\cite[Theorem 3.6]{OguisoICM}), we have
	$$h_{\topp}(f) = \lov(f) = \log r(f^*) = \log(\max_{1 \le i \le d} \, \{d_i(f)\}),$$
	where $r(f^*)$  is the spectral radius of $f^* : H^\bullet(X,\C) \circlearrowleft$, and
	$$\lov(f) \cnec  \limsup_{n \to \infty} \frac{\log \Vol_\go(\Gamma_f(n))}{n}.$$
	Together with the log concavity of dynamical degrees $d_i(f)$ 
	(which follows from Khovanskii--Teissier's inequality), this
	implies that $h_{\topp}(f) > 0$ if and only if $d_i(f) > 1$ for some (and hence all)
	$i \in \{1, \cdots, d-1\}$.
	Thus the equivalence of the first five assertions follows from Kronecker's theorem. 
	Also, since  
	$$\frac{\log n}{n} \cdot \Plov(f) \ge \lov(f)  = h_{\topp}(f) \ge 0$$
	for all $n>1$,
	(6) implies
	these assertions.
	
	To see that (2) implies (6),
	recall that in order to compute $\Plov(f)$, by Lemma~\ref{lem-invfini}
	we can replace $f$ by some iteration of it,
	so that $f^* : H^{1,1}(X) \circlearrowleft$ is unipotent.
	Hence $\Plov(f) < \infty$ is a consequence of Lemma~\ref{lem35} below.
\end{proof}

Next, we prove the invariance of $\Plov(f)$ under generically finite meromorphic maps.

\begin{lemma}\label{lem-invft}
	Let $X$ and $Y$ be
	compact K\"ahler manifolds of dimension $d \ge 1$ and
	$f_X \in \Aut (X)$ and $f_Y \in \Aut (Y)$.
	Let $\phi : X \dasharrow Y$
	be a generically finite dominant meromorphic map such that $f_Y \circ \phi = \phi \circ f_X$.
	Then 
	$$\Plov(f_X) = \Plov(f_Y).$$
\end{lemma}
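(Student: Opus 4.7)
The plan is to prove both equalities through a common resolution of the indeterminacy of $\phi$. The equality of dynamical degrees $d_p(f_X) = d_p(f_Y)$ for every $p \ge 1$ is a standard consequence of the invariance of dynamical degrees under equivariant generically finite meromorphic conjugacy, and can be obtained by the projection formula after resolving $\phi$ to a holomorphic morphism.

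For the equality of polynomial log-volume growths, I would first choose a smooth compact K\"ahler manifold $Z$ with a bimeromorphic morphism $p : Z \to X$ and a holomorphic generically finite surjection $q := \phi \circ p : Z \to Y$ of degree $e := \deg \phi$. Let $g_Z := p^{-1} \circ f_X \circ p : Z \dasharrow Z$ be the bimeromorphic self-map obtained by conjugating $f_X$ through $p$. The equivariance $\phi \circ f_X = f_Y \circ \phi$ immediately gives, for every $i \ge 0$, the equalities $p \circ g_Z^i = f_X^i \circ p$ and $q \circ g_Z^i = f_Y^i \circ q$ (both sides are in fact honest morphisms $Z \to X$ and $Z \to Y$, respectively). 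Defining $(g_Z^i)^*$ on $H^{1,1}(Z,\R)$ via a smooth resolution of the graph of $g_Z^i$ and applying the projection formula to this resolution, one obtains
$$
(g_Z^i)^*(p^*\omega_X) = p^*(f_X^i)^*\omega_X, \qquad (g_Z^i)^*(q^*\omega_Y) = q^*(f_Y^i)^*\omega_Y
$$
for any fixed K\"ahler classes $\omega_X$ on $X$ and $\omega_Y$ on $Y$.

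Setting $\alpha_n := p^*\Delta_n(f_X,\omega_X)$ and $\beta_n := q^*\Delta_n(f_Y,\omega_Y)$, both are nef classes on $Z$, and the projection formula yields $\alpha_n^d = \Delta_n(f_X,\omega_X)^d$ (since $p$ is bimeromorphic) and $\beta_n^d = e \cdot \Delta_n(f_Y,\omega_Y)^d$ (since $q$ has degree $e$). Both $p^*\omega_X$ and $q^*\omega_Y$ are nef \emph{and} big on $Z$: the former because $p$ is bimeromorphic, the latter because $q$ is generically finite. Hence there exists a constant $C > 0$ such that both $Cp^*\omega_X - q^*\omega_Y$ and $Cq^*\omega_Y - p^*\omega_X$ are pseudo-effective on $Z$. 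Because the pullback $(g_Z^i)^*$ defined through a graph resolution preserves pseudo-effectivity (pushforward by a bimeromorphic morphism and pullback by a holomorphic morphism both preserve positive currents), summing the identities displayed above over $i = 0,\ldots,n$ yields pseudo-effective inequalities $C\alpha_n \ge \beta_n$ and $C\beta_n \ge \alpha_n$ on $Z$. Applying Lemma~\ref{lem:nef_compar} to the nef classes $\alpha_n$ and $\beta_n$ then gives $C^d\alpha_n^d \ge \beta_n^d$ and $C^d\beta_n^d \ge \alpha_n^d$, so $\Delta_n(f_X,\omega_X)^d$ and $\Delta_n(f_Y,\omega_Y)^d$ agree up to a bounded multiplicative factor independent of $n$. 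Taking $\log$, dividing by $\log n$, and passing to $\limsup$ then gives $\Plov(f_X) = \Plov(f_Y)$.

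The main subtle point is the functoriality of the meromorphic pullback $(g_Z^i)^*$: one needs the two cohomological identities displayed above, as well as preservation of pseudo-effectivity under $(g_Z^i)^*$. Since $g_Z^i$ is bimeromorphically conjugate to the honest automorphism $f_X^i$ of $X$ via $p$, a smooth resolution of the graph of $g_Z^i$ in $Z \times Z$ can be obtained from the trivial graph resolution of $f_X^i$ pulled back along $p$; the required identities and positivity statements then reduce to a routine application of the projection formula on this graph resolution.
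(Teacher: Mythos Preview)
Your argument is correct, but the paper's proof takes a cleaner route that avoids meromorphic self-maps entirely. Instead of resolving $\phi$ from the $X$ side and conjugating $f_X$ to a merely bimeromorphic $g_Z$, the paper considers the Zariski closure $\Gamma \subset X \times Y$ of the graph of $\phi$: since $f_X$ and $f_Y$ are both automorphisms, the restriction $f_\Gamma := (f_X \times f_Y)|_\Gamma$ is an \emph{automorphism} of $\Gamma$, and functorial resolution of singularities produces a smooth K\"ahler $\widetilde{\Gamma}$ with an automorphism $f_{\widetilde{\Gamma}}$ lifting $f_\Gamma$. Both projections $\widetilde{\Gamma} \to X$ and $\widetilde{\Gamma} \to Y$ are then holomorphic, generically finite, and equivariant with respect to honest automorphisms, reducing everything to the holomorphic case. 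There the argument is a one-liner: for a holomorphic generically finite $\phi$ one has $\Delta_n(f_X,\phi^*\omega)^d = \deg(\phi)\cdot\Delta_n(f_Y,\omega)^d$, and since $\phi^*\omega$ is nef and big, Lemma~\ref{lem-sminvVol} gives $\Plov(f_X) = \Plov(f_X,\phi^*\omega) = \Plov(f_Y)$.

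The trade-off: the paper's trick of passing to the graph buys you an automorphism upstairs and eliminates the ``subtle point'' about meromorphic pullback that you correctly flag and handle; the cost is invoking functorial (equivariant) resolution of singularities. Your approach avoids equivariant resolution but pays for it with the bookkeeping around $(g_Z^i)^*$ (well-definedness via graph resolution, the identities for $p^*\omega_X$ and $q^*\omega_Y$, and preservation of pseudo-effectivity). Both are valid; the paper's is shorter and conceptually more transparent.
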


\begin{proof}
	First we reduce to the case where $\phi$ is holomorphic.
	Let $\Gamma$ be the Zariski closure of the graph of $\phi$ in $X \times Y$. Let $p_X : \Gamma \to X$ and $p_Y : \Gamma \to Y$ be the projection. Since $f_X \in \Aut(X)$ and $f_Y \in \Aut(Y)$, it follows that
	$$f_{\Gamma} := (f_X \times f_Y)|_{\Gamma} \in \Aut (\Gamma)$$
	and $f_{\Gamma}$ and $f_X$ (resp. $f_{\Gamma}$ and $f_Y$) are equivariant with respect to a generically finite surjective morphism $p_X$ (resp. $p_Y$).
	By the existence of functorial resolution of singularities~\cite{BM} (see also~\cite[Theorem 3.45]{KollarResSing}),
	there exists a K\"ahler desingularization
	$\nu : \wt{\Gamma} \to \Gamma$
	such that
	$f_\Gamma \circ \nu = \nu \circ f_{\wt{\Gamma}}$
	for some $f_{\wt{\Gamma}} \in \Aut(\wt{\Gamma})$.
	If Lemma~\ref{lem-invft} holds whenever $\phi$ is holomorphic,
	then  $ \Plov(f_X) = \Plov(f_{\wt{\Gamma}}) = \Plov(f_Y)$.
	
	For every K\"ahler class $\go$ on $Y$, since
	$$
	\(\sum_{i = 0}^n (f_X^i)^*(\phi^*\go)\)^d
	= \phi^*\(\sum_{i = 0}^n (f_Y^i)^*\go\)^d
	= \deg(\phi) \cdot \(\sum_{i = 0}^n (f_Y^i)^*\go\)^d,$$
	we have $\Plov(f_X,\phi^*\go) = \Plov(f_Y,\go) = \Plov(f_Y)$.
	As $\phi^*\go$ is nef and big, it follows from Lemma~\ref{lem-sminvVol}
	that $\Plov(f_X) = \Plov(f_Y)$.
\end{proof}

\begin{lem}\label{lem-restr}
	Let $W \subset X$ be a closed subvariety such that $f(W) = W$.
	Then
	$\Plov(f|_{W}) \le \Plov(f)$ for the automorphism $f|_{W} \in \Aut (W)$ induced from $f$ by restriction. Here we define
	$$\Plov(f|_{W}) := \Plov(\wt{W}, \tilde{f}),$$
	where $\tau : \wt{W} \to W$
	is any K\"ahler desingularization of $W$ such that
	$f|_{W} \circ \tau = \tau \circ \ti{f}$ for some $\tilde{f} \in \Aut (\wt{W})$,
	which does not depend on the choice of $\widetilde{W}$ by Lemma \ref{lem-invft}.
\end{lem}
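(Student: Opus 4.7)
The plan is to reduce the computation of $\Plov(\tilde f)$ to an intersection number on $X$ and then dominate it by $\Delta_n(f,\go)^d$. Let $e := \dim W$ (the case $e = 0$ is trivial, so I assume $e \ge 1$), and set $\psi := \iota \circ \tau : \wt W \to X$, where $\iota : W \hookrightarrow X$ denotes the inclusion. The pullback class $\psi^*\go$ on $\wt W$ is nef, and since $\psi$ is birational onto $W$,
\[
(\psi^*\go)^e = \int_W \go^e|_W > 0,
\]
so $\psi^*\go$ is also big. Applying Lemma~\ref{lem-sminvVol} then lets me compute $\Plov(\tilde f) = \Plov(\tilde f, \psi^*\go)$. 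The equivariance $\psi \circ \tilde f = f \circ \psi$ gives $\Delta_n(\tilde f, \psi^*\go) = \psi^*\Delta_n(f, \go)$, and the projection formula (combined with $\psi_*[\wt W] = [W]$) then yields
\[
(\Delta_n(\tilde f, \psi^*\go))^e = [W] \cdot \Delta_n(f, \go)^e,
\]
where $[W] \in H^{d-e, d-e}(X, \R)$ is the cohomology class of $W$. Hence $\Plov(\tilde f) = \limsup_{n \to \infty} \log\bigl([W] \cdot \Delta_n(f, \go)^e\bigr)/\log n$.

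\textbf{Main estimate.} The heart of the argument will be to exhibit a constant $C > 0$, independent of $n$, such that
\[
[W] \cdot \Delta_n(f, \go)^e \le C \cdot \Delta_n(f, \go)^d,
\]
from which $\Plov(\tilde f) \le \Plov(f)$ follows by taking $\limsup$. I would first pick a smooth real $(d-e, d-e)$-form $\phi$ representing $[W]$. Since $\go$ is K\"ahler, $\go^{d-e}$ lies in the interior of the strongly positive cone in $\Lambda^{d-e, d-e}T^*_x X$ at each $x$; by compactness of $X$ and smoothness of $\phi$, I can find $C > 0$ such that $C\go^{d-e} - \phi$ is strongly positive pointwise. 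On the other hand, $\Delta_n(f, \go) = \sum_{i=0}^{n}(f^i)^*\go$ is K\"ahler, so any K\"ahler representative $\omega_n$ gives a positive $(e,e)$-form $\omega_n^e$. Wedging a strongly positive form against a positive form and integrating produces a non-negative number, so $[W] \cdot \Delta_n(f, \go)^e \le C \cdot \go^{d-e} \cdot \Delta_n(f, \go)^e$. Finally, since $\Delta_n(f, \go) - \go = \sum_{i \ge 1}(f^i)^*\go$ is a sum of K\"ahler classes (hence pseudo-effective), Lemma~\ref{lem:nef_compar} applied $(d-e)$ times gives $\go^{d-e} \cdot \Delta_n(f, \go)^e \le \Delta_n(f, \go)^{d-e} \cdot \Delta_n(f, \go)^e = \Delta_n(f, \go)^d$, and combining the two inequalities yields the claim.

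\textbf{Main obstacle.} The hard part will be the pointwise strong-positivity estimate $C\go^{d-e} \ge \phi$. This hinges on the fact that $\go^{d-e}$ actually lies in the \emph{interior} of the strongly positive cone in $\Lambda^{d-e, d-e}T^*_x X$ (so that a small smooth perturbation by $\phi$ remains in the cone), together with a uniform compactness argument over $X$. Everything else in the proof is either functorial bookkeeping (projection formula, equivariance of $\Delta_n$, independence of K\"ahler class via Lemma~\ref{lem-sminvVol}) or a direct application of the nef intersection inequality of Lemma~\ref{lem:nef_compar}.
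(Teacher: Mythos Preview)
Your proposal is correct and follows essentially the same route as the paper's proof: pull back $\omega$ to $\wt W$, use the projection formula to rewrite $\Delta_n(\tilde f,\psi^*\omega)^e$ as $[W]\cdot\Delta_n(f,\omega)^e$, bound this by $C\,\omega^{d-e}\cdot\Delta_n(f,\omega)^e$, and finish with Lemma~\ref{lem:nef_compar}. The only cosmetic difference is that the paper phrases your ``main obstacle'' purely cohomologically---after scaling $\omega$, the class $\omega^{d-e}-[W]$ pairs non-negatively with every product of $e$ K\"ahler classes---thereby sidestepping the pointwise form-level positivity analysis (where, incidentally, you only need $C\omega^{d-e}-\phi$ to be \emph{weakly} positive, since $\omega_n^e$ is already strongly positive).
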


\begin{proof}
	Let $\nu: \wt{W} \to X$ be the composition of $\tau$ with the inclusion $W \hto X$.
	Let $d \cnec \dim X$ and $e \cnec \dim W$.
	Let $\go$ be a K\"ahler class of $X$.
	Up to replacing $\go$ by some positive multiple of it,
	we can assume that
	$$(\go^{d-e} - [W]) \cdot \gb \ge 0$$
	for every $\gb$ in the closed convex cone generated by products of
	$e$ K\"ahler classes.
	
	Since $\sum_{i = 0}^n(f^i)^*(\go) - \go$ is K\"ahler when $n \ge 1$,
	by Lemma~\ref{lem:nef_compar} we have,
	\begin{equation}
		\begin{split}
			\(\sum_{i = 0}^n(\ti{f}^i)^*(\nu^*\go)\)^e
			& = \(\(\sum_{i = 0}^n(f^i)^*(\go)\)^e \cdot [W]\) \\
			& \le
			\(\(\sum_{i = 0}^n(f^i)^*(\go)\)^e \cdot \go^{d-e}\) \le \(\sum_{i = 0}^n(f^i)^*(\go)\)^d.
		\end{split}
	\end{equation}
	So $\Plov(\ti{f},\nu^*\go) \le \Plov(f,\go) = \Plov(f)$.
	As $\nu^*\go$ is nef and big, we have $\Plov(\ti{f}) = \Plov(\ti{f},\nu^*\go)$
	by Lemma~\ref{lem-sminvVol}. Hence $\Plov(f|_{W}) \le \Plov(f)$.
\end{proof}

\ssec{Cohomological polynomial growth $k(f)$}\hfill

Assume that $f^* : H^{1,1}(X) \cto$ is unipotent.
The operator 
$$ N\cnec f^* - \Id : H^{1,1}(X) \to H^{1,1}(X)$$
is thus nilpotent, and we define
$$k(f) \cnec  \max \Set{k \in \Z | ( f^* - \Id)^k \ne 0}.$$
Equivalently, $k(f)+1$ is the maximal size of the Jordan blocks
of the Jordan canonical form of $f^* : H^{1,1}(X) \cto$.
If $f^* : H^{1,1}(X) \cto$ is quasi-unipotent, we define
$$k(f) \cnec k(f^M),$$
where $M$ is a positive integer such that $(f^*)^M$ is unipotent; this invariant is independent of $M$.
Finally if $f^* : H^{1,1}(X) \cto$ is not quasi-unipotent, we set $k(f) = \infty$.

The following result implies in particular that $k(f)$ is invariant under bimeromorphic modifications.

\begin{pro}\label{pro-birJB} 
	Let  $\pi : X \dto Y$ be a dominant, generically finite meromorphic map between
	compact K\"ahler manifolds.
	Let $f_X \in \Aut(X)$ and $f_Y \in \Aut(Y)$ be automorphisms such that
	$$\pi \circ f_X = f_Y \circ \pi.$$
	Then
	$$k(f_X) = k(f_Y).$$
\end{pro}

We shall also prove the following.

\begin{lem}\label{lem-kprod}
	Let $X$ and $Y$ be compact K\"ahler manifolds. Let $f_X \in \Aut(X)$ and $f_Y \in \Aut(Y)$.
	Then we have 
	$$k(f_X \times f_Y) = \max \Set{k(f_X), k(f_Y) }.$$
\end{lem}

To prove both  Proposition~\ref{pro-birJB} and Lemma~\ref{lem-kprod}, we need the following result in linear algebra.

\begin{lem}\label{lem-BPFk}	
	Let $V$ be a finite dimensional vector space over $\R$
	and let $\phi \in \GL(V)$ be a unipotent operator. 
	Let $N \cnec \phi - \Id_V$ and let $k$ denote the largest integer such that $N^{k} \ne 0$.
	Assume that $\phi$ preserves a closed salient convex cone
	$\cC \subset V$ with nonempty interior. 
	Then for every $v \in \Int(\cC)$, the following assertions hold.
	
	\begin{enumerate}
		\item  We have $N^{k}(v) \in \cC 
		\bss \{0\}$ and
		\begin{equation*}\label{eqn-||}
			\phi^n(v)  \sim_{n \to \infty} C_vn^{k} \cdot N^{k}(v).
		\end{equation*}
		for some $C_v > 0$.
		\item For every linear form $\chi : V \to \R$ such that $\chi(\cC \bss \{0\}) > 0$, we have
		\begin{equation*}\label{eqn-chi}
			\chi(\phi^n(v)) \sim_{n \to \infty} C'_vn^{k}
		\end{equation*}
		for some $C_v' > 0$.
	\end{enumerate}
	
\end{lem}

\begin{proof} 
	We can assume $\phi \ne \Id$. 
	Then $\ker N^k \ne V$, and
	for every $w \in V \bss \ker N^k$,
	developing $\phi^n(w) = (\Id_V + N)^n(w)$ shows that
	$$\phi^n(w)/n^k \sim_{n \to \infty} C_w N^k(w) $$ 
	for some $C_w > 0$.
	If moreover $w \in \cC$, then $\phi(\cC) \subset \cC$ and $\cC$ being closed, imply
	$N^k(w) \in \cC$.
	Assume the contrary that there exists some $x \in \Int(\cC)$ such that $N^k(x) = 0$.
	Then there exists some $\gep \in V$ such that
	$$x \pm \gep \in \cC \text{ and }  N^k(x \pm \gep) \ne 0.$$
	As $x \pm \gep \in \cC$ and $\phi(\cC) \subset \cC$, both $N^k(x \pm \gep) = \pm N^k(\gep)$
	are limits of elements in $\cC$, which contradicts the assumptions that $\cC$ is closed and salient. 
	This proves (1).
	
	Since $N^k(v) \in \cC \bss \{0\}$, we have $\chi(N^k(v)) > 0$.
	Thus (2) follows from (1).
\end{proof}

\begin{proof}[Proof of Lemma~\ref{lem-kprod}]
	Assume that $k(f_X) = \infty$ or $k(f_Y) = \infty$.
	Then Lemma~\ref{lem-kprod} follows from the product formula of the first dynamical degree (\cite[Theorem 1.1]{DNT}
	together with Lemma~\ref{lem:equiv_dyn}.
	
	Assume that both $k(f_X)$ and $k(f_Y)$ are finite.
	By Lemma~\ref{lem:equiv_dyn}, up to replacing $f_X$ and $f_Y$ by a common positive power,
	we can assume
	$$f_X^* : H^{1,1}(X,\R) \cto, \ \ f_Y^* : H^{1,1}(Y,\R)\cto, \ \ \text{ and } \ \ (f_X \times f_Y)^* : H^{1,1}(X  \times Y,\R)  \cto$$ 
	are unipotent. Fix K\"ahler classes $\go_X \in H^{1,1}(X,\R)$ and $\go_Y \in H^{1,1}(Y,\R)$. 
	Let $p_X : X \times Y \to X$ and $p_Y : X \times Y \to Y$ be the projections.
	Applying Lemma~\ref{lem-BPFk} to $H^{1,1}(\bullet,\R)$ and the nef cone therein shows that 
	$k(f_X \times f_Y)$ (resp. $k(f_X)$ and $k(f_Y)$)
	is the polynomial growth rate of
	$$((f_X \times f_Y)^*)^n(p_X^* \go_X + p_Y^* \go_Y) = p_X^*(f_X^*)^n(\go_X) +p_Y^* (f_Y^*)^n(\go_Y)$$
	(resp. $(f_X^*)^n(\go_X)$ and $(f_Y^*)^n(\go_Y)$). 
	
	Hence $k(f_X \times f_Y) = \max \Set{k(f_X), k(f_Y) }$.
\end{proof}

\begin{proof}[Proof of Proposition~\ref{pro-birJB}]
	
	As in Lemma \ref{lem-invft}, up to replacing $X$
	by an equivariant desingularization of
	the graph of $\pi$, we can assume that $\pi$ is holomorphic.
	
	By Lemmas~\ref{lem-invft} and \ref{lem:equiv_dyn}, we have $k(f_X) = \infty$ if and only if $k(f_Y) = \infty$.
	Thus we can assume that both $f_X^* : {H^{1,1}(X)} \cto$ and $f_Y^* : {H^{1,1}(Y)} \cto$ are quasi-unipotent.
	Up to replacing $f_X$ and $f_Y$ by some positive iterations,
	we can assume that the above actions are both unipotent.
	
	Applying Lemma~\ref{lem-BPFk} to the nef cone
	in $H^{1,1}(X,\R)$, we see that
	for every pair of K\"ahler classes $\omega$, $\eta$ on $X$, we have
	\begin{equation}\label{rateX}
		(f_X^*)^n (\omega) \cdot \eta^{\dim\, X -1}   \sim_{n \to \infty} C n^{k(f_X)}
	\end{equation}
	for some $C >0$.
	Similarly,  for every pair of K\"ahler classes $\go'$, $\eta'$ on $Y$, we have
	\begin{equation}\label{rateY}
		(f_X^*)^n (\pi^*\go')\cdot \pi^*\eta'^{\dim X -1}
		=\deg(\pi) \cdot (f_Y^*)^n (\go') \cdot \eta'^{\dim Y -1}  \sim_{n \to \infty} C' n^{k(f_Y)}
	\end{equation}
	for some $C' > 0$.
	Since the classes $\go,\eta, \pi^*\go',\pi^*\eta'$ are all nef and big,
	with the notation of Lemma~\ref{lem:nef_compar} we have
	$$c_1 \pi^*\go' \le \omega \le c_2 \pi^*\go' \ \text{ and }  \ c_3 \pi^*\eta' \le \eta \le c_4 \pi^*\eta'$$
	for some positive real numbers $c_i$.
	It follows from  Lemma~\ref{lem:nef_compar}
	that the growth rates of~\eqref{rateX} and~\eqref{rateY} have the same polynomial order.
	Hence $k(f_X) = k(f_Y)$.
\end{proof}

\ssec{Bounding the polynomial log-volume growth}\hfill

{\it From now on until the end of Section~\ref{sec-plov}, we assume that
$$f^* : H^{1,1}(X) \cto \text{ is unipotent, }$$ 
unless otherwise specified.}

For every $\gaa \in H^{1,1}(X,\R)$,
recall that 
$$\Delta_n(f, \gaa)   \cnec \sum_{i = 0}^n (f^i)^*\go \in H^{1,1}(X,\R).$$
The following lemma shows that $\Delta_n(f, \gaa)$ has polynomial expressions in $n$ for both ranges
$n \in \Z_{\ge 0}$ and $n \in \Z_{\le 0}$ (but these two polynomials are usually different).

\begin{lem}\label{lem-gD+-} 
	We have
	\begin{equation}
		\Delta_n(f, \gaa) = 
		\begin{cases}
			\gD^+_n(f,\gaa)  & \text{ if } n \ge 0 \\
			-\gD^+_{n-1}(f,\gaa) & \text{ if } n \le 0,
		\end{cases}
	\end{equation}
	where
	$$\gD^+_n(f,\gaa) \cnec \sum_{j=0}^{k(f)}{n + 1\choose j+1} N^j\gaa.$$
\end{lem}

\begin{proof}
	By definition of $k \cnec k(f)$, for every $\gaa \in H^{1,1}(X,\R)$ and every $n \in \Z$:
	\begin{equation}\label{eqn-abel}
		\Delta_n(f, \gaa)  =  \sum_{i = 0}^n(f^i)^*(\gaa) = \sum_{i = 0}^n\sum_{j=0}^{k}{i\choose j} N^j\gaa = \sum_{j=0}^{k}\sum_{i = 0}^n{i\choose j} N^j\gaa.
	\end{equation}
	If $n \ge 0$, then 
	$$\sum_{i = 0}^n{i\choose j} = {n + 1\choose j+1}$$
	by the hockey-stick identity. Similarly,
	if $n \le 0$, then 
	$$\sum_{i = 0}^n{i\choose j} = (-1)^j\sum_{i = 0}^n{j-i - 1\choose j} = (-1)^j {j-n \choose j+1} =  
	- {n \choose j+1}.$$
	Hence Lemma~\ref{lem-gD+-} follows.
\end{proof}

The following lemma will be useful to prove results 
on lower bounds of $\Plov(f)$ in this paper.
In the projective setting, this lemma was due to Keeler~\cite[Lemma 6.5(4)]{Ke}
and was applied in his work to prove his lower bound.

\begin{lem}\label{lem-sdP}
	For every integer $i \in [1,d]$, let
	$$P_{f,\go,i}(n) \cnec \gD^+_n(f, \go)^i \go^{d-i} = \(\sum_{j=0}^{k}{n + 1\choose j+1} N^j\go\)^i\go^{d-i},$$
	which is a polynomial in $n$ of degree $\deg_{n} P_{f,\go,i}(n)$. 
	Then we have
	$$\deg_{n} P_{f,\go,i}(n) > \deg_{n} P_{f,\go,i-1}(n).$$
\end{lem}
\begin{proof}
	
	For every non-negative integer $m$, define 
	$$P_{f,\go,i,m}(n) \cnec \gD^+_n(f, \go)^{i-1}\cdot (f^m)^*\go \cdot \go^{d-i} = \(\sum_{j=0}^{k}{n + 1\choose j+1} N^j\go\)^{i-1} \cdot (\Id + N)^m(\go) \cdot \go^{d-i},$$
	which is a polynomial in $n$ of degree $\deg_{n} P_{f,\go,i,m}(n)$.
	Note that since both $\go$ and $(f^m)^*\go$ are K\"ahler, we have
	$$C_1 \go \le (f^m)^*\go \le C_2 \go$$
	for some $C_1,C_2 > 0$, so
	$$C_1 P_{f,\go,i,0}(n) \le P_{f,\go,i,m}(n) \le C_2 P_{f,\go,i,0}(n)$$
	by Lemma~\ref{lem:nef_compar} and therefore
	$$\deg_n P_{f,\go,i,m}(n) = \deg_n P_{f,\go,i,0}(n) = \deg_n P_{f,\go,i-1}(n).$$
	In particular, $\deg_n P_{f,\go,i,m}(n)$ is independent of $m$.
	
	For every $m$, 
	since $P_{f,\go,i,m}(n) > 0$, 
	the leading coefficient $C_{f,\go,i}(m)$ of the polynomial $P_{f,\go,i,m}$
	satisfies $C_{f,\go,i}(m) > 0$.
	As $C_{f,\go,i}(m)$ is a polynomial in $m$ (because $N$ is nilpotent),
	the minimum of 
	$$	\left\{C_{f,\go,i}(m) \mid m \in \Z_{\ge 0} \right\}$$
	exists;
	let $\ell \in \Z_{\ge 0}$ such that $C_{f,\go,i}(\ell)$ is the minimum.
	
	By construction,  we have
	$$\lim_{n \to \infty} \frac{P_{f,\go,i,m}(n)}{P_{f,\go,i,\ell}(n)} = \frac{C_{f,\go,i}(m)}{C_{f,\go,i}(\ell)} \ge 1$$
	for every $m \in \Z$. So
	$$\frac{P_{f,\go,i}(n)}{P_{f,\go,i,\ell}(n)} 
	= \sum_{m = 0}^n\frac{P_{f,\go,i,m}(n)}{P_{f,\go,i,\ell}(n)} \succeq_{n \to \infty} \gamma,$$
	for any $\gamma>0$,
	which shows that $\deg_n P_{f,\go,i,\ell}(n) < \deg_n P_{f,\go,i}(n)$.
	Hence
	$$\deg_n P_{f,\go,i-1}(n) = \deg_n P_{f,\go,i,\ell}(n) < \deg_n P_{f,\go,i}(n).$$
\end{proof}

\begin{lem} \label{lem35}
	$\Plov(f)$ is equal to the degree of the polynomial
	$$n \mapsto P_{f,\go}(n) \cnec P_{f,\go, d}(n) = \gD^+_n(f, \go)^d  =  \(\sum_{j=0}^{k(f)}{n + 1\choose j+1} N^j\go\)^d$$
	for any K\"ahler class $\go$ on $X$.
	As a consequence, $\Plov(f)$ is a positive integer satisfying
	\begin{equation}\label{eqn-PlovNil}
		k(f) + d \, \le \, \Plov(f) \, \le \, d +
		\max \left\{\sum_{j = 1}^d i_j  \ \bigg{|} \
		i_j \in \Z_{\ge 0}, (N^{i_1}\go)\cdots(N^{i_d}\go) \ne 0 \right\},
	\end{equation}
	where $d = \dim X$. Also, the limit superior
	defining $\Plov(f,\go)$
	in Lemma~\ref{lem-sminvVol} 
	for any nef and big class $\go$
	is actually a limit.
\end{lem}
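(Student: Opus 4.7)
The plan is to identify $\Plov(f)$ with $\deg P_{f,\go}$ and then read off the two bounds from the explicit expansion~\eqref{eqn-abel}. For the identification, I would first observe that by~\eqref{eqn-abel} together with the nilpotence of $N$, only finitely many terms in the sum contribute, so $P_{f,\go}(n) = \Delta_n(f,\go)^d$ is genuinely a polynomial in $n$ with values in $H^{d,d}(X,\R) \cong \R$. Moreover, since each $(f^i)^*\go$ is K\"ahler, the class $\Delta_n(f,\go)$ is itself K\"ahler for every $n \ge 1$, so $P_{f,\go}(n) > 0$; hence the leading coefficient of $P_{f,\go}$ is strictly positive and
\[
\Plov(f) \;=\; \limsup_{n \to \infty} \frac{\log P_{f,\go}(n)}{\log n} \;=\; \deg P_{f,\go}.
\]

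For the upper bound I would simply expand
\[
P_{f,\go}(n) = \sum_{0 \le i_1,\ldots,i_d \le k(f)} \binom{n}{i_1+1}\cdots\binom{n}{i_d+1}\,(N^{i_1}\go)\cdots(N^{i_d}\go).
\]
Each coefficient polynomial $\binom{n}{i_1+1}\cdots\binom{n}{i_d+1}$ has degree $d + \sum_{j=1}^d i_j$ in $n$, while multi-indices with $(N^{i_1}\go)\cdots(N^{i_d}\go) = 0$ contribute nothing; this yields the stated upper bound on $\deg P_{f,\go}$ at once.

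For the lower bound I would iterate Lemma~\ref{lem-sdP} to obtain
\[
\deg P_{f,\go} = \deg P_{f,\go,d} \;\ge\; \deg P_{f,\go,1} + (d-1),
\]
so it suffices to show $\deg P_{f,\go,1} = k(f)+1$. Since $P_{f,\go,1}(n) = \sum_j \binom{n}{j+1}(N^j\go \cdot \go^{d-1})$, this reduces to showing $(N^{k(f)}\go)\cdot\go^{d-1} \ne 0$, which is the step I expect to be the main obstacle: \emph{a priori} one only knows $N^{k(f)} \ne 0$ as an operator on $H^{1,1}(X,\R)$, and it is not obvious why $\go$ and $\go^{d-1}$ should not accidentally land in the kernel of the relevant intersection pairing. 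To circumvent this, I would apply Lemma~\ref{lem-BPFk} to $\cC = \Nef(X) \subset H^{1,1}(X,\R)$ (a closed, salient, convex cone with nonempty interior, preserved by the unipotent operator $f^*$), with the interior vector $v = \go$ and the linear functional $\phi(\gaa) \cnec \gaa \cdot \go^{d-1}$, which is strictly positive on $\Nef(X)\setminus\{0\}$ by the Hodge--Riemann bilinear relations. Lemma~\ref{lem-BPFk} then yields $(f^n)^*\go \cdot \go^{d-1} \sim C n^{k(f)}$ with $C > 0$, and comparing with the expansion $(f^n)^*\go = \sum_j \binom{n}{j} N^j\go$ forces $(N^{k(f)}\go)\cdot\go^{d-1} \ne 0$. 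Combined with the iterated Lemma~\ref{lem-sdP}, this gives $\Plov(f) = \deg P_{f,\go} \ge k(f) + d \ge 1$, so $\Plov(f)$ is in particular a positive integer.
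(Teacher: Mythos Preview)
Your proof is correct and follows essentially the same route as the paper: expand via~\eqref{eqn-abel} for the upper bound, then iterate Lemma~\ref{lem-sdP} and show $(N^{k(f)}\go)\cdot\go^{d-1}\ne 0$ for the lower bound. The only cosmetic difference is that where you invoke Lemma~\ref{lem-BPFk} to obtain this nonvanishing, the paper observes directly that $N^{k(f)}\go = k(f)!\cdot\lim_{m\to\infty}(f^*)^m\go/m^{k(f)}$ is nef and nonzero, hence pairs positively with $\go^{d-1}$; this is the same argument unpackaged.
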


\begin{proof} 
	The first claim and 
	the last statement about the limit superior are clear by Lemma~\ref{lem-gD+-} and the definition of $\Plov(f)$. 
	Then, the upper bound of $\Plov(f)$ is clear by the equation (\ref{eqn-abel}). 
	
	For the lower bound, by Lemma~\ref{lem-sdP} with the notations therein, we have
	$$\Plov(f) = \deg_{n} P_{f,\go,d}(n) > \deg_{n} P_{f,\go,d-1}(n) > \cdots > \deg_{n} P_{f,\go,1}(n).$$
	As
	$$N^{k(f)}\go = k(f)! \cdot \lim_{m \to \infty} \frac{(f^*)^m(\go)}{m^{k(f)}}$$
	is nef and nonzero by definition of $k(f)$, we have $(N^{k(f)}\go) \cdot \go^{d-1} \ne 0$.
	So $\deg_{n} P_{f,\go,1}(n) \ge k(f)+1$, which shows that
	$\Plov(f) \ge k(f) + d$.
\end{proof}

\begin{remark}
	
	Based on $\Plov(f,\go) = \Plov(f,(f^*)^i\go)$ for any integer $i$,
	the last statement in Lemma~\ref{lem35} regarding the limit superior 
	still holds if $f^* : H^{1,1}(X) \cto$ is quasi-unipotent.
	We do not know whether it continues to hold without the quasi-unipotence assumption.
\end{remark}

Now we can prove that the polynomial logarithmic volume growth is also compatible with product.

\begin{lem}\label{lem-pruduct}
	Let $X_i$ ($i=1$, $2$) be compact K\"ahler manifolds and let
	$f_i \in \Aut(X_i)$ 
	(without assuming that $f_i^* : H^{1,1}(X_i) \cto $ is unipotent).
	Then
	$$\Plov (f_1 \times f_2) = \Plov (f_1) + \Plov (f_2).$$
\end{lem}

\begin{proof}
	
	Let $\go_i$ be a K\"ahler metric on $X_i$ and
	let $\pr_i : X_1 \times X_2 \to X_i$ be the projection to the $i$-th factor.
	Then
	\begin{equation}\label{eqn-multVol}
		\Vol_{\pr_1^*\go_1 + \pr_2^*\go_2}(\Gamma_{f_1 \times f_2}(n)) =
		\Vol_{\go_1}(\Gamma_{f_1}(n))\Vol_{\go_2}(\Gamma_{f_2}(n)).
	\end{equation}
	which proves Lemma~\ref{lem-pruduct} in the case where $\Plov (f_1) = \infty$ or $\Plov (f_2) = \infty$.
	
	Assume that both $\Plov (f_1) $ and $\Plov (f_2)$ are finite,
	then $\Plov (f_1 \times f_2)$ is also finite by the equivalence $(1) \Leftrightarrow (6)$ in
	Lemma~\ref{lem:equiv_dyn} and the K\"unneth formula.
	To prove Lemma~\ref{lem-pruduct},
	by Lemma~\ref{lem-invfini} we can replace $f_1$ and $f_2$ by some common power. 
	Thus by Lemma~\ref{lem:equiv_dyn} again, we can assume that
	the actions of $f_1$, $f_2$, and $f_1 \times f_2$ acting on the cohomology rings 
	of $X_1$, $X_2$, and $X_1 \times X_2$ respectively are unipotent.
	It follows from Lemma~\ref{lem35} that the 
	limits superior in the definitions of $\Plov(f_1)$, $\Plov(f_2)$, and $\Plov(f_1 \times f_2)$ are all limits.
	Hence Lemma~\ref{lem-pruduct} for finite $\Plov (f_1) $ and $\Plov (f_2)$ 
	follows again from~\eqref{eqn-multVol}.
\end{proof}

\begin{cor}\label{cor-k=0}
	Let $X$ be a compact K\"ahler manifold of dimension $d$ and 
	let $f \in \Aut (X)$ be a zero entropy automorphism. Then
	$\Plov(f) = d$ if and only if $k(f) = 0$.
\end{cor}

\begin{proof}
	
	Since $f$ has zero entropy,
	by Lemmas~\ref{lem-invfini} and~\ref{lem:equiv_dyn}
	we can assume that $f^* : H^{1,1}(X)$ is unipotent.
	Thus $k(f) = 0$ implies $N = 0$, and  $\Plov(f) = d$ by
	Lemma~\ref{lem35}.
	Again by Lemma~\ref{lem35},
	$\Plov(f) = d$ implies $k(f) = 0$.
\end{proof}

Another consequence of Lemma~\ref{lem35} is the following.

\begin{cor}\label{cor-parity}
	$\Plov(f)$ has the same parity as $d = \dim X$.
\end{cor}

\begin{proof}
	Since $\Delta_n(f, \go)$ is K\"ahler, we have $\Delta_n(f, \go)^d > 0$ for all $n \in \Z$.
	So by Lemma~\ref{lem-gD+-}, we have
	\begin{equation}
		\begin{cases}
			\gD^+_{n}(f,\go)^d > 0 & \text{ for } n \gg 0 \\
			(-1)^d\gD^+_{n-1}(f,\go)^d > 0 & \text{ for } n \ll 0.
		\end{cases}
	\end{equation}
	It follows that the degree of the polynomial $n \mapsto \gD^+_{n}(f,\go)^d$, which is also $\Plov(f)$ by Lemma~\ref{lem35},
	has the same parity as $d$.
\end{proof}

The following lemma provides another way to compute $\Plov(f)$, and turns out to be useful.
Define 
$$ \Delta'_n(f, \go) \cnec   \sum_{i = 0}^n \((f^i)^* + (f^{-i})^*\)(\go).$$

\begin{lem}\label{lem-plovdbend}
	$\Plov(f)$ is also the degree of the polynomial
	$$n \mapsto \Delta'_n(f, \go)^d = \(\go + \sum_{i = -n}^n (f^i)^*\go\)^d.$$
\end{lem}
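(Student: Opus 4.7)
My plan is to reduce the computation of $\deg_n \Delta'_n(f,\go)^d$ to that of $\deg_n \Delta_{2n}(f,\go)^d$, the latter being equal to $\Plov(f)$ by Lemma~\ref{lem35}.

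The first step is to establish the key reindexing identity
$$\Delta'_n(f,\go) \;=\; \go + (f^{-n})^*\Delta_{2n}(f,\go),$$
obtained by the substitution $j = i+n$ in $\sum_{i=-n}^n (f^i)^*\go$, which gives $(f^{-n})^*\sum_{j=0}^{2n}(f^j)^*\go$. Note that $(f^{-n})^*\Delta_{2n}(f,\go)$ is nef (a sum of K\"ahler pullbacks by an automorphism), so $\Delta'_n(f,\go)$ is displayed as a sum of two nef classes.

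Next, I expand via the binomial theorem
$$\Delta'_n(f,\go)^d \;=\; \sum_{j=0}^d \binom{d}{j}\, \go^{d-j}\cdot\bigl((f^{-n})^*\Delta_{2n}(f,\go)\bigr)^j,$$
and exploit that $f^n$ is an automorphism: pullback by $(f^n)^*$ is a ring homomorphism on cohomology preserving top-degree integrals, hence term-by-term
$$\go^{d-j}\cdot\bigl((f^{-n})^*\Delta_{2n}(f,\go)\bigr)^j \;=\; \bigl((f^n)^*\go\bigr)^{d-j}\cdot \Delta_{2n}(f,\go)^j.$$
Since $\Delta_{2n}(f,\go) - (f^n)^*\go = \sum_{0\le i\le 2n,\; i\ne n}(f^i)^*\go$ is nef, Lemma~\ref{lem:nef_compar} gives the two-sided bound
$$0 \;\le\; \bigl((f^n)^*\go\bigr)^{d-j}\cdot \Delta_{2n}(f,\go)^j \;\le\; \Delta_{2n}(f,\go)^d$$
for every $0\le j\le d$. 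Keeping the $j=d$ term on the lower side and summing the $2^d$ uniform upper bounds yields the sandwich
$$\Delta_{2n}(f,\go)^d \;\le\; \Delta'_n(f,\go)^d \;\le\; 2^d\,\Delta_{2n}(f,\go)^d.$$

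To finish, I note that $n\mapsto \Delta'_n(f,\go)^d$ is itself a polynomial in $n$ (by the same Jordan-form expansion of $(f^i)^*\go$ used in the proof of Lemma~\ref{lem35}, summed over $i\in[-n,n]$). Being trapped between two polynomials of degree $\Plov(f)$ with positive leading coefficients, it must have degree exactly $\Plov(f)$, which proves the lemma. The only point that seems potentially delicate is the term-by-term identity coming from the $f^n$-equivariance of intersection products, but this is standard for automorphisms of compact complex manifolds and presents no genuine obstacle.
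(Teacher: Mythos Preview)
Your proof is correct and follows essentially the same approach as the paper: reindex $\sum_{i=-n}^{n}(f^i)^*\go$ via $(f^{-n})^*$, use that pullback by an automorphism preserves top intersection numbers, and sandwich $\Delta'_n(f,\go)^d$ between $\Delta_{2n}(f,\go)^d$ and $2^d\,\Delta_{2n}(f,\go)^d$. The paper's version is marginally more direct in that it pulls back the entire class at once to write $\Delta'_n(f,\go)^d = \bigl((f^n)^*\go + \Delta_{2n}(f,\go)\bigr)^d$ and then applies Lemma~\ref{lem:nef_compar} to the chain $\Delta_{2n}(f,\go) \le (f^n)^*\go + \Delta_{2n}(f,\go) \le 2\Delta_{2n}(f,\go)$, avoiding your binomial expansion; but the content is the same.
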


\begin{proof} 
	Recall that $\Plov(f)$ is defined as the polynomial degree of
	$n \mapsto \(\sum_{i = 0}^n (f^i)^*\go\)^d$, which is also the
	polynomial degree of
	$n \mapsto \(\sum_{i = 0}^{2n} (f^i)^*\go\)^d$ as well as $n \mapsto \(\sum_{i = 0}^{2n} 2(f^i)^*\go\)^d$. Hence
	Lemma~\ref{lem-plovdbend} follows from
	$$\(\go + \sum_{i = -n}^n (f^i)^*\go\)^d
	= \((f^{-n})^*\((f^{n})^*\go +\sum_{i = 0}^{2n} (f^i)^*\go\)\)^d
	=\((f^{n})^*\go +\sum_{i = 0}^{2n} (f^i)^*\go\)^d
	$$
	and
	$$\(\sum_{i=0}^{2n} (f^i)^*\omega\)^d \le \((f^n)^* \go + \sum_{i=0}^{2n} (f^i)^*\omega\)^d
	\le \(\sum_{i=0}
	^{2n} 2(f^i)^*\omega\)^d.$$
\end{proof}

As $f^* : H^{1,1}(X) \circlearrowleft$ is unipotent, $(f^{-1})^* : H^{1,1}(X) \circlearrowleft$ is also unipotent.
Set
$$ N' \cnec (f^{-1})^* - \Id :  H^{1,1}(X) \to H^{1,1}(X)$$
and let
$$N_m \cnec N^m + N'^m.$$
We have an analogous statement of Lemma~\ref{lem35} with $N^m$ replaced by $N_m$.

\begin{lem}\label{lem-ubN_i}
	We have
	$$\Plov(f) \le
	d + \max \left\{\sum_{j = 1}^d i_j  \ \bigg{|} \
	i_j \in \Z_{\ge 0}, (N_{i_1}\go)\cdots(N_{i_d}\go) \ne 0 \right\}.$$
\end{lem}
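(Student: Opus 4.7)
The plan is to mimic the proof of Lemma~\ref{lem35}, but to start from the alternative formula for $\Plov(f)$ given by Lemma~\ref{lem-plovdbend} rather than from the original definition. The key observation is that since $f^* \colon H^{1,1}(X) \cto$ is assumed unipotent, so is $(f^{-1})^*$, and hence $N' \cnec (f^{-1})^* - \Id$ is nilpotent with $N'^{k+1} = 0$ (indeed $f^*$ and $(f^{-1})^*$ share the same Jordan block sizes).

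First, I expand
$$(f^i)^* = (\Id + N)^i = \sum_{j=0}^k \binom{i}{j} N^j, \qquad (f^{-i})^* = (\Id + N')^i = \sum_{j=0}^k \binom{i}{j} N'^j,$$
and then sum over $i=0,\ldots,n$. The hockey-stick identity yields polynomials $c_j(n)$ in $n$ of degree exactly $j+1$ such that
$$\Delta'_n(f,\go) = \sum_{i=0}^n \bigl((f^i)^* + (f^{-i})^*\bigr)(\go) = \sum_{j=0}^k c_j(n)\, N_j \go,$$
where $N_j = N^j + N'^j$ as in the statement (note $N_0 \go = 2\go$).

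Second, I raise this expression to the $d$-th power. Expanding multilinearly in $H^{d,d}(X,\R)\cong \R$ gives
$$\Delta'_n(f,\go)^d = \sum_{(i_1,\ldots,i_d)\in[0,k]^d} c_{i_1}(n)\cdots c_{i_d}(n)\,\bigl(N_{i_1}\go\bigr)\cdots\bigl(N_{i_d}\go\bigr).$$
For each tuple, the polynomial factor $c_{i_1}(n)\cdots c_{i_d}(n)$ has degree $\sum_{\ell=1}^d(i_\ell+1) = d+\sum_{\ell} i_\ell$, while the scalar factor $(N_{i_1}\go)\cdots(N_{i_d}\go)\in\R$ either vanishes or is nonzero; in the former case the summand contributes nothing.

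By Lemma~\ref{lem-plovdbend}, $\Plov(f) = \deg_n \Delta'_n(f,\go)^d$. Since every nonzero summand has polynomial degree at most $d + \max\bigl\{\sum_\ell i_\ell : (N_{i_1}\go)\cdots(N_{i_d}\go)\ne 0\bigr\}$, the same bound holds for the sum, giving the inequality claimed. There is no real obstacle here, as the only subtlety (possible cancellations among top-degree terms corresponding to different tuples with the same $\sum i_\ell$) can only lower the degree and is therefore harmless for an upper bound.
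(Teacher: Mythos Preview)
Your proof is correct and follows essentially the same route as the paper: both invoke Lemma~\ref{lem-plovdbend}, expand $\Delta'_n(f,\go)$ as $\sum_{j=0}^{k} c_j(n)\,N_j\go$ with $\deg c_j = j+1$ (the paper writes the coefficients explicitly as binomial coefficients, you keep them abstract), and then read off the degree bound from the multinomial expansion of $\Delta'_n(f,\go)^d$. Your remark on cancellations is a nice explicit acknowledgment of why only an upper bound follows.
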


\begin{proof}
	Lemma~\ref{lem-ubN_i} follows from Lemma~\ref{lem-plovdbend} together with
	$$ \Delta'_n(f, \go)
	= \sum_{i = 0}^n (f^i)^*\go + \sum_{i = -n}^0 (f^i)^*\go
	= \sum_{i = 0}^{k(f)} \binom{n + 1}{i+1}(N^i + N'^i)(\go)
	= \sum_{i = 0}^{k(f)} \binom{n + 1}{i+1} N_i(\go).$$
\end{proof}

\begin{lem}\label{lem-sdPbis}
	For every integer $i \in [0,d]$, let
	$$P'_{f,\go,i}(n) \cnec \Delta'_n(f, \go)^i \go^{d-i} = \(\sum_{j=0}^{k}{n + 1\choose j+1} N_j\go\)^i\go^{d-i},$$
	which is a polynomial in $n$. 
	Then we have 
	$$\deg P'_{f,\go,i} > \deg P'_{f,\go,i-1}.$$
\end{lem}

\begin{proof}
	As in the proof of Lemma~\ref{lem-sdP}, for every non-negative integer $m$, we define
	$$P'_{f,\go,i,m}(n) \cnec \Delta'_n(f, \go)^{i-1}\cdot( (f^m)^*\go + (f^{-m})^*\go) \cdot \go^{d-i}.$$
	The same argument in Lemma~\ref{lem-sdP} shows that
	$$\deg_n P'_{f,\go,i,m}(n) = \deg_n P'_{f,\go,i,0}(n) = \deg_n P'_{f,\go,i-1}(n)$$
	for every $m$, and there exists $\ell \in \Z_{>0}$ such that
	the leading coefficient $C_{f,\go,i}(\ell) > 0$ of $P'_{f,\go,i,\ell}$ is minimum 
	among all $\ell \in \Z_{>0}$.
	Since $\Delta'_n(f, \go)^{i-1}\cdot \go \cdot \go^{d-i} > 0$ (because $\go$ is K\"ahler),
	it follows that
	$$\frac{P'_{f,\go,i}(n)}{P'_{f,\go,i,\ell}(n)} =  
	\sum_{m = 0}^n\frac{ P'_{f,\go,i,m}(n)}{P'_{f,\go,i,\ell}(n)} 
	\succeq_{n \to \infty} \gamma,$$
	for any $\gamma>0$, and we conclude the proof as in Lemma~\ref{lem-sdP}.
\end{proof}

Let $\go \in H^{1,1}(X)$. For all integer $0 \le p \le d$, consider the following
polynomial in $n$ with coefficients in $H^{p,p}(X)$:
$$Q_{f,\go,p} : n \mapsto  \((f^n)^*\go + (f^{-n})^*\go\)^p = \( \sum_{i = 0}^k \binom{n}{i}N_i(\go) \)^p.$$
{\it Let $\gl_p(f,\go)$ denote the
	polynomial degree of $Q_{f,\go,p}(n)$.}

\begin{remark}
	Note that for any product $\gO \in H^{d-p,d-p}(X)$
	of $d-p$ K\"ahler classes, $\gl_p(f,\go)$ is also the
	polynomial degree of
	$$n \mapsto  \gO \cdot \((f^n)^*\go + (f^{-n})^*\go\)^p.$$
	The same argument proving Lemma~\ref{lem-sminvVol}
	shows that the polynomial degree
	$\gl_p(f,\go)$ is independent of the choice of $\go$ whenever $\go$ is nef and big.
\end{remark}

We will use the next lemma in the proof of Lemma~\ref{lem-vank-2d4}.

\begin{lem}\label{lem-ubdpol} 
	For every integer $p$, we have
	\begin{equation*}
		\gl_p(f,\go) \le \max\,
		\left\{r_i \in \Z \ \ \big{|} \ \ \| (f^n)^* \cto H^{i,i}(X) \| \sim_{n \to \infty} C_in^{r_i}
		\text{ for some } C_i > 0, 0 \le i \le p \right\} .
	\end{equation*}
	In particular,
	$$\gl_p(f,\go) \le k(f) \left\lfloor \frac{d}{2}  \right\rfloor \le \frac{k(f)d}{2}  \le d(d-1).$$
\end{lem}
\begin{proof}
	The first statement follows from
	$$
	\((f^n)^*\go + (f^{-n})^*\go\)^p
	= \sum_{j=0}^p \binom{p}{j} (f^n)^*\go^{j} \cdot (f^{-n})^*\go^{p-j}
	= \sum_{j=0}^p \binom{p}{j} \((f^{2n})^*\go^{j} \)\cdot \go^{p-j}.
	$$
	For the last statement, the first inequality follows from~\cite[Proposition 5.8]{Dinh-bis}
	and the last inequality from Theorem~\ref{JBbound}.
\end{proof}

\section{Quasi-nef sequences and dynamical filtrations} \label{sec-filtr}

\ssec{Dynamical filtrations and proof of the upper bound~\eqref{ub-DLOZgk} in Theorem \ref{JBbound}}
\hfill

First we recall the definitions and basic properties of quasi-nef sequences
and dynamical filtrations.
We then prove some useful lemmas,
and finally the optimal upper bound~\eqref{ub-DLOZgk}  in Theorem \ref{JBbound}
(see Corollary~\ref{cor-JBboundkd}).

Let $X$ be a compact K\"ahler manifold of dimension $d \ge 1$.
For every $\gaa \in H^{i,i}(X,\R)$, if $\gaa \cdot H^{1,1}(X,\R)^{d-i} = 0$, we write
$$\gaa \equiv 0$$
as in Notations.
Let $\cK^i(X) \subset H^{i,i}(X,\R)$ be
the closed convex cone generated by classes of smooth positive $(i,i)$-forms.
We have $\cK^1(X) = \Nef(X)$, which is the nef cone of $X$.
For every $\gaa \in \cK^i(X)$, define
$$\Nef(\gaa) \cnec  \ol{\gaa \cdot \Nef(X)} \subset H^{i+1,i+1}(X,\R).$$
As $\Nef(X)$ is a convex cone, so is $\Nef(\gaa)$. Since $\Nef(\gaa) \subset \cK^{i+1}(X)$
and $\cK^{i+1}(X)$ is salient,
$\Nef(\gaa)$ is a closed salient cone.

\begin{construction}[Quasi-nef sequence~\cite{Zhang3}]
	
	Let  $f \in \Aut(X)$ be an automorphism of $X$ such that $f^* : H^{1,1}(X) \cto$ is unipotent.
	A \emph{quasi-nef sequence} (with respect to $f$) is a sequence
	$$M_1,\, \ldots, \, M_d \, \in \, H^{1,1}(X,\R)$$
	constructed recursively as follows.
	Suppose that $M_1,\ldots,M_i \in H^{1,1}(X,\R)$ are constructed,
	then $M_{i+1} \in H^{1,1}(X,\R)$ is an element such that
	\begin{itemize}
		\item $f^*(M_1\cdots M_{i+1}) = M_1\cdots M_{i+1} \ne 0$,
		\item $M_1\cdots M_i M_{i+1} \in \Nef(M_1\cdots M_{i})$.
	\end{itemize}
	Since $f^* : H^{1,1}(X) \cto$ is unipotent,
	the existence of $M_{i+1}$ follows from
	Birkhoff's Perron--Frobenius theorem~\cite{Bi} applied to $\Nef(M_1\cdots M_{i})$. 
	See also \cite[Theorem 1.1]{KOZ} for a generalization.
	We set $L_0 \cnec 1 \in H^0(X,\R)$ and define $L_i \cnec M_1\cdots M_i \in H^{i,i}(X,\R)$.
	
	Note that $M_1,\, \ldots, \, M_d \, \in \, H^{1,1}(X,\R)$ is also a quasi-nef sequence with respect to $f^{-1}$.  
	\qed
\end{construction}

Given a quasi-nef sequence $M_1,\ldots,M_d \in H^{1,1}(X,\R)$
with respect to an automorphism $f \in \Aut(X)$ such that $f^* : H^{1,1}(X) \cto$ is unipotent, define
$$F_i \cnec \big\{\gaa \in H^{1,1}(X,\R) \mid L_i\gaa \equiv 0 \big\}$$
and let $F'_i$ be the subspace of $F_i$ spanned by
$$\big\{\gaa \in F_i \mid L_{i-1} \gaa \equiv \gb \text{ for some } \gb \in \Nef(L_{i-1}) \big\}.$$
Recall from~\cite{DLOZ} that these subspaces form an $f^*$-stable filtration
\begin{equation}\label{filt-F'F}
	0 = F_0 \subset F'_1 \subset F_1 \subset \cdots \subset F'_{d-1} \subset F_{d-1} \subset F'_d = H^{1,1}(X,\R).
\end{equation}
We note that the filtration~\eqref{filt-F'F} 
\emph{depends on the choice} of 
a quasi-nef sequence $M_1,\ldots,M_d \in H^{1,1}(X,\R)$.
Here are some fundamental properties of these filtrations proven in~\cite{DLOZ}.

\begin{pro}[{\cite[Theorem 1.3]{DLOZ}}]\label{pro-filtr}
	\hfill
	\begin{enumerate}
		\item 
		We have $\dim (F'_i/F_{i-1}) \le 1$ and
		$$F'_i = \{ \gamma \in  F_i \mid L_{i-1}\gamma^2 \equiv 0\}.$$
		Moreover the following conditions are equivalent:
		\begin{itemize}
			\item[(i)]
			$F_{i-1} \ne F'_i$;
			\item[(ii)]
			$F'_i = F_{i-1} \oplus (\R \cdot M_i)$;
			\item[(iii)]
			$L_{i-1}M_i^2 = 0$.
		\end{itemize}
		
		\item There exist an integer $r \in [1,d-1]$
		and a strictly decreasing sequence of integers
		$$d-1 \ge s_1 > \cdots > s_r \ge 1$$
		such that for every K\"ahler class $\go \in H^{1,1}(X,\R)$ and every integer $j \in [1,r]$,
		$$(f^* - \Id)^{2j-1}\go \in F_{s_j} \bss F'_{s_j} \ \ \text{ and } \ \
		(f^* - \Id)^{2j}\go \in F'_{s_j} \bss F_{s_j - 1},$$
		and $(f^* - \Id)^{2r+1}\go = 0$. In particular, $(f^* - \Id)^{2r+1} = 0 \in \End(H^{1,1}(X,\R))$.
	\end{enumerate}
\end{pro}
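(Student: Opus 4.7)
The plan is to prove Part (1) via Hodge--Riemann bilinear relations for limits of products of K\"ahler classes, and to deduce Part (2) by combining Part (1) with the $f^*$-invariance $f^*L_j=L_j$ built into the quasi-nef construction together with the unipotency of $f^*$.

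\textbf{Part (1).} Fix a K\"ahler class $\omega$ and consider the symmetric bilinear form
\[
Q_i(\gaa,\gb) \cnec L_{i-1}\cdot\gaa\cdot\gb\cdot\omega^{d-i-1}
\]
on $H^{1,1}(X,\R)$. Since each $M_j$ arises from Birkhoff--Perron--Frobenius applied to the cone $\Nef(L_{j-1})$, an induction shows that $L_j$ lies in the closure of products of $j$ K\"ahler classes. The mixed Hodge--Riemann bilinear relations (Dinh--Nguyen in the K\"ahler case, Gromov in the projective case), passed to the limit, then yield that $Q_i$ has Lorentzian signature $(1,*)$ modulo its radical. Clearly $F_{i-1}\subseteq\ker Q_i$ because the condition $L_{i-1}\gaa\equiv 0$ kills $Q_i(\gaa,-)$. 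Using the signature, the induced form on $F_i/F_{i-1}$ has rank at most $1$, so the set of isotropic vectors of $Q_i|_{F_i}$ modulo $F_{i-1}$ forms a subspace of dimension at most one. Combining this with the defining condition $L_{i-1}\gaa\equiv\gb\in\Nef(L_{i-1})$ for generators of $F'_i$ (noting that $M_i$ itself is such a generator precisely when (iii) holds) yields simultaneously the characterization $F'_i=\{\gaa\in F_i: L_{i-1}\gaa^2\equiv 0\}$ and the bound $\dim(F'_i/F_{i-1})\le 1$. The equivalences (i)--(iii) then follow formally.

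\textbf{Part (2).} The heart of the argument consists of the two propagation lemmas for the nilpotent operator $N\cnec f^*-\Id$: \emph{(A)} $N(F_i)\subseteq F'_i$, and \emph{(B)} $N(F'_i)\subseteq F_{i-1}$. For (A), given $\gaa\in F_i$, I would expand
\[
L_{i-1}(N\gaa)^2 = L_{i-1}(f^*\gaa)^2 - 2L_{i-1}\gaa\cdot f^*\gaa + L_{i-1}\gaa^2,
\]
and use $f^*L_{i-1}=L_{i-1}$ together with the cohomological adjunction $f^*\gb\cdot\delta = \gb\cdot(f^{-1})^*\delta$ to rewrite $L_{i-1}(f^*\gaa)^2 = f^*(L_{i-1}\gaa^2)$ and simplify the cross term; pairing with a top-complement class and invoking $L_i\gaa\equiv 0$ plus the unipotency of $f^*$ shows $L_{i-1}(N\gaa)^2\equiv 0$, so $N\gaa\in F'_i$ by Part (1). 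For (B), decompose $F'_i=F_{i-1}\oplus\R M_i$ via Part (1): on $F_{i-1}$, (A) at level $i-1$ gives $NF_{i-1}\subseteq F'_{i-1}\subseteq F_{i-1}$; on $M_i$, the identity $L_{i-1}\cdot NM_i = f^*L_i - L_i = 0$ (using $f^*L_i=L_i$) shows $NM_i\in F_{i-1}$.

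Iterating (A) and (B) starting from $\omega\in F'_d$ produces the zigzag. Let $s_1$ be the minimal index such that $N\omega\in F_{s_1}$; then $N\omega\in F_{s_1}\setminus F'_{s_1}$, and (A) gives $N^2\omega\in F'_{s_1}$. A positivity argument using K\"ahlerness of $\omega$ and the sharp Hodge--Riemann pairing refines this to $N^2\omega\in F'_{s_1}\setminus F_{s_1-1}$. Applying $N$ once more, (B) drops the class into some $F_{s_2}$ with $s_2<s_1$, and the same argument gives $N^3\omega\in F_{s_2}\setminus F'_{s_2}$, and so on. The strictly decreasing sequence $d-1\ge s_1>\cdots>s_r\ge 1$ must terminate, forcing $N^{2r+1}\omega=0$. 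Since K\"ahler classes form an open subset of $H^{1,1}(X,\R)$, this gives $N^{2r+1}=0$ as an operator on all of $H^{1,1}(X,\R)$. The most delicate step is the "non-descent" assertion in the zigzag, i.e.\ showing at each even step that $N^{2j}\omega$ lies in $F'_{s_j}\setminus F_{s_j-1}$ rather than dropping prematurely; this requires using the full strength of the positivity of $\omega$ and a sharpening of Part (1) ensuring that the Hodge--Riemann pairing does not vanish on the relevant Jordan subquotient. Carrying this induction cleanly is where the bulk of the technical work lies.
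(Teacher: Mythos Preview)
Your outline matches the strategy of \cite{DLOZ}, which is what the paper invokes here: the paper itself does not reprove the proposition but simply cites \cite[Lemma 3.8, Proposition 3.9]{DLOZ}, supplying only the implication (iii)$\Rightarrow$(i) (namely, if $L_{i-1}M_i^2\equiv 0$ then $M_i\in F_i$, hence $M_i\in F'_i$ by the first assertion of Part (1), while $M_i\notin F_{i-1}$ since $L_{i-1}M_i=L_i\not\equiv 0$). So the comparison is really between your sketch and the original \cite{DLOZ} argument.

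Your treatment of Part (1) and of the propagation lemma (B) is fine: the computation $L_{i-1}\cdot NM_i=f^*L_i-L_i=0$ is exactly right. The genuine gap is in your argument for (A), $N(F_i)\subseteq F'_i$. Your direct expansion
\[
L_{i-1}(N\gaa)^2=L_{i-1}(f^*\gaa)^2-2L_{i-1}\gaa\cdot f^*\gaa+L_{i-1}\gaa^2
\]
does not close: while $L_{i-1}(f^*\gaa)^2=f^*(L_{i-1}\gaa^2)$ is available, the cross term $L_{i-1}\gaa\cdot f^*\gaa$ is not controlled by the hypothesis $L_i\gaa\equiv 0$ alone, and ``unipotency'' does not by itself force the required cancellation. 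The point you are missing is that the Hodge--Riemann input from Part (1) gives more than Lorentzian signature on all of $H^{1,1}$: since a K\"ahler class never lies in $F_i$ for $i<d$, the form $Q_i$ restricted to $F_i$ is \emph{negative semi-definite}, and its isotropic cone there is a genuine linear subspace coinciding with $F'_i$. It is this definiteness on $F_i/F'_i$ (for every K\"ahler test class, not a fixed one) that forces the unipotent $f^*$ to act trivially on $F_i/F'_i$, hence $N(F_i)\subseteq F'_i$; a bare algebraic expansion cannot see this. The same sharpened use of positivity is what drives the ``non-descent'' steps $N^{2j}\go\notin F_{s_j-1}$ that you flag as delicate, so your sketch correctly locates the difficulty but does not yet contain the mechanism that resolves it.
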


The sequence $s_1 > \cdots > s_r$ in Proposition~\ref{pro-filtr} (2) 
depends on $f$ and is unique for a given 
quasi-nef sequence. The inverse $f^{-1}$ defines the same sequence
with respect to the same 
quasi-nef sequence by the next lemma.

\begin{lem}\label{lem-f-1filtr}
	Let $s_1 > \cdots > s_r$ be the sequence in Proposition~\ref{pro-filtr} (2)
	associated to $f$. Then
	for every K\"ahler class $\go \in H^{1,1}(X,\R)$ and every integer $j \in [1,r]$, we have
	$$((f^{-1})^* - \Id)^{2j-1}\go \in F_{s_j} \bss F'_{s_j} \ \ \text{ and } \ \
	((f^{-1})^* - \Id)^{2j}\go \in F'_{s_j} \bss F_{s_j - 1},$$
	and $((f^{-1})^* - \Id)^{2r+1}\go = 0$. 
\end{lem}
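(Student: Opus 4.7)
My strategy is to relate the powers of $N' \cnec (f^{-1})^* - \Id$ to those of $N \cnec f^* - \Id$ by a direct algebraic identity, and to exploit the fact that the filtration $F'_\bullet \subset F_\bullet$ together with the integers $s_1 > \cdots > s_r$ is intrinsic to the quasi-nef sequence, which is unchanged when $f$ is replaced by $f^{-1}$ (as observed right after the construction). In particular the same filtration and the same sequence of integers must work for $f^{-1}$.

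The key computation is as follows. From $(I+N')(I+N) = \Id$ one obtains
$$N' = -N(I+N)^{-1} = -N \cdot (f^{-1})^*.$$
Since $(f^{-1})^*$ is a polynomial in $f^*$ (as $f^*$ is unipotent), it commutes with $N$, and a straightforward induction yields
$$N'^{j} = (-1)^{j} N^{j} \cdot (f^{-j})^{*} \qquad \text{for every } j \ge 0.$$
Applying this to a K\"ahler class $\go$ and setting $\go_j \cnec (f^{-j})^*\go$, which is again K\"ahler, I get $N'^{j}\go = (-1)^{j} N^{j} \go_j$.

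Finally, I apply Proposition~\ref{pro-filtr}(2) to $f$ (with the given quasi-nef sequence) using the K\"ahler classes $\go_{2j-1}$ and $\go_{2j}$ in place of $\go$: this yields $N^{2j-1}\go_{2j-1} \in F_{s_j} \setminus F'_{s_j}$, $N^{2j}\go_{2j} \in F'_{s_j} \setminus F_{s_j - 1}$, and $N^{2r+1}\go_{2r+1} = 0$. Because $F_{s_j}$, $F'_{s_j}$ and $F_{s_j-1}$ are linear subspaces, the overall sign $(-1)^{j}$ coming from the identity above is irrelevant for membership, and the three desired assertions for $((f^{-1})^* - \Id)^{j}\go$ follow immediately. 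The argument is entirely algebraic once one notes the identity $N'^{j} = (-1)^{j} N^{j}(f^{-j})^{*}$, and I do not anticipate any serious technical obstacle; the only point to check is that $\go_j$ is indeed K\"ahler, which is clear since $f^{-j} \in \Aut(X)$.
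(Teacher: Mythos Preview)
Your proof is correct and essentially identical to the paper's: both rest on the identity $N'^{j} = (-1)^{j} N^{j}(f^{-j})^{*} = (-1)^{j}(f^{-j})^{*}N^{j}$. The only cosmetic difference is that you apply $(f^{-j})^{*}$ to $\go$ first and invoke Proposition~\ref{pro-filtr}(2) for the new K\"ahler class $\go_j$, whereas the paper applies $(f^{-j})^{*}$ to $N^{j}\go$ and uses the $f^*$-invariance of the subspaces $F_{s_j}$ and $F'_{s_j}$; either way the conclusion is immediate.
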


\begin{proof}
	Since both $F_{s_j}$ and $F_{s_j}'$ are $f^*$-invariant, we have
	$$((f^{-1})^* - \Id)^{2j-1}\go = (-1)^{2j-1}(f^{1-2j})^*(f^* - \Id)^{2j-1}\go \in F_{s_j} \bss F'_{s_j}.$$
	The same argument shows that $((f^{-1})^* - \Id)^{2j}\go \in F'_{s_j} \bss F_{s_j - 1}$
	and $((f^{-1})^* - \Id)^{2r+1}\go = 0$.
\end{proof}

The following two lemmas are both consequences of Proposition~\ref{pro-filtr} (1).

\begin{lem}\label{lem-vanprod}
	For $i \in [1,d] \cap \Z$,
	take $\eta_i \in F'_i$.
	Let $p \in [1,d] \cap \Z$ and $j \in [0,p] \cap \Z$. Then:
	\begin{enumerate}
		\item
		There exists some $C \in \R$
		such that
		$$L_j\eta_{j+1} \cdots \eta_p \equiv CL_p.$$
		\item For any $\eta \in F_p$, we have
		$$L_j\eta_{j+1} \cdots \eta_p\eta \equiv 0.$$
	\end{enumerate}
\end{lem}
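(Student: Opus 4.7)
My plan is to prove (1) by strong induction on $p - j$ and to derive (2) at each level as an immediate consequence of (1) at the same level. The base case $p = j$ is trivial: the product $L_j\eta_{j+1}\cdots\eta_p$ reduces to $L_j = L_p$, so (1) holds with $C = 1$, and (2) then reduces to the defining property $L_p\eta \equiv 0$ for $\eta \in F_p$.

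For the inductive step of (1), I would exploit Proposition~\ref{pro-filtr}(1), which forces the inclusion $F_p' \subset F_{p-1} \oplus \R\cdot M_p$. This lets me decompose $\eta_p = \alpha_p + c_p M_p$ with $\alpha_p \in F_{p-1}$ and $c_p \in \R$ (taking $c_p = 0$ when $F_{p-1} = F_p'$), splitting the product as
$$L_j\eta_{j+1}\cdots\eta_{p-1}\eta_p = L_j\eta_{j+1}\cdots\eta_{p-1}\alpha_p + c_p\, L_j\eta_{j+1}\cdots\eta_{p-1}M_p.$$
The first summand is $\equiv 0$ by the inductive hypothesis of (2) at level $(p-1,j)$ applied with $\eta = \alpha_p \in F_{p-1}$. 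For the second summand, the inductive hypothesis of (1) at level $(p-1,j)$ yields $L_j\eta_{j+1}\cdots\eta_{p-1} = C'L_{p-1} + \delta$ with $\delta \equiv 0$ in $H^{p-1,p-1}(X,\R)$, whence $L_j\eta_{j+1}\cdots\eta_{p-1}M_p = C'L_p + \delta M_p$.

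The finishing step relies on a simple preservation lemma: if $\delta \in H^{i,i}(X,\R)$ satisfies $\delta \equiv 0$ and $\beta$ lies in the span of products of $(1,1)$-classes, then $\delta\beta \equiv 0$. Indeed, pairing $\delta\beta$ against $H^{1,1}(X,\R)^{d-i-k}$ (where $k$ is the bidegree of $\beta$) amounts to pairing $\delta$ against $\beta\cdot H^{1,1}(X,\R)^{d-i-k} \subset H^{1,1}(X,\R)^{d-i}$, which vanishes by assumption. Applied with $\beta = M_p$, this gives $\delta M_p \equiv 0$, so the inductive step of (1) closes with constant $C = c_p C'$. Finally, the implication $(1) \Rightarrow (2)$ at the same level is immediate: writing $L_j\eta_{j+1}\cdots\eta_p = CL_p + \delta$ with $\delta \equiv 0$ and multiplying by $\eta \in F_p$ gives $CL_p\eta + \delta\eta$, whose first term is $\equiv 0$ by definition of $F_p$ and whose second by the same preservation lemma.

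I do not anticipate any substantive obstacle: the structure theorem for $F_p'/F_{p-1}$ provided by Proposition~\ref{pro-filtr}(1) does all the real geometric work, and the remainder is standard bookkeeping. The only mild subtlety is the interleaving between (1) and (2) in the induction, handled by observing that (2) at a given level follows at once from (1) at that level via the preservation lemma.
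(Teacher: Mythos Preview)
Your proof is correct and follows essentially the same approach as the paper's: both use Proposition~\ref{pro-filtr}(1) to write $\eta_p - c_p M_p \in F_{p-1}$, then induct on the length $p-j$, and derive (2) from (1) via the definition of $F_p$. Your version is more explicit about two points the paper leaves implicit---the preservation of $\equiv$ under multiplication by $(1,1)$-classes, and the interleaving of (1) and (2) in the induction---but the underlying argument is the same.
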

\begin{proof}
	
	Since either $F'_p = F_{p-1}$ or
	$F'_p/F_{p-1}$ is a line spanned by $M_p + F_{p-1}$ by Proposition~\ref{pro-filtr} (1),
	there exists some $C_p \in \R$ such that
	$\eta_p - C_pM_p \in F_{p-1}$.
	As $L_{p-1}F_{p-1} \equiv 0$,
	we have
	$$L_{p-1}\eta_p \equiv C_p L_{p-1}M_p = C_p L_p.$$
	Induction proves that $L_j\eta_{j+1} \cdots \eta_p \equiv CL_p$ for some $C \in \R$.
	
	Since $L_p F_p \equiv 0$, (2) follows from (1) and the definition of $F_p$.
\end{proof}

\begin{lem}\label{lem-equalM}
	Assume that $M_1 =\cdots = M_i \in H^{1,1}(X,\R)$.
	Then
	$$F'_j = F_{j-1}$$
	for every $j \le i-1$.
\end{lem}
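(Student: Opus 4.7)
The plan is to invoke the equivalence $F_{j-1} \ne F'_j \iff L_{j-1} M_j^2 \equiv 0$ furnished by Proposition~\ref{pro-filtr}(1). Thus proving the lemma amounts, contrapositively, to establishing that $L_{j-1} M_j^2 \not\equiv 0$ for every $j \le i-1$.

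The hypothesis $M_1 = \cdots = M_i$ makes the required computation immediate. For any $j$ with $1 \le j \le i-1$, every index appearing in $L_{j-1} M_j^2 = M_1 \cdots M_{j-1} \cdot M_j \cdot M_j$ is at most $j+1 \le i$, so each factor equals $M_1$ and the product collapses to
$$L_{j-1} M_j^2 = M_1^{j+1} = L_{j+1}.$$
Hence the lemma is reduced to showing $L_{j+1} \not\equiv 0$.

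For this last step, I would use the fact that $L_{j+1} \in \Nef(L_j) \subset \cK^{j+1}(X)$ by construction of the quasi-nef sequence, and that $L_{j+1} \ne 0$ as a cohomology class by the defining property of the sequence. Any nonzero class in $\cK^{j+1}(X)$ admits a nonzero closed positive $(j+1,j+1)$-current representative whose mass with respect to a K\"ahler class $\go$ equals the cohomological intersection $L_{j+1} \cdot \go^{d-j-1}$; positivity of mass then forces this number to be strictly positive, so in particular $L_{j+1} \not\equiv 0$. This positivity-of-mass point is arguably the most delicate step — though it is standard on compact K\"ahler manifolds — and combining it with the identification $L_{j-1} M_j^2 = L_{j+1}$ and Proposition~\ref{pro-filtr}(1) yields $F'_j = F_{j-1}$ for all $j \le i-1$, completing the proof.
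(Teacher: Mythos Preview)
Your proof is correct and follows essentially the same approach as the paper: both invoke the equivalence in Proposition~\ref{pro-filtr}(1), identify $L_{j-1}M_j^2$ with $L_{j+1}$ via the hypothesis $M_1=\cdots=M_i$, and conclude from $L_{j+1}\not\equiv 0$. The paper phrases it as a proof by contradiction and simply asserts that $L_{j+1}\equiv 0$ is impossible, whereas you spell out the justification via positivity of mass of the associated positive current.
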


\begin{proof}
	Assume to the contrary that $F'_j \ne F_{j-1}$ for some $j \le i-1$.
	By Proposition~\ref{pro-filtr} (1), we would have
	$$L_{j+1} = L_{j-1}M_jM_{j+1} = L_{j-1}M_j^2  = 0,$$
	which is impossible.
	Hence $F'_j = F_{j-1}$ for every $j \le i-1$.
\end{proof}

As a consequence of these results,
we obtain the following refinements of
Theorem~\ref{JBbound}.

\begin{cor}\label{cor25K'}
	Let $\phi : X \to B$ be a surjective morphism with connected fibers
	between compact K\"ahler manifolds.
	Let $f \in \Aut (X)$ such that $f^* : H^{1,1}(X, \R) \cto$ is unipotent and
	$\phi^*\go_B$ is $f^*$-invariant for some K\"ahler class $\go_B \in H^{1,1}(B,\R)$. Then
	$$k(f) \le 2(\dim X - \dim B).$$
	Here, we recall that $k(f)+1$ is the maximal size of the Jordan blocks of the Jordan canonical form of the unipotent $f^*|_{H^{1,1}(X, \R)}$.
\end{cor}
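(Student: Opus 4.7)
The plan is to exploit the $f^*$-fixed nef class $\phi^*\go_B$ by placing it at the head of a quasi-nef sequence: set $b \cnec \dim B$ and define $M_1 = M_2 = \cdots = M_b \cnec \phi^*\go_B$, then extend to a full quasi-nef sequence $M_1,\ldots,M_d$. The resulting equality of the first $b$ entries will force a long initial segment of collapses $F'_j = F_{j-1}$ in the filtration~\eqref{filt-F'F} via Lemma~\ref{lem-equalM}, which in turn will limit the number of Jordan jumps supplied by Proposition~\ref{pro-filtr}(2).

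First I would verify that this choice is admissible. The class $\phi^*\go_B$ is nef and $f^*$-invariant by hypothesis, and the iterated products $L_i \cnec (\phi^*\go_B)^i = \phi^*(\go_B^i)$ are nonzero for $i \le b$: indeed, for any K\"ahler class $\go$ on $X$ one has $\int_X (\phi^*\go_B)^i \cdot \go^{d-i} = \int_B \go_B^i \cdot \phi_*(\go^{d-i})$ by the projection formula, and $\phi_*(\go^{d-i})$ is a nonzero pseudo-effective class of the appropriate bidegree on $B$ because $\phi$ is surjective. Furthermore $L_i \cdot M_{i+1} = L_{i+1} \in \Nef(L_i)$ holds tautologically since $\phi^*\go_B \in \Nef(X)$ and $f^*L_{i+1} = L_{i+1}$. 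Once $M_1,\ldots,M_b$ are fixed, the remaining entries $M_{b+1},\ldots,M_d$ are produced by the usual Birkhoff--Perron--Frobenius argument, yielding a bona fide quasi-nef sequence.

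Applying Lemma~\ref{lem-equalM} to $M_1 = \cdots = M_b$ gives $F'_j = F_{j-1}$ for every $j \le b-1$. The jump indices $d-1 \ge s_1 > \cdots > s_r \ge 1$ provided by Proposition~\ref{pro-filtr}(2) are precisely those $s$ for which $F'_s \ne F_{s-1}$, so they are all forced to lie in the interval $[b, d-1]$, which contains only $d - b$ integers; hence $r \le d - b$. Since Proposition~\ref{pro-filtr}(2) simultaneously yields $(f^* - \Id)^{2r+1}\go = 0$ for every K\"ahler class $\go$, and K\"ahler classes span $H^{1,1}(X,\R)$, the nilpotent operator $f^* - \Id$ has nilpotency index at most $2r + 1$, whence $k(f) \le 2r \le 2(d-b)$. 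The only delicate step is the nonvanishing $(\phi^*\go_B)^i \ne 0$ for $i \le b$, which boils down to surjectivity of $\phi$; once that is in place the rest of the argument is a clean assembly of already-established pieces.
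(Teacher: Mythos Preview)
Your proof is correct and follows essentially the same route as the paper: build a quasi-nef sequence beginning with $b$ copies of $\phi^*\go_B$, invoke Lemma~\ref{lem-equalM} to collapse $F'_j = F_{j-1}$ for $j \le b-1$, and conclude $r \le d-b$ from Proposition~\ref{pro-filtr}(2). One small imprecision: Proposition~\ref{pro-filtr}(2) does not assert that the $s_j$ are \emph{precisely} the indices with $F'_s \ne F_{s-1}$, only that each $s_j$ satisfies this (since $(f^*-\Id)^{2j}\go \in F'_{s_j}\setminus F_{s_j-1}$); but that one-sided implication is all your argument uses, so nothing is lost.
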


\begin{proof}
	Let $m \cnec \dim B$.
	As $\phi^*\go_B$ is an $f^*$-invariant nef class and $\phi^*\go_B^m \not\equiv 0$, we can complete
	$$M_1 = \cdots = M_m = \phi^*\go_B$$
	to a quasi-nef sequence $M_1,\ldots,M_d$.
	By Lemma~\ref{lem-equalM}, we have
	$F'_j = F_{j-1}$
	for every $j \le m-1$.
	So according to Proposition~\ref{pro-filtr} (2) and the notation therein,
	necessarily $s_r \ge m$, so $r \le \dim X - \dim B$.
	Hence $(f^* - \Id)^{2(\dim X - \dim B) +1} (\go) = 0$ for every $\go \in H^{1,1}(X,\R)$.
\end{proof}

\begin{cor}\label{cor-JBboundkd}
	Let $X$ be a compact K\"ahler manifold of dimension $d \geq 1$ and of Kodaira dimension $\gk(X)$.
	Let $f \in \Aut(X)$ be an automorphism of zero entropy.
	\begin{enumerate}
		\item 
		We have
		$$k(f) \le 2(\dim X - \gk(X)).$$
		In other words,
		$$\|(f^{m})^*: H^{1,1}(X)\circlearrowleft\| =O\big(m^{2(d-\gk(X))}\big)$$
		as $m \to \infty$ for any norm of ${\rm End}_{\C}(H^{1,1}(X))$.
		\item The estimate in (1) is optimal, in the sense that
		for every $d \ge 1$ and $\gk \ge 1$, there exist some $X$ and $f \in \Aut(X)$ such that 
		$\dim(X) =  d$, $\gk(X) = \gk$, and
		$$\|(f^{m})^*: H^{1,1}(X)\circlearrowleft\| \sim_{m \to \infty} C m^{2(d-\gk(X))}$$
		for some $C > 0$.
	\end{enumerate}
\end{cor}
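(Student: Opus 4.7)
\emph{Plan.} I would prove the upper bound (1) and the optimality (2) separately. For (1) my strategy is to run the quasi-nef sequence machinery of Section~\ref{sec-filtr} — a slight generalization of Corollary~\ref{cor25K'} — applied to the pluricanonical fibration of $X$. For (2) I would combine the explicit family of examples to be constructed in Section~\ref{sec-MainTheorem2} with a product construction adjusting the Kodaira dimension.

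\emph{Proof of (1).} First, replace $f$ by a positive power so that $f^* : H^{1,1}(X,\R) \cto$ is unipotent; this preserves $k(f)$. If $\kappa(X) \le 0$, Theorem~\ref{JBbound} already yields $k(f) \le 2(d-1) \le 2(d-\kappa(X))$, so assume $\kappa(X) \ge 1$. For $m$ sufficiently divisible, the $m$-th pluricanonical map $\phi_m : X \dashrightarrow Y_m \subset \mathbb P^N$ has image of dimension $\kappa(X)$. Since $f^*K_X \cong K_X$, the action of $f^*$ on $H^0(X, mK_X)$ induces a linear automorphism $\rho \in \mathrm{PGL}(N+1)$ with $\rho(Y_m) = Y_m$ and $\phi_m \circ f = \rho \circ \phi_m$. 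Passing to an equivariant resolution $\nu : \tilde X \to X$ of the indeterminacy of $\phi_m$, I obtain $\tilde f \in \Aut(\tilde X)$ with $\nu \circ \tilde f = f \circ \nu$, a morphism $\tilde\phi_m : \tilde X \to Y_m$ intertwining $\tilde f$ with $\rho$, and the equality $k(\tilde f) = k(f)$ by Proposition~\ref{pro-birJB}. Set $L \cnec \tilde\phi_m^* c_1(\mathcal O_{\mathbb P^N}(1)|_{Y_m})$. This class is nef, $\tilde f^*$-invariant (because $\rho$ is linear and hence $\rho^* \mathcal O_{\mathbb P^N}(1) \cong \mathcal O_{\mathbb P^N}(1)$), and satisfies $L^{j+1} \not\equiv 0$ for every $0 \le j \le \kappa(X)-1$ (on the regular locus of $\tilde\phi_m$ the form $(\tilde\phi_m^* H)^{j+1}$ is a nonzero smooth semi-positive $(j+1,j+1)$-form, so its wedge with $\omega^{d-j-1}$ for a K\"ahler class $\omega$ integrates to a strictly positive number). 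Setting $M_1 = \cdots = M_{\kappa(X)} = L$ and extending to a full quasi-nef sequence on $\tilde X$, Lemma~\ref{lem-equalM} gives $F'_j = F_{j-1}$ for every $j \le \kappa(X)-1$, and Proposition~\ref{pro-filtr}(2) then forces $s_r \ge \kappa(X)$, so $r \le d - \kappa(X)$ and $k(\tilde f) \le 2r \le 2(d-\kappa(X))$. The operator-norm bound in (1) follows from the standard equivalence between the maximal Jordan block size of a unipotent operator and the polynomial growth rate of its iterates. The main obstacle here is to confirm that the proof of Corollary~\ref{cor25K'}, originally phrased with a K\"ahler $\omega_B$ on a smooth base, really goes through with the nef (but only Zariski-pullback) class $L$; the check amounts to verifying that the only inputs used are $\tilde f^*$-invariance, nefness, and $L^{j+1} \not\equiv 0$ for $j \le \kappa(X)-1$, all of which I have established.

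\emph{Proof of (2).} The case $\kappa = d$ is immediate via $f = \Id_X$ on any smooth projective $X$ of general type. For $1 \le \kappa < d$, I would use a product construction. By Theorem~\ref{main2}(4), whose proof is carried out in Section~\ref{sec-MainTheorem2}, there exists a smooth projective variety $A$ with $\dim A = d-\kappa+1$, $\kappa(A) = 1$, and an automorphism $g$ realising $k(g) = 2(\dim A - 1) = 2(d-\kappa)$, i.e., saturating the upper bound of part (1) applied to $(A, g)$. Let $C$ be a product of $\kappa-1$ smooth projective curves of genus at least $2$, so $\dim C = \kappa-1$ and $\kappa(C) = \kappa - 1$. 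Set $X \cnec A \times C$ and $f \cnec g \times \Id_C$; then $\dim X = d$, and by additivity of plurigenera for products, $\kappa(X) = \kappa(A) + \kappa(C) = \kappa$. Via the projection $p_A : X \to A$, the K\"unneth summand $p_A^* H^{1,1}(A) \subset H^{1,1}(X)$ is $f^*$-stable and the restriction of $f^*$ to it is conjugate to $g^* : H^{1,1}(A) \cto$; consequently $k(f) \ge k(g) = 2(d-\kappa)$, which combined with the upper bound from (1) yields $k(f) = 2(d-\kappa)$. The main obstacle here is to verify, by inspecting the construction in Section~\ref{sec-MainTheorem2}, that the example provided by Theorem~\ref{main2}(4) does attain $k(g) = 2(\dim A - 1)$ and not merely a smaller even integer compatible with the upper bound of part (1).
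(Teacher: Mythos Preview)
Your proposal is correct and tracks the paper's argument closely, but with one genuine simplification in part~(1) worth noting. The paper passes to a model where the Iitaka fibration is a morphism $\phi:\tilde X\to B$ onto a smooth projective base, then invokes \cite[Theorem~A]{NZ} to conclude that the induced self-map of $B$ has finite order; only after a further iteration does $\phi^*\omega_B$ become $\tilde f^*$-invariant, and Corollary~\ref{cor25K'} applies. You instead pull back the hyperplane class along the pluricanonical morphism and observe that any element of $\mathrm{PGL}$ fixes it automatically, so $L$ is $\tilde f^*$-invariant with no appeal to \cite{NZ} and no further iteration. Since the proof of Corollary~\ref{cor25K'} uses only that the class is nef, $f^*$-invariant, and has $L^{\kappa(X)}\not\equiv 0$, your shortcut is legitimate; it buys a cleaner, more self-contained argument at the cost of re-verifying that Corollary~\ref{cor25K'} does not actually need a smooth base or connected fibers.

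For part~(2) your route coincides with the paper's: both rely on the explicit family $Y_d=C\times E^{d-1}$ built in Section~\ref{sec-MainTheorem2}, where $k(g_d)=2(d-1)$ is read off via the generically finite map to $E^d$ and Proposition~\ref{pro-birJB}, and then one takes a product with a variety of general type. Your caveat is well placed: Theorem~\ref{main2}(4) as stated only records $\Plov$, and Keeler's bounds alone do not force $k(g)=2(\dim A-1)$ from $\Plov(g)=(\dim A)^2$; one really must inspect the construction. The paper organizes the logic in the reverse order (proving Corollary~\ref{cor-JBboundkd}(2) first, then deducing Theorem~\ref{main2}(4)), which avoids this apparent circularity.
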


We prove first Corollary~\ref{cor-JBboundkd} (1).
We will prove  Corollary~\ref{cor-JBboundkd} (2) in Section~\ref{sec-MainTheorem2} 
by constructing explicit examples.

\begin{proof}[Proof of Corollary~\ref{cor-JBboundkd} (1)]
	By an equivariant K\"ahler desingularization, there exists
	a bimeromorphic morphism $\nu : \wt{X} \to X$ form a compact K\"ahler manifold $\wt{X}$
	such that $f$ lifts to an automorphism $\ti{f} \in \Aut(\wt{X})$ and that $\wt{X}$
	admits a surjective morphism $\phi : \wt{X} \to B$ to a projective manifold
	as a model of its Iitaka fibration.
	As $\phi$ is an Iitaka fibration of $\wt{X}$,
	$\ti{f}$ descends to a bimeromorphic self-map of $B$ of finite order
	by \cite[Theorem A]{NZ}.
	Up to replacing $f$ by a finite iteration of it,
	we can assume that $\phi$ is $\ti{f}$-invariant.
	In particular, $\phi^*\go_B$ is
	$\ti{f}^*$-invariant for every K\"ahler class $\go_B \in H^{1,1}(X,\R)$.
	
	Since $f$ has zero entropy,
	we have $d_1(\ti{f}) = 1$
	by 
	Lemma~\ref{lem:equiv_dyn} and~\cite[Theorem 1.1]{DNT} for the invariance under a generically finite map.
	Replacing $f$ by its finite iteration,
	we can assume that $\ti{f}^*: H^{1,1}(\wt{X},\C) \cto$ is unipotent by Lemma~\ref{lem:equiv_dyn}.
	Thus by Corollary~\ref{cor25K'}, we have
	$$\|(\ti{f}^{m})^*: H^{1,1}(\wt{X},\C)\circlearrowleft\|
	=_{m \to \infty} O\big(m^{2(d-\gk(X))}\big).$$
	As $H^{1,1}({X},\C) \hto H^{1,1}(\wt{X},\C)$ is
	$\ti{f}^*$-stable and the restriction of $\ti{f}^* : H^{1,1}(\wt{X},\C) \cto$
	to $H^{1,1}({X},\C)$ is ${f}^* : H^{1,1}({X},\C) \cto$, we have
	$$\|(f^{m})^*: H^{1,1}(X,\C)\circlearrowleft\| =_{m \to \infty}
	O\big(m^{2(d-\gk(X))}\big).$$
\end{proof}

\ssec{Some vanishing lemmas}
\hfill

{\it From now on till the end of Section~\ref{sec-filtr},
	$f \in \Aut(X)$ is an automorphism such that $f^*: H^{1,1}(X) \cto$ is unipotent.}
Under this assumption, $(f^{-1})^*: H^{1,1}(X) \cto$ is also unipotent.
Recall that in Section~\ref{sec-plov}, we have defined
$$N \cnec f^* - \Id \in \End(H^{1,1}(X,\R)) \ \ \text{ and } \ \
N' \cnec (f^{-1})^* - \Id \in \End(H^{1,1}(X,\R)),$$
and also $N_m \cnec N^m + N'^m$.

In this subsection, we will prove some vanishing results of intersections of $(1,1)$-classes 
which are images of $N_m$ or $N^m$. 
Let us start with
the following lemma.

\begin{lem}\label{lem-N2i-1}
	Let $\gaa \in H^{1,1}(X,\R)$.
	\begin{enumerate}
		\item Let $d-1 \ge s_1 > \cdots > s_r \ge 1$ be the sequence 
		associated to $f$ as in Proposition~\ref{pro-filtr} (2).  Then we have
		$$N_{2i - 1}(\go), N_{2i}(\go) \in F'_{s_i} \bss F_{s_i - 1}$$
		for any K\"ahler class $\go$.	
		In particular,
		$$N_{2i - 1}(\gaa), N_{2i}(\gaa) \in F_{d-i}'.$$
		\item
		Both $N_{k(f) - 1}(\gaa)$ and  $N_{k(f)}(\gaa)$ are $f^*$-invariant.
		\item
		If $\go$ is nef,
		then both $N_{k(f)}(\go)$ and $N_{k(f)-1}(\go)$ are nef.
	\end{enumerate}
\end{lem}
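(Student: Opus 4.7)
The plan is to express $N_m = N^m + N'^m$ as a polynomial in $N$ alone. Since $(f^{-1})^* = (\Id + N)^{-1} = \sum_{k \ge 0}(-N)^k$, we have $N' = -N + N^2 - N^3 + \cdots$. Raising this to the $m$-th power and collecting the two lowest-degree contributions gives the key identity
\[
N_{2i-1} \;=\; (2i-1)\,N^{2i} + O(N^{2i+1}), \qquad N_{2i} \;=\; 2\,N^{2i} + O(N^{2i+1}),
\]
where the $N^{2i-1}$-terms in $N^{2i-1}$ and $N'^{2i-1}$ cancel in the odd case (because $(-1)^{2i-1} = -1$) and the $N^{2i}$-terms in $N^{2i}$ and $N'^{2i}$ double up in the even case. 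Every statement in the lemma will be extracted from this.

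For part (1), apply the identity to a K\"ahler class $\go$ and invoke Proposition~\ref{pro-filtr}(2): $N^{2j}(\go) \in F'_{s_j} \setminus F_{s_j - 1}$ and $N^{2j-1}(\go) \in F_{s_j} \setminus F'_{s_j}$. For every $j > i$ one has $s_j \le s_i - 1$, so every contribution of order strictly greater than $2i$ to $N_{2i-1}(\go)$ or $N_{2i}(\go)$ lands in $F_{s_i - 1}$, while the leading term $c\,N^{2i}(\go)$ with $c \ne 0$ lies in $F'_{s_i} \setminus F_{s_i - 1}$. This puts $N_{2i-1}(\go), N_{2i}(\go) \in F'_{s_i} \setminus F_{s_i - 1}$. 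The ``in particular'' assertion follows from $s_i \le d - i$ (which holds because $s_1 \le d - 1$ and the sequence $s_\bullet$ is strictly decreasing), together with the linearity of $N_m$ and the fact that $F'_{d-i}$ is a linear subspace, so one can pass from K\"ahler classes to arbitrary $\gaa \in H^{1,1}(X,\R)$ by writing $\gaa$ as a difference of two K\"ahler classes.

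For part (2), let $k = k(f)$. Proposition~\ref{pro-filtr}(2) gives $N^{2r+1} = 0$ while $N^{2r}(\go) \ne 0$ for any K\"ahler $\go$, so $k = 2r$ is even and $N^{k+1} = 0$. Taking $2i = k$ in the identities above makes the $O(N^{2i+1})$ remainders vanish exactly, yielding $N_{k-1} = (k-1)\,N^{k}$ and $N_{k} = 2\,N^{k}$ as operators on $H^{1,1}(X,\R)$; both are annihilated by left multiplication by $N$, so $f^* = \Id + N$ fixes $N_{k-1}(\gaa)$ and $N_k(\gaa)$ for every $\gaa$. For part (3), the expansion $(f^*)^m = \sum_{j=0}^{k}\binom{m}{j} N^j$ gives
\[
\tfrac{1}{k!}\,N^{k}(\go) \;=\; \lim_{m\to\infty} \frac{(f^*)^m(\go)}{m^{k}},
\]
since $\binom{m}{k}N^k(\go)$ dominates as $m \to \infty$. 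When $\go$ is nef, every $(f^*)^m(\go)$ is nef because $f^*$ preserves $\Nef(X)$; passing to the limit in the closed nef cone shows $N^k(\go)$ is nef, and the explicit formulas above show $N_{k-1}(\go)$ and $N_k(\go)$ are positive multiples of $N^k(\go)$, hence nef. The only genuinely delicate point is the leading-coefficient computation in the first display; once that is in place, the filtration bookkeeping in (1) and the nef-cone limit in (3) are essentially routine.
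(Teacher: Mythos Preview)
Your proof is correct and in several places cleaner than the paper's. The underlying idea is the same---reduce $N_m$ to powers of $N$---but you carry it out differently.

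The paper proves (1) in two separate halves. For the odd case it factors $N_{2i-1} = N^{2i} \circ \sum_{j=0}^{2i-2}(f^{-2i+j+1})^*$ via the identity $N+N' = -NN'$, and then applies Proposition~\ref{pro-filtr}(2) directly to the K\"ahler class $\sum_j (f^{-2i+j+1})^*\go$. For the even case it treats $N^{2i}(\go)$ and $N'^{2i}(\go)$ separately (using Lemma~\ref{lem-f-1filtr} for the latter), and then must rule out cancellation in the sum by a positivity argument: one shows $L_{s_i-1}N^{2i}\go \equiv C L_{s_i}$ and $L_{s_i-1}N'^{2i}\go \equiv C' L_{s_i}$ with $C,C'>0$ via the limit description of $N^{2i}$, so $C+C'\neq 0$. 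Your single expansion $N_m = c_m N^{2\lceil m/2\rceil} + O(N^{2\lceil m/2\rceil+1})$ with explicit $c_m \neq 0$ handles both parities at once and bypasses the positivity step entirely, at the mild cost of having to track where the higher-order correction terms fall in the filtration (which is straightforward since $s_{i+1} \le s_i - 1$).

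For (2) and (3) your approach is again more direct: the exact identities $N_{k-1} = (k-1)N^k$ and $N_k = 2N^k$ (valid because $N^{k+1}=0$ kills the remainder) give $f^*$-invariance and nefness immediately, whereas the paper argues invariance from the limit $N^k/k! = \lim_m (f^*)^m/m^k$ and the relation $N'^k = N^k(f^{-k})^*$, and nefness from the same limit applied termwise to the expression $N_{k-1} = \sum_j N^k (f^{-k+j+1})^*$. Both routes are short; yours has the bonus of producing the explicit scalar relation between $N_{k-1}$, $N_k$, and $N^k$.
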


\begin{proof}
	First we prove (1). Note that $N + N' = -NN'$, so
	\begin{equation*}
		\begin{split}
			N_{2i-1}
			& = N^{2i - 1} +  N'^{2i - 1}
			= (N + N')\sum_{j=0}^{2i-2}(-1)^j N^jN'^{2i-2-j} \\
			& = \sum_{j=0}^{2i-2}(-1)^{j+1} N^{j+1}N'^{2i-1-j} \\
			& =  \sum_{j=0}^{2i-2} (-1)^{j+1} (f^* - \Id)^{j+1}\((f^{-1})^* - \Id\)^{2i-j-1} \\
			& =  \sum_{j=0}^{2i-2} (f^* - \Id)^{2i}(f^{-2i+j+1})^* =
			\sum_{j=0}^{2i-2} N^{2i} \circ (f^{-2i+j+1})^*
		\end{split}
	\end{equation*}
	Since  $\sum_{j=0}^{2i-2}(f^{-2i+j+1})^*\go$ is K\"ahler, Proposition~\ref{pro-filtr} (2) implies
	$$N_{2i-1}(\go) = N^{2i} \(\sum_{j=0}^{2i-2}(f^{-2i+j+1})^*\go\) \in F_{s_i}' \bss F_{s_i - 1}.$$ 
	
	By Proposition~\ref{pro-filtr} (2) and Lemma~\ref{lem-f-1filtr}, 
	we have $N^{2i}(\go), N'^{2i}(\go) \in F_{s_i}'$,
	so $N_{2i}(\go) \in F_{s_i}'$.
	Since $N^{2i}\go \in F_{s_i}'$,
	we have 
	$$L_{s_i - 1}N^{2i}\go \equiv C L_{s_i}$$
	for some $C \in \R$ by Lemma~\ref{lem-vanprod}. Since $N^{2i}\go \notin F_{s_i - 1}$, we have $C \ne 0$.
	Moreover, as $L_{s_i - 1}N^{p}\go \equiv 0$ for every $p > 2i$ by Proposition~\ref{pro-filtr} (2),
	we have
	$$L_{s_i - 1}N^{2i}\go = (2i)! \cdot \lim_{m \to \infty} L_{s_i - 1}\frac{(f^*)^m(\go)}{m^{2i}} \in \cK^{s_i}(X)/\equiv$$
	where $\cK^{s_i}(X)/\equiv$ denotes the image of $\cK^{s_i}(X)$ in $H^{s_i,s_i}(X,\R)/\equiv$.
	Since $L_{s_i} \in \cK^{s_i}(X)$, necessarily $C > 0$.
	Since $s_1 > \cdots > s_r$ is also the sequence 
	associated to $f^{-1}$ by Lemma~\ref{lem-f-1filtr},
	the same argument shows that there exists $C' > 0$ such that
	$$L_{s_i - 1}N'^{2i}\go \equiv C' L_{s_i}.$$
	Hence $$L_{s_i - 1}N_{2i}(\go) \equiv (C + C') L_{s_i} \not \equiv 0,$$
	namely $N_{2i}(\go) \notin F_{s_i - 1}$. 
	The last part follows from $F'_{s_i} \subset F'_{d-i}$, noting that $s_i \le d-i$.
	
	For (2), recall that $k(f)$ is an even number (Theorem~\ref{JBbound})
	so we can write $k(f) = 2i$.
	Since $N^{2i+1} = 0$ and $f^* = \Id + N$, we have
	$$N^{2i}/(2i)! = \lim_{m \to \infty} (f^*)^m/m^{2i}$$
	whose image is hence $f^*$-invariant.
	Since $f^*$ commutes with $N$, and
	$N' = -N (f^{-1})^*$, we have $N'^{2i} = N^{2i} (f^{-2i})^*$ whose image is hence $f^*$-invariant too. Thus the images of $N_{2i} = N^{2i} + N'^{2i}$ and $N_{2i-1} = \sum_{j=0}^{2i-2} \, N^{2i} \circ (f^{-2i+j+1})^*$ are also $f^*$-invariant.
	
	For (3),
	$$N^{2i}(\go)= (2i)! \, \lim_{m \to \infty} (f^*)^m(\go)/m^{2i}, \,\,\, N_{2i-1}(\go) = \sum_{j=0}^{2i-2} \, N^{2i} ((f^{-2i+j+1})^*(\go))$$
	are clearly all nef.
\end{proof}

\begin{cor}\label{cor-escalier} 
	Let $\go$ be a K\"ahler class. Assume that $k(f) = 2d-2$. Then for every integer $\ell \ge 2$, we have
	$$N_{i_1}(\go) \cdots N_{i_\ell}(\go) \equiv 0$$
	whenever 
	$$i_j \ge 2(d-j)-1 \text{ for all } j \le \ell -2, 
	\text{ and } i_{\ell - 1}, i_\ell \ge 2(d-\ell +1)-1.$$	
	Moreover, whenever
	$$i_j \in \{ 2(d-j), 2(d-j)-1\} \text{ for all } j,$$
	there exists some $C \in \R$ such that
	$$N_{i_1}(\go)\cdots N_{i_j}(\go) \equiv C N_{2d-2}(\go)\cdots N_{2(d-j)}(\go).$$
\end{cor}

\begin{proof}
	Corollary~\ref{cor-escalier} follows directly from
	Lemma~\ref{lem-vanprod} and Lemma~\ref{lem-N2i-1}. 
	Indeed, by the assumption and Lemma~\ref{lem-N2i-1}, we have 
	$$N_{i_1}(\go) \in F_1',\, \ \ \
	N_{i_2}(\go) \in F_2', \ \ \ \ldots, \ \ \ 
	N_{i_{l-2}}(\go) \in F_{l-2}', \ \ \ N_{i_{l-1}}(\go), N_{i_{l}}(\go) \in F_{l-1}'.$$ 
	Thus the first assertion follows from Lemma~\ref{lem-vanprod} (2).
	
	Similarly, by the assumption and Lemma~\ref{lem-N2i-1}, we have 
	$$N_{i_1}(\go), N_{2d-2}(\go) \in F_1',\, \ \ \ 
	N_{i_2}(\go), N_{2d-4}(\go) \in F_2',  \ \ \ \ldots, \ \ \
	N_{i_j}(\go), N_{2(d-j)}(\go)  \in F_{j}'.$$
	Thus the second assertion follows from Lemma~\ref{lem-vanprod} (1). 
\end{proof}

\begin{lem}\label{lem-equiv2N}
	Let $m$ be a positive integer and let 
	$$\gS \cnec \left\{ N^{k(f)}(\go), N_{k(f)}(\go), N_{k(f)-1}(\go) \mid \go \in H^{1,1}(X,\R) \text{ K\"ahler } \right\}.$$
	Then the following conditions are equivalent:
	\begin{enumerate}
		\item $M_1\cdots M_m \not\equiv 0$ for some $M_1,\ldots,M_m \in \gS$.
		\item $M_1\cdots M_m \not\equiv 0$ for every $M_1,\ldots,M_m \in \gS$. 
	\end{enumerate}
\end{lem}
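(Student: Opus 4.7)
The plan is to reduce the apparently heterogeneous family $\gS$ to a single one-parameter family of the form $\{N^{k(f)}(\omega)\mid\omega\text{ K\"ahler}\}$, recast $m$-fold products as limits of pullback classes, and then transfer the $\equiv 0$ property by a positive-cone comparison.

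First I would simplify $\gS$. Since $N = f^* - \Id$ and $(f^{-1})^*$ both lie in the commutative algebra generated by $f^*$ on $H^{1,1}(X,\R)$, they commute, and since $k(f)$ is even one obtains
\[
(N')^{k(f)} = (f^{-k(f)})^* \circ N^{k(f)},
\]
hence $N_{k(f)}(\omega) = N^{k(f)}\bigl(\omega + (f^{-k(f)})^*\omega\bigr)$, whose argument is K\"ahler when $\omega$ is. The identity
\[
N_{k(f)-1} = \sum_{j=0}^{k(f)-2} N^{k(f)}\circ (f^{-k(f)+j+1})^*
\]
extracted inside the proof of Lemma~\ref{lem-N2i-1} likewise expresses $N_{k(f)-1}(\omega)$ as $N^{k(f)}$ of a K\"ahler class. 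Thus $\gS = \{N^{k(f)}(\omega)\mid\omega\text{ K\"ahler}\}$.

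Next, writing each $M_i = N^{k(f)}(\omega_i)$ and using $N^{k(f)}/k(f)! = \lim_{n} (f^*)^n/n^{k(f)}$ on $H^{1,1}(X,\R)$ (a consequence of the Jordan structure, since $N^{k(f)+1} = 0$ there) together with the fact that $(f^*)^n$ is a ring endomorphism of $H^\bullet(X,\R)$, I obtain
\[
M_1\cdots M_m = (k(f)!)^m \lim_{n\to\infty}\frac{(f^*)^n(\omega_1\cdots\omega_m)}{n^{mk(f)}}.
\]
A Leibniz expansion shows that $(f^*)^n(\omega_1\cdots\omega_m)$ is polynomial in $n$ of degree at most $mk(f)$, so this limit exists in $H^{m,m}(X,\R)$. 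Since each $(f^*)^n(\omega_1\cdots\omega_m)$ is a product of K\"ahler classes, $M_1\cdots M_m$ lies in the closed cone $\cK^m(X)$.

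Finally I would compare two choices $(\omega_i)$ and $(\omega'_i)$. Pick $C_i>0$ such that $C_i\omega_i - \omega'_i$ is K\"ahler, and set $C := \prod_i C_i$, $\alpha := \omega_1\cdots\omega_m$, $\alpha' := \omega'_1\cdots\omega'_m$. Expanding $\prod_i(C_i\omega_i) = \prod_i\bigl(\omega'_i + (C_i\omega_i - \omega'_i)\bigr)$ gives $\alpha'$ plus a sum of products of K\"ahler classes, whence $C\alpha - \alpha' \in \cK^m(X)$. Since $(f^*)^n$ preserves $\cK^m(X)$ (pullback by a biholomorphism preserves positive forms) and $\cK^m(X)$ is closed, passing to the limit yields $CM_1\cdots M_m - M'_1\cdots M'_m \in \cK^m(X)$. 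Pairing with $\omega^{d-m}$ for K\"ahler $\omega$ and using that each of $M_1\cdots M_m$, $M'_1\cdots M'_m$, and $CM_1\cdots M_m - M'_1\cdots M'_m$ lies in $\cK^m(X)$, hence pairs non-negatively with $\omega^{d-m}$, I get
\[
0 \le M'_1\cdots M'_m\cdot\omega^{d-m} \le C\cdot M_1\cdots M_m\cdot\omega^{d-m},
\]
so $M_1\cdots M_m \equiv 0$ forces $M'_1\cdots M'_m \equiv 0$, and the equivalence follows by symmetry.

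The main obstacle is the first step: once $\gS$ is recognised as a single family $\{N^{k(f)}(\omega)\}$, the $m$-fold products become limits of positive $(m,m)$-form classes and everything reduces to the closedness and pullback-invariance of $\cK^m(X)$. Without this reduction one would have to control cross products of three syntactically different generators of $\gS$ using the filtration machinery of Section~\ref{sec-filtr}, which seems considerably more delicate.
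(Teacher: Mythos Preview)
Your proof is correct and takes a genuinely different route from the paper's.

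The paper argues by replacing one factor at a time: given $M_1\cdots M_{m-1}M_m\not\equiv 0$ with all $M_i\in\gS$, it completes $M_1,\ldots,M_{m-1}$ to a quasi-nef sequence (possible since each $M_i$ is nef and $f^*$-invariant by Lemma~\ref{lem-N2i-1}) and then invokes Proposition~\ref{pro-filtr}~(2) together with Lemma~\ref{lem-N2i-1}~(1) to see that any $M'_m\in\gS$ lies in $F'_{s_r}\setminus F_{s_r-1}$, hence $L_{m-1}M'_m\not\equiv 0$. This is entirely a filtration argument.

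Your approach sidesteps the filtrations completely. The key observation is your first step: once one rewrites $N_{k(f)}(\omega)$ and $N_{k(f)-1}(\omega)$ as $N^{k(f)}$ applied to suitable K\"ahler classes (using the identities already computed in the proof of Lemma~\ref{lem-N2i-1}), the whole set $\gS$ collapses to $\{N^{k(f)}(\omega)\mid\omega\text{ K\"ahler}\}$. From there the limit representation $M_1\cdots M_m = (k(f)!)^m\lim_n n^{-mk(f)}(f^*)^n(\omega_1\cdots\omega_m)$ places all such products in $\cK^m(X)$, and the comparison $C\alpha-\alpha'\in\cK^m(X)$ (expanded multilinearly) survives the limit because $\cK^m(X)$ is closed and $(f^*)^n$-stable. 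The final sandwich $0\le M'_1\cdots M'_m\cdot\omega^{d-m}\le C\,M_1\cdots M_m\cdot\omega^{d-m}$ gives vanishing against all $\omega^{d-m}$, and since this is a polynomial identity on the open K\"ahler cone, polarization yields $M'_1\cdots M'_m\equiv 0$.

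What each approach buys: the paper's argument is intrinsic to the dynamical filtration framework used throughout Section~\ref{sec-filtr} and generalises naturally to other elements sitting in prescribed pieces $F'_{s_i}\setminus F_{s_i-1}$. Your argument is more elementary and self-contained---it uses only the nilpotent limit formula and closedness of $\cK^m(X)$---but it relies crucially on the collapse $\gS=\{N^{k(f)}(\omega)\}$, which is special to the top indices $k(f)$ and $k(f)-1$ and would not extend to analogous statements involving $N_j$ for smaller $j$.
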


\begin{proof}
	Fix a positive integer $m$.
	It suffices to prove that (1) implies (2).
	To this end, it suffices to prove that given
	$M_1,\ldots,M_m , M_m' \in \gS$,	
	$$M_1\cdots M_{m-1} M_m \not\equiv 0 \ \ \text{ implies } \ \
	M_1\cdots M_{m-1} M'_m \not\equiv 0.$$
	Then we can replace each factor of $M_1\cdots M_m$
	by any choice of $m$ elements $M'_1,\ldots,M'_m \in \gS$ one by one and 
	obtain $M'_1\cdots M'_m \not\equiv 0$.
	
	Since every element of $\gS$ is 
	nef and $f^*$-invariant by Lemma~\ref{lem-N2i-1}, the sequence
	$M_1, \ldots, M_{m-1}$
	can be completed to a quasi-nef sequence.
	Since $L_{m-1}M_m = M_1\cdots M_{m-1} M_m \not\equiv 0$ and $M_m' \in \gS$ by assumption,
	Proposition~\ref{pro-filtr} (2) and Lemmas~\ref{lem-f-1filtr} and~\ref{lem-N2i-1} (1) 
	then imply that
	$$M_1\cdots M_{m-1} M'_m = L_{m-1} M'_m 
	\not\equiv 0.$$
\end{proof}

As for when we have $(N^{k(f)}\go)^i = 0$, we have the following.

\begin{lem}\label{lem-DP}
	$(N^{k(f)}\go)^i = 0$
	whenever $2i > d$.
\end{lem}
\begin{proof}
	Let $j \in \Z_{\ge 0}$. 
	Since $\|(f^n)^* : H^{1,1}(X) \cto \| = O(n^{k(f)})$, we have
	$$\|(f^n)^* : H^{j,j}(X) \cto\| = O(n^{jk(f)})$$
	by~\cite[Proposition 5.8]{Dinh-bis}. 
	Suppose that $(N^{k(f)}\go) ^j \ne 0$,
	then
	$$\|(f^n)^* : H^{j,j}(X) \cto \| \sim Cn^{jk(f)}.$$
	As 
	$$\|(f^n)^* : H^{j,j}(X) \cto \| \sim \|(f^n)^* : H^{d-j,d-j}(X) \cto \|,$$
	necessarily $(N^{k(f)}\go)^i = 0$
	whenever $2i > d$.
\end{proof}

\begin{cor}\label{cor-vankk-1}
	
	Let $m$ be a non-negative integer such that $N^{k(f)}(\go_0)^m \equiv 0$
	(or equivalently $N_{k(f)}(\go_0)^m \equiv 0$ by Lemma~\ref{lem-equiv2N})
	for some K\"ahler class $\go_0$.
	Then for every  $\go \in H^{1,1}(X,\R)$, we have
	$$N_{k(f)}(\go)^iN_{k(f) -1}(\go)^j \equiv 0$$
	whenever $i + j \ge m$.
	
	As a consequence, for every  $\go \in H^{1,1}(X,\R)$
	and every pair of non-negative integers $i$ and $j$ such that $2i + 2j > \min(d, 2d - k(f))$, we have
	$$N_{k(f)}(\go)^iN_{k(f) -1}(\go)^j \equiv 0.$$
\end{cor}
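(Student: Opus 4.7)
For the main statement, I would start by reducing to the case where $\go$ is K\"ahler. Since the K\"ahler cone is open in $H^{1,1}(X,\R)$, any class $\go \in H^{1,1}(X,\R)$ can be written as a difference $\go = \go_1 - \go_2$ of two K\"ahler classes. Expanding $N_{k(f)}(\go)^iN_{k(f)-1}(\go)^j$ multilinearly then produces a finite sum of terms of the form $\prod_{l=1}^{i+j}\mu_l$ where each $\mu_l\in\gS$ has a K\"ahler argument ($\go_1$ or $\go_2$). By the hypothesis and Lemma~\ref{lem-equiv2N}, every product of exactly $m$ elements of $\gS$ is $\equiv 0$; and this propagates to longer products, since $\alpha\equiv 0$ in $H^{p,p}(X,\R)$ implies $\alpha\cdot\eta\equiv 0$ in $H^{p+1,p+1}(X,\R)$ for any $\eta\in H^{1,1}(X,\R)$ (directly from the definition of $\equiv$). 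Hence $N_{k(f)}(\go)^iN_{k(f)-1}(\go)^j\equiv 0$ whenever $i+j\ge m$.

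For the consequence, Theorem~\ref{JBbound} tells us $k(f)=2r$ is even, so the condition $2i+2j>2d-k(f)$ is equivalent to $i+j\ge d-r+1$. By the first part it suffices to show $(N^{k(f)}\go_0)^{d-r+1}\equiv 0$ for a K\"ahler class $\go_0$. I argue by contradiction: assuming $(N^{k(f)}\go_0)^{d-r+1}\not\equiv 0$, we deduce $(N^{k(f)}\go_0)^j\not\equiv 0$, and in particular $(N^{k(f)}\go_0)^j\ne 0$, for every $j\le d-r+1$ (since $\equiv 0$ propagates to higher powers). The class $N^{k(f)}\go_0$ is nef and $f^*$-invariant by Lemma~\ref{lem-N2i-1}, so I can form a quasi-nef sequence $M_1,\ldots,M_d$ with $M_1=\cdots=M_{d-r+1}=N^{k(f)}\go_0$, using Birkhoff--Perron--Frobenius iteratively to build the tail $M_{d-r+2},\ldots,M_d$ (the cones $\Nef(L_j)$ remain nontrivial because $L_{d-r+1}\not\equiv 0$ ensures $L_{d-r+1}\cdot\omega\ne 0$ for some K\"ahler $\omega$).

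Now Lemma~\ref{lem-equalM} applied to this sequence yields $F'_j=F_{j-1}$ for every $j\le d-r$. The jump indices $s_1>\cdots>s_r$ provided by Proposition~\ref{pro-filtr}(2) necessarily satisfy $F'_{s_l}\ne F_{s_l-1}$, hence each $s_l\ge d-r+1$. Since the $s_l$ are $r$ strictly decreasing positive integers, $s_1\ge s_r+r-1\ge d$, contradicting the bound $s_1\le d-1$ from Proposition~\ref{pro-filtr}(2). This gives $(N^{k(f)}\go_0)^{d-r+1}\equiv 0$ and completes the argument. The main obstacle I foresee is verifying the completion step past $M_{d-r+1}$ in the construction of the quasi-nef sequence, i.e., that the cones $\Nef(L_j)$ remain non-trivial and Birkhoff--Perron--Frobenius yields an invariant class expressible as $L_j\cdot M_{j+1}$; this is essentially routine but is the only part of the proof that is not purely algebraic bookkeeping.
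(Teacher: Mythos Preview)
Your argument is correct. The first part matches the paper's proof in spirit: both reduce to K\"ahler $\go$ and invoke Lemma~\ref{lem-equiv2N}. You do the reduction by multilinear expansion over $\go=\go_1-\go_2$, while the paper observes that the vanishing condition is Zariski closed in $H^{1,1}(X,\R)$ and that the K\"ahler cone is Zariski dense; either works.

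For the consequence, your route differs from the paper's. You take $d-r+1$ copies of $N^{k(f)}\go_0$ in a quasi-nef sequence, use Lemma~\ref{lem-equalM} to force $F'_j=F_{j-1}$ for $j\le d-r$, and then derive a contradiction from the pigeonhole constraint $s_1\ge s_r+(r-1)\ge d$ against $s_1\le d-1$. The paper instead takes only $d-\ell$ copies of $N_{k(f)}(\go)$ (writing $k(f)=2\ell$), and applies the ``In particular'' clause of Lemma~\ref{lem-N2i-1}(1) directly: $N_{k(f)}(\go)\in F'_{d-\ell}\subset F_{d-\ell}$, hence $L_{d-\ell}\cdot N_{k(f)}(\go)\equiv 0$, which is exactly $N_{k(f)}(\go)^{d-\ell+1}\equiv 0$. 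This avoids the contradiction and the extra copy, and does not rely on Lemma~\ref{lem-equalM}. Your approach is perfectly valid but slightly longer; the paper's is a one-line application of a lemma you did not invoke. Incidentally, your stated ``main obstacle'' about completing the quasi-nef sequence is not a genuine issue: the construction recorded before Proposition~\ref{pro-filtr} guarantees the extension to length $d$ via Birkhoff's theorem, so you need not worry about it.
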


\begin{proof}
	Since the vanishing $N_{k(f)}(\go)^iN_{k(f) -1}(\go)^j \equiv 0$
	is a Zariski closed condition for $\go \in H^{1,1}(X,\R)$
	and since the K\"ahler cone is Zariski dense in $H^{1,1}(X,\R)$, we can assume that
	$\go \in H^{1,1}(X,\R)$ is K\"ahler.
	Then the first statement follows from Lemma~\ref{lem-equiv2N}.
	
	Now we prove the second statement. Once again, we can assume that $\go$ is K\"ahler.
	Recall that $ k(f) = 2\ell$ is an even number (see \eg Theorem~\ref{JBbound}). 
	By the first statement, it suffices to show that
	$$N_{k(f)}(\go)^{d-\ell + 1} \equiv 0,$$
	as we already know that $N_{k(f)}(\go)^i \equiv 0$ if $2i > d$ by Lemma~\ref{lem-DP}.
	To this end, we can assume that $N_{k(f)}(\go)^{d-\ell } \not \equiv 0$
	and complete
	$$M_1 = \cdots = M_{d-\ell} \cnec N_{k(f)}(\go)$$
	to a quasi-nef sequence.
	Then Lemma~\ref{lem-N2i-1} implies that
	$$ N_{k(f)}(\go)= N_{2\ell}(\go) \in F'_{d-\ell} \subset F_{d-\ell}.$$
	Hence
	$$  N_{k(f)}(\go)^{d-\ell+1} = L_{d-\ell } N_{k(f)}(\go) \equiv 0.$$
\end{proof}

\section{Upper bounds of $\Plov(f)$:  beginning of the proof of Theorem~\ref{uniform}} \label{proofuniform}

Let us first prove Keeler's upper bound in Theorem~\ref{uniform} (1). 

\begin{pro}\label{pro-KUB}
	Let $X$ be a compact K\"ahler manifold of dimension $d$ and 
	let $f \in \Aut (X)$ be a zero entropy automorphism.
	Suppose that $k(f) > 0$. Then we have
	$$\Plov (f) \le k(f)(d-1) + d.$$	
\end{pro}
We will first provide a sketch of Keeler's original proof, 
then an alternative proof using Corollary~\ref{cor-vankk-1}. 

\begin{proof}[Keeler's proof of Proposition~\ref{pro-KUB}] Recall that $\Plov (f)$ is the degree of the polynomial $P_{f, \go}(n)$ which is the same as the polynomial $(\Delta_n(f, L)^d)$ in Theorem \ref{Keeler} (6) if we replace the ample class $L$ by the K\"ahler class $\go$.
	Therefore, by setting $D = \omega$ and $P = f^*$ in the proof of \cite[Lemma 6.13]{Ke}, the purely cohomological proof of \cite[Lemma 6.13]{Ke} works without any further change, which proves the result.
\end{proof}

\begin{proof}[Second proof of Proposition~\ref{pro-KUB}] 
	By Lemmas~\ref{lem-invfini} and~\ref{lem:equiv_dyn}, 
	we can assume that $f^* : H^{1,1}(X) \cto$ is unipotent. 	
	Let $\gaa \in H^{1,1}(X,\R)$ and let
	$$k(f) \ge i_1 \ge \cdots \ge i_d \ge 0$$
	be $d$ integers such that
	$$\sum_{j = 1}^d i_j > k(f)(d-1).$$
	Write the product $N_{i_1}(\gaa) \cdots N_{i_d}(\gaa)$
	in the form
	$$\Pi \cnec N_{k(f)}(\gaa)^a N_{k(f) - 1}(\gaa)^b N_{i_{a+b+1}}(\gaa) \cdots N_{i_{d}}(\gaa)$$
	with $i_{a+b+1} \le k(f)-2$. Then $2a + 2b > 2d - k(f)$ by the assumption.
	It follows from Corollary~\ref{cor-vankk-1} that $\Pi = 0$.
	Thus $\Plov(f) \le k(f)(d-1) + d$ by Lemma~\ref{lem-ubN_i}.
\end{proof}

The main result of this section is the following sharpened
upper bound of $\Plov(f)$.

\begin{theorem}\label{thm-uniform}
	Let $X$ be a compact K\"ahler manifold of dimension $d$ and 
	let $f \in \Aut (X)$ be a zero entropy automorphism.
	Assume that $d \ge 3$ and $k(f) > 0$. Then
	$$\Plov(f) \le  k(f)(d-1) + d - 2. $$
	When $k(f) = 2$, we have the optimal upper bound:
	$$\Plov(f) \le 
	\begin{cases}
		2d & \text{ if } d \text{ is even;} \\
		2d-1 & \text{ if } d \text{ is odd.}
	\end{cases}
	$$
\end{theorem}

The above inequality for $k(f) = 2$ was originally due to F. Hu with a different proof~\cite{FHu}.
We will prove Theorem~\ref{thm-uniform}
based on results in  Section~\ref{sec-filtr}
about the dynamical filtrations. 
Let us first prove Theorem~\ref{thm-uniform} when $k(f) = 2$. 

\begin{proof}[Proof of Theorem~\ref{thm-uniform} when $k(f) = 2$] 
	
	By Lemmas~\ref{lem-invfini} and~\ref{lem:equiv_dyn}, we can assume that $f^* : H^{1,1}(X) \cto$ is unipotent. 	
	Let $\go$ be a K\"ahler class and let $i$ be the largest integer such that $(N^2\go)^i \ne 0$.
	By Lemma~\ref{lem-DP}, we have $i \le \lfloor d/2 \rfloor$.
	Since $(N^2\go)^{i+1} = 0$, it follows from Corollary~\ref{cor-vankk-1} that
	$$(N_2\go)^a(N_1\go)^b \equiv 0$$
	whenever $a + b > i $. Hence by Lemma~\ref{lem-ubN_i},
	$$\Plov(f) \le d + 2i \le d + 2\lfloor d/2 \rfloor.$$
	
	For optimal examples,
	let $S$ be any compact K\"ahler surface and $f \in \Aut(S)$ any automorphism with $k(f) = 2$
	(see e.g.~\cite[\S 4.1]{DLOZ} for an example where $S$ is a torus).
	Then $\Plov(f) = 4$ by Corollary~\ref{cor-d=3}.
	If $d = 2m$, then $k(f^{\times m})  =2$ for $f^{\times m} \in \Aut(S^m)$ by Lemma~\ref{lem-kprod} and
	$\Plov(f^{\times m}) = 4m = 2d$ by Lemma~\ref{lem-pruduct}.
	If $d = 2m + 1$, then we consider $f^{\times m} \times \Id_C \in \Aut(S^m \times C)$ where $C$ is any smooth projective curve.
\end{proof}

The proof of Theorem~\ref{thm-uniform} when $k(f) > 2$ follows from a different argument.
In Lemmas~\ref{lem-vank-2d} and \ref{lem-vank-2d4} below, let $X$ be a compact K\"ahler manifold of dimension $d \ge 1$ and $f \in \Aut(X)$ an automorphism such that $f^* : H^{1,1}(X) \cto$ is unipotent.

\begin{lem}\label{lem-vank-2d}
	Assume $k(f) > 0$.
	Let $(a,b)$ be a pair of non negative integers such that $2a + 2b \ge 2d-k(f)$. 
	Let $i_1 \ge \cdots \ge i_{d'} \ge 0$ be $d'$ integers.
	When $2a + 2b = 2d-k(f)$ we assume 
	$$\sum_{j = 1}^{d'} i_j > (k(f)-4)d' + 2.$$
	Then
	$$N_{k(f)}(\gaa)^a N_{k(f) - 1}(\gaa)^b N_{i_1}(\gaa) \cdots N_{i_{d'}}(\gaa) \equiv 0$$
	for every $\gaa \in H^{1,1}(X,\R)$.
\end{lem}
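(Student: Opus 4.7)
The plan is to reduce to the K\"ahler case and then reformulate the vanishing as a combinatorial matching problem inside the filtration attached to a carefully chosen quasi-nef sequence. Since the product is polynomial in $\alpha \in H^{1,1}(X,\R)$ and top-degree classes identify with $\R$, Zariski density of the K\"ahler cone lets me assume $\alpha$ is K\"ahler. Setting $r \cnec k(f)/2 = d'$ and $\Pi \cnec N_{k(f)}(\alpha)^a N_{k(f)-1}(\alpha)^b$, the case $\Pi \equiv 0$ is immediate: the product with any $d'$ further classes in $H^{1,1}(X,\R)$ is $\equiv 0$, hence $=0$ in top degree. So I may assume $\Pi \not\equiv 0$.

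First I would extend $M_1=\cdots=M_a \cnec N_{k(f)}(\alpha)$ and $M_{a+1}=\cdots=M_{a+b} \cnec N_{k(f)-1}(\alpha)$ to a quasi-nef sequence $M_1,\ldots,M_d$, using Lemma~\ref{lem-N2i-1} for nefness and $f^*$-invariance, together with $\Pi \not\equiv 0$ to ensure $L_j \not\equiv 0$ throughout the prefix. Proposition~\ref{pro-filtr}(1), combined with Corollary~\ref{cor-vankk-1} and Lemma~\ref{lem-equiv2N}, forces $L_{j-1}M_j^2 \not\equiv 0$ for $j \le a+b-1$ (both sides are products of at most $a+b$ elements of $\gS$) but $L_{a+b-1}M_{a+b}^2 \equiv 0$ (an $(a+b+1)$-product, vanishing by Corollary~\ref{cor-vankk-1} since $2(a+b+1) > 2d-k(f)$). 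Consequently $F'_j = F_{j-1}$ precisely for $j \le a+b-1$, and the strictly decreasing indices $s_1 > \cdots > s_r$ of Proposition~\ref{pro-filtr}(2) must fill the $r$-element set $\{a+b,\ldots,d-1\}$, yielding $s_j = d-j$ and placing each $N_{i_j}(\alpha)$ in $F'_{d-\lceil i_j/2\rceil}$ via Lemma~\ref{lem-N2i-1}(1) (with the convention $F'_d = H^{1,1}(X,\R)$ when $i_j=0$).

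The core of the argument is to produce a bijection $\sigma : \{1,\ldots,d'\} \to \{a+b+1,\ldots,d\}$ with $\sigma(j) \ge d-\lceil i_j/2\rceil$ for every $j$ (validity, so that setting $\eta_{\sigma(j)} \cnec N_{i_j}(\alpha)$ gives $\eta_i \in F'_i$) and $\sigma(j_0) > d-\lceil i_{j_0}/2\rceil$ for at least one $j_0$ (slack). Since $\sum_j\sigma(j)$ is forced by the codomain to equal $r(2d-r+1)/2$, the total slack reduces to $\sum_j\lceil i_j/2\rceil - r(r-1)/2$, independent of $\sigma$. From the hypothesis $\sum i_j > 2r^2-4r+2$ I obtain $\sum_j\lceil i_j/2\rceil \ge (r-1)^2+1 > r(r-1)/2$, so positive total slack is guaranteed. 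Hall's marriage theorem produces the bijection once $\lceil i_\ell/2\rceil \ge r-\ell$ is verified for every $\ell \in \{1,\ldots,r-1\}$; failure at some $\ell_0$ would bound $\sum i_j \le 2r(\ell_0-1) + 2(r-\ell_0)^2 - 2$, a quantity that stays $\le 2r^2-4r+2$ throughout this range by a short algebraic verification, contradicting the hypothesis.

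Fixing such a $\sigma$ together with a slack index $j_0$, Lemma~\ref{lem-vanprod}(1) yields
$\Pi\cdot\prod_{j=1}^{d'} N_{i_j}(\alpha) = L_{a+b}\prod_{i=a+b+1}^{d}\eta_i \equiv CL_d$.
Inspection of the proof of Lemma~\ref{lem-vanprod}(1) shows that $C$ factors through coefficients $C_i$ coming from the decomposition $\eta_i = C_iM_i + (\text{element of }F_{i-1})$, and $C_i$ vanishes whenever $\eta_i \in F_{i-1}$; the slack at $j_0$ forces $\eta_{\sigma(j_0)} = N_{i_{j_0}}(\alpha) \in F_{\sigma(j_0)-1}$, so $C=0$ and the product is $\equiv 0$, which in top degree coincides with $=0$. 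The main obstacle is showing that the specific arithmetic bound $\sum i_j > (k(f)-4)d'+2$ simultaneously yields both Hall's marriage condition and positive slack: both constraints become close to tight precisely at this threshold, so a weaker hypothesis would fail one of them.
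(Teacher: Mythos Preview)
Your proof is correct, but it takes a considerably more elaborate route than the paper's. After the same reduction to K\"ahler $\alpha$, $\Pi \not\equiv 0$, and the same quasi-nef sequence, the paper proceeds in two short steps: first it disposes of the case $i_1 \ge k(f)-1$ directly via Corollary~\ref{cor-vankk-1} (since then $N_{k(f)}(\alpha)^aN_{k(f)-1}(\alpha)^bN_{i_1}(\alpha)$ already has $a+b+1$ factors from $\gS$); then, assuming $i_1 \le k(f)-2$, a one-line counting argument shows that the hypothesis $\sum i_j > (k(f)-4)d'+2$ forces $i_1,i_2 \in [k(f)-3,\,k(f)-2]$, so both $N_{i_1}(\alpha)$ and $N_{i_2}(\alpha)$ lie in $F'_{a+b+1}$, and Lemma~\ref{lem-vanprod}(2) with $p=a+b+1$ finishes immediately. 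No determination of the full sequence $(s_j)$, no matching, no Hall's theorem.

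Your approach instead pins down $s_j = d-j$ exactly, assigns \emph{all} $d'$ factors $N_{i_j}(\alpha)$ to filtration levels via a bijection $\sigma$, and uses Hall's marriage theorem plus a total-slack count to guarantee one factor lands strictly below its assigned $F'$-level. This is a genuine and self-contained argument, and it has the minor advantage of not needing to split off the case $i_1 \ge k(f)-1$. But the key insight you are missing is that the hypothesis is strong enough to force a collision already at the \emph{lowest} level $F'_{a+b+1}$: two factors must land there, which is all Lemma~\ref{lem-vanprod}(2) needs. The paper's argument exploits this to avoid the combinatorial machinery entirely. Your detour through the factorisation $C = \prod C_i$ inside the proof of Lemma~\ref{lem-vanprod}(1) is also unnecessary; one can instead apply part~(2) directly with $p = \sigma(j_0)-1$ and $\eta = \eta_{\sigma(j_0)} \in F_{\sigma(j_0)-1}$, then multiply by the remaining $(1,1)$-classes.
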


\begin{proof}
	If $2a + 2b > 2d-k(f)$, then
	we already have
	$$N_{k(f)}(\gaa)^a N_{k(f) - 1}(\gaa)^b \equiv 0$$
	by Corollary~\ref{cor-vankk-1}.
	So we can assume that $2a + 2b = 2d-k(f)$, and 
	that $i_1 \le k(f) - 2$.
	
	We can also assume that
	$N_{k(f)}(\gaa)^a N_{k(f) - 1}(\gaa)^b  \not\equiv 0$ and that $\gaa \in H^{1,1}(X,\R)$ is K\"ahler.
	By Lemma \ref{lem-N2i-1}, both $N_{k(f)}(\gaa)$ and $ N_{k(f) - 1}(\gaa)$ are $f^*$-invariant nef,
	so we can complete
	$$M_1 = \cdots = M_a \cnec N_{k(f)}(\gaa)  , \ \ M_{a+1} = \cdots = M_{a+b} \cnec N_{k(f)-1}(\gaa)$$
	to a quasi-nef sequence.
	
	Since $\sum_{j = 1}^{d'} i_j > (k(f)-4)d' + 2,$
	we have
	$$i_1,i_2 \in [k(f)-3, k(f)-2].$$	
	Indeed,
	otherwise we would have $i_2 \le k(f) - 4$ and
	$\sum_{j = 1}^{d'} i_j \le (k(f) - 2) + (d'-1)(k(f) - 4)
	= (k(f)-4)d' + 2.$
	It follows from Lemma~\ref{lem-N2i-1} that
	$$N_{i_1}(\gaa), N_{i_{2}}(\gaa) \in  F'_{d-\frac{k(f)}{2} + 1 } = F'_{a + b + 1 }.$$
	So
	$$N_{k(f)}(\gaa)^a N_{k(f) - 1}(\gaa)^bN_{i_1}(\gaa) N_{i_{2}}(\gaa)
	= L_{a+b}N_{i_1}(\gaa) N_{i_{2}}(\gaa)  \equiv 0$$	
	by Lemma~\ref{lem-vanprod} (2), which proves Lemma~\ref{lem-vank-2d}.
\end{proof}

\begin{proof}[End of proof of Theorem~\ref{thm-uniform}]
	
	Recall that $k(f)$ is an even number (Theorem~\ref{JBbound}), and we already 
	proved the statement for $k(f) =2$. 
	It remains to prove Theorem~\ref{thm-uniform} for $k(f) \ge 4$.
	
	By Lemmas~\ref{lem-invfini} and~\ref{lem:equiv_dyn}, 
	we can assume that $f^* : H^{1,1}(X) \cto$ is unipotent. 	
	Let $\gaa \in H^{1,1}(X,\R)$ and let
	$$k(f) \ge i_1 \ge \cdots \ge i_d \ge 0$$
	be $d$ integers such that
	$$\sum_{j = 1}^d i_j > k(f)(d-1) - 2.$$
	Then the product $N_{i_1}(\gaa) \cdots N_{i_d}(\gaa)$
	is of the form
	$$\Pi \cnec N_{k(f)}(\gaa)^a N_{k(f) - 1}(\gaa)^b N_{i_{a+b+1}}(\gaa) \cdots N_{i_{d}}(\gaa)$$
	with $i_{a+b+1} \le k(f) - 2$.
	We now show that
	$\Pi = 0$.
	We have
	$$(a + b)k(f) + \sum_{j = a + b + 1}^di_j \ge  \sum_{j = 1}^{a + b}i_j + \sum_{j = a + b + 1}^di_j
	= \sum_{j = 1}^d i_j > k(f)(d-1) - 2.$$
	So if $d' \cnec d - a - b$, then
	$$ d'(k(f) - 2) \ge \sum_{j = a + b + 1}^di_j  > k(f)(d-a-b-1) - 2 = k(f)d' - k(f) - 2, $$
	which implies $k(f) + 2 > 2d'$. As $k(f)$ is even, we have $k(f) \ge 2d'$,
	namely $2a + 2b \ge 2d - k(f)$.
	Assume that $2a + 2b = 2d - k(f)$, namely $2d' = k(f)$, then
	since $2d' =  k(f) \ge 4$ by assumption, we have
	$$\sum_{j = a + b + 1}^di_j > k(f)d' - k(f) - 2 \ge (k(f)-4)d' + 2.$$
	It follows from Lemma~\ref{lem-vank-2d} that $\Pi = 0$,
	and thus Theorem~\ref{thm-uniform} follows from Lemma~\ref{lem-ubN_i}. 
\end{proof}

We finish this section by the following upper bound
of $\Plov(f)$ when $d \ge 4$, 
which improves the upper bound $\Plov(f) \le 2d^2-3d$ obtained by
combining Theorem~\ref{thm-uniform} and $k(f) \le 2d - 2$ in Theorem~\ref{JBbound}.

\begin{pro}\label{uniformdge4}
	Let $X$ be a compact K\"ahler manifold of dimension $d \ge 4$ and let $f \in \Aut (X)$ such that $d_1(f) =1$.
	Then
	$$
	\Plov(f) \le 2d^2-3d - 2.
	$$
\end{pro}

We first prove the following.

\begin{lem}\label{lem-vank-2d4}
	Assume that $k \cnec k(f) = 2d-2$ and $d \ge 4$. Take $d$ integers 
	$$k \ge i_1 \ge \cdots \ge i_d \ge 0$$
	such that
	$$\sum_{j = 1}^d i_j \ge  (k-2)d-1.$$
	Then for every $\gaa \in H^{1,1}(X)$, we have
	\begin{equation}\label{van-dge4}
		N_{i_1}(\gaa) \cdots N_{i_{d}}(\gaa) = 0.	
	\end{equation}
\end{lem}

\begin{proof}
	First we assume that $i_1 \ge k-1$. We have
	\begin{equation}
		\begin{split}
			\sum_{j = 2}^d i_j & = \(\sum_{j = 1}^d i_j\)  - i_1
			> (k-2)d-1 - k = 2d^2 -6d + 1 \\
			& \ge 2d^2 - 8d + 8 = (k - 4)(d-1) + 2,\end{split}
	\end{equation}
	where the second inequality follows from $d \ge 4$.
	So $N_{i_1}(\gaa) \cdots N_{i_{d}}(\gaa) = 0$ by Lemma~\ref{lem-vank-2d}.
	
	Assume that $i_1 \le k-2$.
	Since $\sum_{j = 1}^d i_j \ge (k-2)d - 1$ and the sequence $i_j$ is decreasing,
	necessarily
	$$i_1 = \cdots = i_{d-1} = k-2 \ \ \ \text{ and } \ \ \  i_d = k-2 \text{ or } k-3.$$
	Since we have already proven that $N_{j_1}(\gaa) \cdots N_{j_{d}}(\gaa) = 0$
	whenever $j_1 \ge k - 1$, in particular whenever
	$$\sum_{l = 1}^d j_l > (k-2)d,$$
	we have
	\begin{equation}\label{eqn-Pganmod}
		\begin{split}
			Q_{f,\gaa,d}(n)
			& \cnec \(\sum_{i = 0}^k \binom{n}{i}N_i(\gaa) \)^d \\
			& =_{n \to \infty} \binom{n}{k-2}^d N_{k-2}(\gaa)^d + d\binom{n}{k-2}^{d-1}\binom{n}{k-3} N_{k-2}(\gaa)^{d-1}N_{k-3}(\gaa) \\
			& + O(n^{(k-2)d - 2}).
		\end{split}
	\end{equation}
	Recall that  $\deg(Q_{f,\gaa,d}) \le d(d-1)$ by Lemma~\ref{lem-ubdpol}. 
	Since
	$d(d-1) \le (k-2)d - 2$ (because $d \ge 4$),
	it follows from~\eqref{eqn-Pganmod} that
	$N_{k-2}(\gaa)^d = 0$
	and then
	$N_{k-2}(\gaa)^{d-1}N_{k-3}(\gaa) = 0$,
	which proves Lemma~\ref{lem-vank-2d4}.
\end{proof}

\begin{proof}[Proof of Proposition~\ref{uniformdge4}]
	By Theorem~\ref{JBbound}, we have $k(f) \le 2d-2$ and $k(f)$ is even.
	Since $d \ge 4$, Proposition~\ref{uniformdge4}
	in the case $k(f) < 2d-2$ (resp. $k(f) = 2d-2$) follows from Theorem~\ref{uniform}
	(resp. Lemmas~\ref{lem-ubN_i} and \ref{lem-vank-2d4}).
\end{proof}

\section{A refined lower bound: end of the proof of Theorem~\ref{uniform} and Corollary~\ref{cor-kod}}\label{sec-lb}

In this section we prove the following
lower bound of $\Plov(f)$.
At the end we will conclude the proof of Theorem~\ref{uniform}
together with Corollary~\ref{cor-kod}.

\begin{thm}\label{thm-lb}
	Let $X$ be a compact K\"ahler manifold of dimension $d > 0$ and 
	let $f \in \Aut (X)$ be a zero entropy automorphism.
	Then we have 
	$$\Plov(f) \ge d + 2k(f) - 2.$$
\end{thm}

\begin{proof}
	
	We can assume that $\dim X \ge 2$, otherwise $k(f) = 0$, and Theorem~\ref{thm-lb} holds trivially.
	
	By Lemmas~\ref{lem-invfini} and~\ref{lem:equiv_dyn},
	we can assume that $f^* : H^{1,1}(X) \cto$ is unipotent. 
	Let $\go$ be a K\"ahler class. Recall that we have
	$$\gD_n'(f,\go) \cnec \sum_{i= 0}^n \((f^i)^*\go + (f^{-i})^*\go\)= \sum_{i = 0}^{k(f)} \binom{n + 1}{i+1} N_i(\go)$$
	from the computation in the proof of Lemma~\ref{lem-ubN_i}.
	By Lemma~\ref{lem-sdPbis}, and using the notations therein, we have
	$$\Plov(f) = \deg_{n} P'_{f,\go,d}(n) > \deg_{n} P'_{f,\go,d-1}(n) > \cdots > \deg_{n} P'_{f,\go,2}(n).$$
	Therefore it suffices to show that 
	$$\deg_{n} P'_{f,\go,2}(n) \ge 2k(f).$$
	
	Recall that
	\begin{equation}\label{eqn-P'2}
		P'_{f,\go,2}(n) = \Delta'_n(f, \go)^2 \go^{d-2} = \(\sum_{j=0}^{k(f)}{n + 1\choose j+1} N_j\go\)^2\go^{d-2}.
	\end{equation}
	Assume that $(N_{k(f)}\go)^2 \ne 0$. Since $N_{k(f)}\go$ 
	is nef by Lemma~\ref{lem-N2i-1} (3), we have $(N_{k(f)}\go)^2 \cdot \go^{d-2} \ne 0$. 
	Hence $\deg_{n} P'_{f,\go,2}(n) \ge 2k(f) + 2$ by~\eqref{eqn-P'2}.
	
	Now assume that $(N_{k(f)}\go)^2 = 0$. 
	Then
	\begin{equation}\label{van-k-1k}
		(N_{{k(f)}-1}\go)^2 \equiv 0, \ \ (N_{k(f)}\go)(N_{{k(f)}-1}\go) \equiv 0
	\end{equation}
	by Lemma~\ref{lem-equiv2N}.
	Since $N_{k(f)}\go$ is nef and $f^*$-invariant by Lemma~\ref{lem-N2i-1},
	we can construct a quasi-nef sequence $M_1,\ldots,M_d$ with $M_1 = N_{k(f)}\go$.
	Suppose that $(N_{k(f)}\go)(N_{{k(f)}-2}\go) \equiv 0$. Then $(N_{{k(f)}-2}\go) \in F_1$, 
	and we would have
	$(N_{{k(f)}}\go) \in F_0 = 0$ by Lemma~\ref{lem-N2i-1} (1), which contradicts the assumption that $N_{k(f)}\go \ne 0$. Hence $(N_{k(f)}\go)(N_{{k(f)}-2}\go) \not\equiv 0$. 
	Together with the vanishings~\eqref{van-k-1k} and~\eqref{eqn-P'2},
	we obtain $\deg_{n} P'_{f,\go,2}(n) = 2k(f)$.
\end{proof}

\begin{proof}[Proof of Theorem~\ref{uniform}]
	
	The upper bound and lower bound of $\Plov(f)$ in Theorem~\ref{uniform}
	follows from Theorems~\ref{thm-uniform} and \ref{thm-lb} respectively.
\end{proof}

\begin{proof}[Proof of Corollary~\ref{cor-kod}]
	The main statement of Corollary~\ref{cor-kod} follows from Theorem~\ref{uniform}
	(resp. Theorem~\ref{uniform}) when $k(f) > 0$ (resp. $k(f) = 0$).
	Together with Theorem~\ref{JBbound}, it follows that $\kappa (X) \ge d/2$ implies $\Plov (f) \le d^2-2$.
\end{proof}

\section{Complex tori: Proof of Theorem \ref{thm51} and a few remarks} \label{sec-MainTheorem3}

In this section, we prove Theorem~\ref{thm51}; 
see Remark~\ref{rem51} for further discussion.

\begin{proof}[Proof of Theorem~\ref{thm51}]
	
	First we perform some reduction.
	By Lemmas~\ref{lem-invfini} and~\ref{lem:equiv_dyn}, up to replacing $f$
	by some finite iteration of it, we can assume that $f^* : H^{1,0}(X)\cto$ is unipotent.
	Fix a basis
	$$dz_{1,1},dz_{1,2},\ldots,dz_{1,k_1},\ldots,dz_{p,1},\ldots,dz_{p,k_p}$$
	of $H^{1,0}(X)$ such that for every $i= 1,\ldots,p$,
	$$
	f^*dz_{i,j} =
	\begin{cases}
		dz_{i,1} \text{ if } j = 1 \\
		dz_{i,j} +  dz_{i,j-1} \text{ if } 2 \le j \le k_i.
	\end{cases}
	$$
	As the $f^*$-action on $H^{1,0}(X)$ 
	determines the $f^*$-action on $H^\bullet(X,\C)$ when $X$ is a torus (because $H^\bullet(X,\C)$ is generated by
	$H^1(X,\C) = H^{1,0}(X) \oplus \ol{H^{1,0}(X)}$), 
	by Corollary~\ref{cor-plovinv}
	we can assume that $X = E^d$ with $E$ being an elliptic curve, or even $E = \C/\Z[\sqrt{-1}]$, and $(z_{i,j})_{1 \le i \le p,  2 \le j \le k_i}$ are the global coordinates of $E^d$, so that
	$$f : E^d = \prod_{i = 1}^p E^{k_i} \to  \prod_{i = 1}^p E^{k_i} = E^d$$
	is the product of $E^{k_i} \to E^{k_i}$ defined by the unipotent Jordan matrix of size $k_i$.
	
	By the product formula (Proposition~\ref{prop1-Kahler} (4)), it suffices to prove Theorem~\ref{thm51} for the case $p = 1$. So, from now on until the end of proof, we assume that $p=1$. Namely $f^* : H^{1,0}(X)\circlearrowleft$ has only one Jordan block.
	
	Set $e_i = dz_{i}$ and $\bar{e}_i = d\bar{z}_{i}$.
	For every $\gs = \sum_{i,j} a_{ij}e_i \wedge \bar{e}_j
	\in H^{1,1}(X,\C) \setminus \{0\}$, define
	$$w(\gs) \cnec \max\{i+j \mid a_{ij} \ne 0\},$$
	and for every $p = 2,\ldots, 2d$, define
	$$\gs(p) \cnec \sum_{i + j = p} a_{ij}e_i \wedge \bar{e}_j.$$
	
	Note that $\sum_{i = 1}^d w_i (\gs_i) \le d(d+1)$ by definition. We need the following.	
	\begin{lem}\label{lem-van}
		Let $\gs_1,\ldots,\gs_d \in H^{1,1}(X,\C) \setminus \{0\}$ and let $w_i \cnec w(\gs_i)$.
		\begin{enumerate}
			\item If $\sum_{i = 1}^d w_i < d(d+1)$, then $\gs_1 \wedge \cdots \wedge \gs_d = 0$.
			\item If $\sum_{i = 1}^d w_i = d(d+1)$, then $\gs_1 \wedge \cdots \wedge \gs_d = \gs_1({w_1}) \wedge \cdots \wedge \gs_d({w_d})$.
		\end{enumerate}
	\end{lem}
	
	\begin{proof} By multi-linearity of $\gs_1 \wedge \cdots \wedge \gs_d$, it is clear that (1) implies (2), and that it suffices to prove (1) for $\gs_1,\ldots,\gs_d$ of the form $\gs_{i} = e_{i_1} \wedge \bar{e}_{j_1},\ldots, \gs_d = e_{i_d} \wedge \bar{e}_{j_d}$. If $\gs_1 \wedge \cdots \wedge \gs_d \ne 0$, then necessarily
		$$\{i_1,\ldots,i_d\} = \{1,\ldots,d\} = \{j_1,\ldots,j_d\},$$
		so $\sum_{i = 1}^d w_i = d(d+1)$.
	\end{proof}
	
	We return to the proof of Theorem \ref{thm51}.
	Let $N \cnec f^* - \Id$  and let
	$$\go \cnec \sqrt{-1} \sum_{i=1}^d e_i \wedge \bar{e}_i,$$
	which is a K\"ahler class on $X$.
	For every $q = 0,\ldots, 2d-2$, by induction on $q$ we have
	$$(N^q\go)(p) = 0$$
	for every $p > 2d-q$ and
	$$(N^q\go)(2d-q) = \sqrt{-1}(N^q(e_d \wedge \bar{e}_d))(2d-q) =
	\sqrt{-1}\sum_{i + j = q}\binom{q}{i}e_{d-q+i} \wedge \bar{e}_{d-q+j} \ne 0.$$
	Therefore,
	$$w(N^q\go) = 2d-q.$$
	Let $q_1,\ldots,q_d \ge 0$ be non-negative integers. If $\sum_{i=1}^d q_i > d^2 - d$, then by Lemma~\ref{lem-van}
	$$(N^{q_1}\go) \cdots (N^{q_d}\go) = 0,$$
	so $\Plov(f) \le d^2$ by~\eqref{eqn-PlovNil}.
	
	It remains to prove that $\Plov(f) \ge d^2$. Note that since $\go({2d}) = \sqrt{-1} e_d \wedge \bar{e}_d$ is nef,
	by Lemma~\ref{lem-nefle} we have
	$\Plov(f) \ge \Plov(f,\go({2d}))$.
	Until the end of the proof, we formally define $e_i \wedge \bar{e}_j = 0$
	whenever $i$ and $j$ are integers such that $i \notin [1,d]$ or $j \notin [1,d]$.
	
	\begin{claim}\label{claim-Pascalpyr}
		We have
		$$N^q(e_d \wedge \bar{e}_d) =
		\sum_{i + j \le q}\binom{q}{i,j,q-i-j}e_{d-q+i} \wedge \bar{e}_{d-q+j},$$
	\end{claim}
	\begin{proof}
		Let $V \cnec \C[X,Y]/(X^d,Y^d)$.
		We have an isomorphism of $\C$-vector spaces
		$V \simeq H^{1,1}(X)$ sending each
		$X^iY^j$ to $e_{d-i} \wedge {\bar e}_{d-j}$. Under this isomorphism, $N: H^{1,1}(X) \to H^{1,1}(X)$ becomes
		$$
		\begin{aligned}
			N : V & \to V \\
			P & \mapsto (XY + X + Y)P  \mod (X^d , Y^d),
		\end{aligned}
		$$
		so
		\begin{equation}\label{eqn-Nq(1)}
			N^q(1) = (XY + X + Y)^q = \sum_{i + j \le q}\binom{q}{i,j,q-i-j}X^{q-i}Y^{q-j} \mod (X^d , Y^d).
		\end{equation}
		Translating~\eqref{eqn-Nq(1)} back to $N: H^{1,1}(X) \to H^{1,1}(X)$
		proves the claim.
	\end{proof}
	
	For every integer $n > 0$,
	by Claim~\ref{claim-Pascalpyr} we have
	$$\gO \cnec
	\sum_{q=0}^{2d-2}{n\choose q+1} N^q(\go({2d}))
	= \sqrt{-1} \sum_{q=0}^{2d-2}  \sum_{i + j \le q}
	{n\choose q+1}\binom{q}{i,j,q-i-j}e_{d-q+i} \wedge \bar{e}_{d-q+j}.$$
	For each pair of integers $1 \le i,j \le d$,
	define the polynomial $P_{i,j}(n)$ in $n$ by
	\begin{equation}\label{eqn-defPij}
		\gO =
		\sqrt{-1} \sum_{1 \le i , j \le d}
		P_{i,j}(n) e_{i} \wedge \bar{e}_{j}.
	\end{equation}
	
	\begin{claim}\label{claim-deglc}
		The polynomial $P_{d-i,d-j}(n)$ in $n$ has degree
		$i + j + 1$ and leading coefficient
		$$\frac{1}{(i+j+1)!}
		\binom{i+j}{i}.$$
	\end{claim}
	
	\begin{proof}
		As $e_{d-i} \wedge \bar{e}_{d-j} = e_{d-q + (q-i)} \wedge \bar{e}_{d-q + (q-j)}$, 
		by construction we have (with $q$ varying in the sum)
		$$P_{d-i,d-j}(n) =
		\sum_{(q-i)+(q-j) \le q} {n\choose q+1}\binom{q}{q-i,q-j,i+j-q}.$$
		So the degree and the leading coefficient of $P_{d-i,d-j}$, are equal to 
		those of the polynomial ${n\choose q+1}\binom{q}{q-i,q-j,i+j-q}$ in $n$ when $q$ is maximal and satisfying $(q-i)+(q-j) \le q$ (that is, when $q = i+j$). This proves the claim.
	\end{proof}
	
	By~\eqref{eqn-defPij}, we have
	$$\gO^d = (\sqrt{-1})^d d! P(n)  (e_1 \wedge \bar{e}_1)  \wedge \cdots  \wedge (e_d \wedge \bar{e}_d)$$
	where $P(n)$ is the determinant of the matrix $(P_{i,j}(n))_{1 \le i,j\le d}$.
	By Claim~\ref{claim-deglc},
	we have $\deg_n(P(n)) \le d^2$ and the coefficient in front of $n^{d^2}$ is $\det M$,
	where $M = (M_{i+1, j+1})_{0 \le i,j \le d-1}$ is the $(d \times d)$ matrix defined by
	$$M_{i+1,j+1} = \frac{1}{(i+j+1)!} \binom{i+j}{i} = \frac{1}{i!j!} \cdot \frac{1}{(i+j+1)} , \,\,\, 0 \le i,j \le d-1.$$
	We have
	$$\det M = \frac{1}{\(\prod_{p=0}^{d-1}p!\)^2} \det\(\frac{1}{(i+j+1)}\)_{0 \le i,j\le d-1} =  \frac{\prod_{p=0}^{d-1}p!}{\prod_{p=d}^{2d-1}p!} \ne 0,$$
	where the second equality follows from
	the determinant of the Hilbert matrix (see \eg~\cite[(1.1)]{CauchyId}).
	Since $\Plov(f,\go({2d})) = \deg_n(P(n))$ by~\eqref{eqn-abel} and the definition of $\gO$,
	it thus follows that
	$$\Plov(f) \ge \Plov(f,\go({2d})) = \deg_n(P(n)) = d^2 .$$
	
	This completes the proof of the main statement of Theorem~\ref{thm51}.
	The optimality of the upper bound is provided by Example~\ref{ex-prod_E} below.
\end{proof}

\begin{example}\label{ex-prod_E}
	Let $E$ be a complex elliptic curve and let
	$X = E^d$. Define $f:  X  \to X$ by 
	\begin{equation}
		(x_1, \dots, x_d)  \mapsto (x_1, x_2+x_1, \dots, x_d+x_{d-1})
	\end{equation}
	Then $f^* : H^{1,0}(X)\cto$ is represented by the $(d \times d)$-Jordan matrix, and 
	$\Plov(X, f) = d^2$
	as a consequence of the main statement of Theorem~\ref{thm51}.
\end{example}

\begin{remark}\label{rem51}
	\hfill
	\begin{enumerate}	
		\item Consider $f \in \Aut (E^3)$ with $f(x_1,x_2, x_3) = (x_1, x_1+x_2, x_3)$.
		Then $f^* : H^{1,0}(E^3)\cto$ has two Jordan blocks, of sizes $2$ and $1$ respectively.
		By Theorem \ref{thm51}, we have
		$\Plov (f) =  2^2 + 1^2 = 5$,
		which is also consistent with \cite[Example 6.14]{Ke}.
		\item The upper bound in Theorem~\ref{thm51} 
		is also asserted in the proof of~\cite[Proposition 4.3]{CO} (without optimality). 
		However, the estimates (4.6)-(4.7) using $\ell_1$ in their proof have to be suitably modified, 
		otherwise
		as we can see that if $f$ is the identity, the estimate 
		$\Vol(\Gamma(n)) \le C n^{\ell_1^2}$ in~\cite[(4.6)-(4.7)]{CO}
		would imply that $\Vol(\Gamma(n))$ grows at most linearly in $n$,
		which contradicts the equality $\Plov(X) = d$.
	\end{enumerate}
\end{remark}

Finally, note that in this paper, whenever 
we prove that $\Plov(f)$ is bounded from above by some constant $C$
for an automorphism $f \in \Aut(X)$ such that $f^* : H^{1,1}(X) \cto$ is unipotent,
we actually prove that the right hand side of the inequality in 
Lemma~\ref{lem35} or \ref{lem-ubN_i} is bounded by $C$.
In view of Question~\ref{mainquest2}, we ask the following.

\begin{question}\label{que-mainquest2St}
	Let $X$ be a compact K\"ahler manifold of dimension $d \ge 1$ and 
	let $f \in \Aut (X)$ such that $f^* : H^{1,1}(X) \cto$ is unipotent.
	For every $\go \in H^{1,1}(X)$, do we have
	$$(N^{i_1}\go)\cdots(N^{i_d}\go) = 0$$
	and
	$$(N_{i_1}\go)\cdots(N_{i_d}\go) = 0$$
	whenever the $i_j$ are non-negative integers satisfying
	$\sum_{j = 1}^d i_j > d(d-1)$?  
\end{question}

\section{Some explicit examples} \label{sec-MainTheorem2}

We know that $\Plov (f) = 1$ for a compact Riemann surface, $\Plov (X, f) = 2$ or $4$ for a compact K\"ahler surface by Corollary~\ref{cor-d=3}, 
and $\Plov (X, f) = d$ for a projective variety $X$ of dimension $d$ whose desingularization $\tilde{X}$ is of general type (as $|{\rm Bir}\, (\tilde{X})| < \infty$ by \cite[Corollary 14.3]{Ueno}). 
Besides complex tori and these three cases,
we can also determine $\Plov (f)$ 
for some other classes of compact K\"ahler manifolds $X$ 
(Proposition~\ref{proHK}).
We  also prove Corollary~\ref{cor-JBboundkd} (2) in this section.

\begin{pro}\label{proHK}
	\hfill
	\begin{enumerate}
		\item Let $X$ be a compact hyper-K\"ahler manifold of dimension $2d$ and let $f \in \Aut (X)$ such that $d_1(f) = 1$. Then
		$\Plov (f) = 2d$ if $f$ is of finite order, and $\Plov (f) = 4d$ if $f$ is of infinite order; both cases are realizable, with $X$ projective. 
		\item Let $X$ be a smooth projective variety whose nef cone is a finite rational polyhedral cone. Let $\dim X = d$ and $f \in \Aut (X)$. Then $f$ is quasi-unipotent and $\Plov(f) = d$. In particular, this is the case when $X$ is a Mori dream space, especially when $X$ is a toric variety or a Fano manifold.
	\end{enumerate}
\end{pro}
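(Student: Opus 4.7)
The plan for part (1) is to reduce the computation of $\Plov(f)$ to a quadratic-form computation via the Fujiki relation for the Beauville--Bogomolov--Fujiki (BBF) form $q$ on $H^2(X,\R)$. If $f$ has finite order, then $f^*|_{H^{1,1}(X)}$ is of finite order, so $k(f) = 0$ and Theorem~\ref{thmkeeler} gives $\Plov(f) = 2d$. For $f$ of infinite order with $d_1(f) = 1$, Proposition~\ref{prop1-Kahler}(3) and Lemma~\ref{lem:equiv_dyn} allow us to replace $f$ by a positive power and assume $f^*$ is unipotent on $H^2(X,\R)$. I will invoke two standard facts about hyper-K\"ahler manifolds: the kernel of $\Aut(X) \to \GL(H^2(X,\Z))$ is finite, and a unipotent isometry of a Lorentzian space (here $(H^{1,1}(X,\R),q)$, of signature $(1, h^{1,1}-1)$) is either the identity or has a unique non-trivial Jordan block, of size $3$. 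The first fact, together with Kronecker's theorem applied to the scalar by which $f^*$ acts on $H^{2,0}(X)$, forces $f^*|_{H^{1,1}}$ to be of infinite order as soon as $f$ is; the second fact then yields $k(f) = 2$.

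The Fujiki relation $\Delta_n(f,\go)^{2d} = c_X\, q(\Delta_n(f,\go))^d$ (with $c_X > 0$) reduces the problem to computing $\deg_n q(\Delta_n(f,\go))$. Using the $q$-invariance of $f^*$,
$$q(\Delta_n(f,\go)) \, = \, \sum_{m = -n}^{n}(n+1-|m|)\, \phi(m), \qquad \phi(m) \cnec q(\go, (f^m)^*\go),$$
and $\phi(m) = q(\go,\go) + m\, q(\go, N\go) + \binom{m}{2}\, q(\go, N^2\go)$ is a polynomial of degree $\le 2$ in $m$. To prove $\Plov(f) = 4d$ it suffices to show that the leading coefficient $q(\go, N^2\go)$ is strictly positive for some K\"ahler $\go$. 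Since $k(f) = 2$, the class $N^2\go$ is nef by the argument of Lemma~\ref{lem-N2i-1}(3) and is nonzero for generic K\"ahler $\go$; the strict positivity of $q$ between a K\"ahler class and a nonzero nef class on a hyper-K\"ahler manifold then finishes the argument. Realizability follows from $\Id_X$ (for $\Plov = 2d$) and from Oguiso's constructions of infinite-order automorphisms with $d_1 = 1$ and $k = 2$ on Hilbert schemes of projective K3 surfaces or on generalized Kummer varieties (for $\Plov = 4d$).

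For part (2) the plan is to prove directly that $f^*|_{N^1(X)_\R}$ has finite order; then $k(f) = 0$ and Theorem~\ref{Keeler}(7) yields $\rGK(X,f) = d$. As $\Nef(X)$ is a rational polyhedral cone with only finitely many extremal rays and $f^*$ preserves $\Nef(X)$, it permutes these rays; replacing $f$ by a power we may assume each extremal ray is fixed. Writing $v_1,\ldots,v_s$ for their primitive integral generators, $f^*(v_i) = c_i v_i$ with $c_i \in \Z_{>0}$. Since $\Nef(X)$ has nonempty interior, we may extract from $v_1,\ldots,v_s$ a basis of $N^1(X)_\R$, in which $f^*$ is diagonal with positive integer eigenvalues whose product equals $\det f^* \in \{\pm 1\}$; positivity forces the product to be $+1$, so all $c_i = 1$ and $f^* = \Id$ on $N^1(X)_\R$. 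The listed examples all have rational polyhedral nef cone: Mori dream spaces by definition, Fano manifolds via Birkar--Cascini--Hacon--McKernan, and projective toric varieties classically.

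The hard part of the proposition is the positivity $q(\go, N^2\go) > 0$ in part (1): one must combine the nefness of $N^2\go$ (coming from the theory of quasi-nef sequences in Section~\ref{sec-filtr}) with the hyperbolic geometry of $q$ on $H^{1,1}(X,\R)$, while choosing $\go$ so that $N^2\go \ne 0$, which is a Zariski open condition on the K\"ahler cone, nonempty precisely because $k(f) = 2$.
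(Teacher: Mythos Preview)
Your proof is correct and follows essentially the same route as the paper. Both reduce part (1) to the Fujiki relation $\Delta_n(f,\omega)^{2d} = c_X\, q(\Delta_n(f,\omega))^d$; the paper then cites \cite[Lemma~5.4]{AvB} for the fact that $q(\Delta_n(f,\omega))$ has degree $4$ in $n$, whereas you compute this directly via $q(\Delta_n)=\sum_{m=-n}^{n}(n+1-|m|)\,q(\omega,(f^m)^*\omega)$ and the positivity $q(\omega,N^2\omega)>0$ (which follows from the polarized Fujiki relation, since $N^2\omega$ is nef and nonzero for every K\"ahler $\omega$ once $k(f)=2$). For part (2) the paper simply asserts that a polyhedral nef cone forces $f^*|_{N^1(X)}$ to have finite order, while you spell out the extremal-ray permutation argument. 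One minor remark: once you have assumed $f^*$ is unipotent on $H^2(X,\R)$, it is automatically the identity on the one-dimensional $H^{2,0}(X)$, so the appeal to Kronecker at that point is unnecessary.
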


\begin{proof}
	(1) The reader is referred to \cite{Hu} for basics about compact hyper-K\"ahler manifolds. Note that a compact hyper-K\"ahler manifold has no global vector field other than $0$. Hence $\Aut (X)$ is discrete. Thus $f$ is of finite order if and only if $f^* : {H^{1,1}(X, \R)} \cto$ is of finite order.
	
	So, replacing $f$ by its power and using Proposition~\ref{prop1-Kahler} (2), we can assume that $f = \id_X$ or $f^* : {H^{1,1}(X, \R)} \cto$ is unipotent of infinite order. The result is clear when $f = \id_X$. In the rest, we will assume that $f^* : {H^{1,1}(X, \R)} \cto$ is unipotent of infinite order.
	
	Let $q_X(x)$ be Beauville-Bogomolov-Fujiki's quadratic form on $H^{1,1}(X, \R)$. The signature of $q_X(x)$ is $(1, h^{1,1}(X) -1)$.
	
	Let $\go$ be a K\"ahler class on $X$. Then the degree of the polynomial
	$q_X(P_{f,\go}(n))$ in Lemma \ref{lem35},
	with respect to $n$ is $2^2 = 4$ by~\cite[Lemma 5.4]{AvB}. The first part of (1)
	then follows from Fujiki's relation below (with positive constant $c_X > 0$):
	$$(x^{2d})_X = c_X(q_X(x))^d .$$
	
	For the realization part of (1), let $\varphi : S \to \P^1$ be a projective elliptic K3 surface whose Mordell-Weil group ${\rm MW}\,(\varphi)$ has an element of infinite order, say $f$. There are plenty of such K3 surfaces. Then $f \in \Aut (S)$ and it induces an automorphism $f^{[d]} \in \Aut ({\rm Hilb}^d(S)/\P^d)$ of infinite order. Here the Hilbert scheme $X := {\rm Hilb}^d(S)$ is a projective hyper-K\"ahler manifold of dimension $2d$ with the Lagrangian fibration ${\rm Hilb}^d(S) \to \P^d$ induced by $\varphi$. Hence $d_1(f^{[d]}) = 1$ as it preserves the pullback $h$ of the hyperplane class of $\P^d$, which is non-zero nef class on $X$ such that $q_X(h) = 0$.
	Thus $(X, f^{[d]})$ provides an example such that $\Plov (f^{[d]}) = 2 \dim X = 4d$. This completes the proof of (1).
	
	(2) By the assumption, $f^* : N^1(X) \cto$ is always of finite order (even though the order of $f$ itself can be often infinite). Thus we have $\Plov(f) = d$ by Theorem~\ref{uniform}.
\end{proof}

We finish this section with proofs of 
Corollary~\ref{cor-JBboundkd} (2)
by constructing explicit examples. 
The examples that we will construct
also appear in other complex dynamical contexts~\cite{OT, DLOZ}.

\begin{proof}[Proof of Corollary~\ref{cor-JBboundkd} (2)]
	Let $X_d = E^d$ ($d \ge 2$) be the $d$-fold self-product of an elliptic curve $E$
	and $f_d$ the automorphism of $X_d$ defined by
	$$f_d(x_1, x_2, \ldots, x_d) = (x_1, x_2 + x_1, \ldots, x_{d} + x_{d-1}),$$
	as in Example \ref{ex-prod_E}. We have $k(f_d) = 2d-2$~\cite[\S 4.1]{DLOZ}. 
	
	Let $C$ be a smooth projective curve of genus $g(C) \ge 2$ with a surjective morphism
	$\pi : C \to E$.
	Let
	$Y_d := C \times E^{d-1}$.
	Then
	$Y_d$ is a smooth projective variety with $\dim Y_d = d$ 
	and Kodaira dimension $\kappa(Y_d) = 1$. 
	We define $g_d \in \Aut (Y_d)$ by
	$$g_d(P, x_2, x_3, \ldots, x_{d}) = (P, x_2+\pi(P), x_3+x_2 \ldots, x_{d}+x_{d-1})$$
	We also define
	$$p : Y_d \to X_d\,\, ;\,\, (P, x_2, x_3, \ldots, x_{d}) \mapsto (\pi(P), x_2, x_3, \ldots, x_{d}).$$
	Then $p$ is a finite surjective morphism such that
	$f_d \circ p = p \circ g_d$,
	so $k(g_d) = k(f_d) = 2d-2$ by Proposition~\ref{pro-birJB}.
	Finally, for every smooth projective variety $V$ with $\gk(V) = \dim V$, 
	let $V_d \cnec Y_d \times V$ 
	and consider $\phi_d \cnec g_d \times \Id_V \in \Aut(V_d)$.
	We have 
	$$2(d-1) = k(g_d) \le k(g_d \times \Id_V) \le 2(\dim V_d - \gk(V_d)) = 2(d-1),$$
	where the second inequality follows from the first statement of Corollary~\ref{cor-JBboundkd}.
	So 
	$$ k(\phi_d) = 2(\dim V_d - \gk(V_d)).$$
	When $d$ and $V$ vary, any pair of positive integers
	$\dim V_d \ge 1$ and $\gk(V_d) \ge 1$ is realizable, which finishes the proof.
\end{proof}

\section{Twisted homogeneous coordinate rings and GK-dimensions: 
	Proofs of Theorem~\ref{thm-comp} and Corollary~\ref{uniformGK}} \label{sec-THCR}

In this section, 
we first relate the polynomial log-volume growths $\Plov(f)$ 
to the GK-dimensions $\GK (X,f)$
of twisted homogeneous coordinate rings
through Keeler's work~\cite{Ke}.
Then we prove Theorem~\ref{thm-comp} and
Corollary~\ref{uniformGK},
explaining how the results of $\Plov(f)$ 
imply the analogous statements for $\GK (X,f)$.

\ssec{Recollection of Keeler's work~\cite{Ke}}
\hfill

Following~\cite{Ke}, we recall the definition of
twisted homogeneous coordinate rings and related notions,
together with the fundamental properties proven in~\cite{AvB} and~\cite{Ke}.

Let $X$ be an irreducible projective variety defined over an algebraically closed field $\bk$ of characteristic $0$.
Let $f \in \Aut(X)$ be an automorphism.
We say that a line bundle $L$ on $X$ is {\it $f$-ample} if for any coherent sheaf $F$ on $X$, there is a positive integer $m_F$ such that
$$H^q(X, F \otimes L \otimes f^*L \otimes \ldots \otimes (f^m)^*L) = 0$$
for any integer $q > 0$ and for any integer $m > m_F$.
A Cartier divisor $D$ is called $f$-ample if $\cO(D)$ is $f$-ample.

Let $f \in \Aut(X)$ 
and let $L$ be a line bundle on $X$. 
For any integer $n \in \Z_{\ge 0}$, define
$$\Delta_n(f, L) \cnec  L \otimes f^*L \otimes \ldots \otimes (f^n)^*L,$$
and
$$B_{n+1} (X, f, L)  \cnec H^0(X, \Delta_n(f, L)) ,  \ \ \ \ B_{0} = H^0(X, \cO_X) = \bk.$$
The {\it twisted homogeneous coordinate ring} of $X$ associated to $(f, L)$
is the (noncommutative) 
associative graded $\bk$-algebra
$$B(X, f, L) := \oplus_{n \in \Z_{\ge 0}} B_n(X, f, L).$$
The study of $B(X, f, L)$
was initiated by Artin and Van den Bergh~\cite{AvB}.
Together with the seminal work of Keeler~\cite{Ke},
here are some fundamental properties they proved.
In the statements, $N^1(X) \cnec \NS(X)/(\torsion)$.

\begin{theorem}[Keeler, Artin--Van den Bergh]\label{Keeler}
	Let $X$ be
	a projective variety of dimension $d>0$ and let $f \in \Aut (X)$
	\begin{enumerate}
		\item
		$f$-ample line bundles exist
		if and only if $f^* : N^1(X) \cto$ is quasi-unipotent.
	\end{enumerate}
	
	In the following, 
	we assume that $f^* : N^1(X) \cto$ is quasi-unipotent. 
	\begin{enumerate}
		\setcounter{enumi}{1}
		\item
		$L$ is $f$-ample if and only if there exists an integer $n > 0$ such that
		$\Delta_n(f, L)$ is ample.
		In particular, any ample line bundle is $f$-ample.
		\item The GK-dimension $\GK\, B$
		is independent of the choice of an $f$-ample line bundle $L$.
		We therefore define the GK-dimension as
		$$\GK (X, f) := \GK B(X, f, L)$$
		for any choice of $f$-ample line bundle $L$.
		\item	
		The GK-dimension is a positive integer.
		More precisely,
		after replacing $f$ by its suitable positive power so that $f^* : N^1(X) \cto$ is unipotent,
		the self intersection number $(\Delta_n(f, L)^d)$  is a polynomial in $n$ and its degree satisfies:
		$$\rGK (X, f) \cnec \GK (X, f) - 1 = \deg_n (\Delta_n(f, L)^d).$$
		We call $\rGK (X, f)$ the reduced GK-dimension of $(X,f)$. 
	\end{enumerate}
\end{theorem}

\begin{proof}
	Let us just indicate the references where these statements are proven.
	Statement (1) is contained in~\cite[Theorem 1.2]{Ke}.
	Statements (2), (3), and (4) follow from~\cite[Lemma 4.1]{AvB}, and~\cite[Proposition 6.11, Theorem 6.1.(1)]{Ke}, respectively.
\end{proof}

When $X$ is a complex projective manifold,
Keeler's work implies Theorem~\ref{thm-comp}
as an immediate corollary
that the reduced GK-dimension of $(X,f)$ coincides with
the polynomial log-volume growth of $f$.
Together with Theorem~\ref{Keeler}, this suggests
unexpected relations between noncommutative algebra and
complex dynamics of automorphisms of zero entropy.

\begin{proof}[Proof of Theorem~\ref{thm-comp}]
	Since $f^* : {N^1(X)} \cto$ is quasi-unipotent,
	by Theorem~\ref{Keeler} (4) and the definition of $\Plov(f)$,
	given any ample line bundle $L$ on $X$, we have
	$$\rGK B = \deg_n (\Delta_n(f, L)^d)
	=  \limsup_{n \to \infty} \frac{\log \Delta_n(f, c_1(L))^d}{\log n} = \Plov(f).$$
\end{proof}

\ssec{From K\"ahler to projective }\label{ssec-GKcor}
\hfill

Let $X$ be a projective variety over $\C$ and let $f \in \Aut(X)$.
Let $\nu : \wt{X} \to X$ be a projective desingularization of $X$ such that
$$f \circ \nu = \nu \circ \ti{f}$$ 
for some $\ti{f} \in \Aut(\wt{X})$ (see \eg~\cite[Theorem 3.45]{KollarResSing} for the existence).
Before we prove Corollary~\ref{uniformGK},
first we identify some dynamical properties and invariants of 
$(X,f)$ as a projective variety with those of $(\wt{X},\ti{f})$ as a compact K\"ahler manifold.

\begin{lem}\label{lem-equiv0ent}
	The following conditions are equivalent.
	\begin{enumerate}
		\item $f^* : {N^1(X)} \cto$ is quasi-unipotent.
		\item $\ti{f}^* : {N^1(\wt{X})} \cto$ is quasi-unipotent.
		\item $\ti{f}$ has zero entropy.
	\end{enumerate}
\end{lem} 

\begin{proof}
	We define $d_1(f)$ as in~\eqref{eqn-projddeg} but replacing $\go$ by an ample divisor. Then the same proof of~\cite[Proposition A.2 and Lemma A.7]{NZ} says that in the definition $d_1(f)$, we can assume that $\go$ is a nef and big divisor instead, and $d_1(f)$ is the spectral radius of $f^* : N^1(X) \cto$. Note that $d_1(f) = 1$ if and only if $f^* : {N^1(X)} \cto$ is quasi-unipotent
	by Kronecker's theorem.
	Then projection formula shows $d_1(f) = d_1(\ti{f})$, hence (1) and (2) are equivalent. 
	The equivalence between (2) and (3) follows from Lemma~\ref{lem:equiv_dyn},
	as we can compute $d_1(\ti{f})$ using ample classes (which also lie in $N^1(\wt{X})$).
\end{proof}

Assume that $f^* : {N^1(X)} \cto$ is quasi-unipotent. 
Then
$$\|(f^{m})^*: N^1(X) \otimes \C \circlearrowleft\| \sim_{m \to \infty} Cm^{k_\NS(f)}$$
for some $k_\NS(f) \in \Z_{\ge 0}$ and $C > 0$.

\begin{lem}\label{lem-kNS}
	We have
	$k_\NS(f) = k(\ti{f})$.
\end{lem}
\begin{proof}
	First of all, the same argument proving Proposition~\ref{pro-birJB}
	shows that
	$k_\NS(f) = k_\NS(\ti{f})$.
	It suffices to show that $k_\NS(\ti{f}) = k(\ti{f})$.
	
	Since $k(\ti{f})$ is invariant under finite iterations,
	by Lemma~\ref{lem-equiv0ent} and Lemma~\ref{lem:equiv_dyn}
	we can assume that $\ti{f}^* : H^{1,1}(\wt{X}) \cto$ is unipotent.
	As the ample cone of $\wt{X}$ spans $\NS(\wt{X})_\R$
	we can thus find an ample class $\go$ of $\wt{X}$ such that 
	$$(\ti{f}^* - \Id)^{k_\NS(\ti{f})}(\go) \ne 0 \ \ \ \text{ and } \ \ \  
	(\ti{f}^* - \Id)^{k_\NS(\ti{f}) + 1}(\go) = 0.$$
	Hence $k_\NS(\ti{f}) = k(\ti{f})$ by Proposition~\ref{pro-filtr} (2).
\end{proof}

\begin{lem}\label{lem-GKbir}
	We have
	$$\GK (X, f) = \GK B(X,f,L)$$
	for any big and nef line bundle $L$. As a consequence,
	$$\rGK (X, f) = \rGK (\wt{X}, \ti{f}) = \Plov(\ti{f})$$
	and $\GK (X, f)$ is a birational invariant.
\end{lem}

\begin{proof}
	Since $\GK (X, f)$ is the polynomial degree of $n \mapsto \deg_n (\Delta_n(f, L)^d)$
	by Theorem~\ref{Keeler} (4),
	the same argument of Lemma~\ref{lem-sminvVol} proves the first assertion of Lemma~\ref{lem-GKbir}.
	
	Let $L$ be an ample line bundle on $X$, since
	$$
	\(\sum_{i = 0}^n (\ti{f}^i)^*(\nu^*c_1(L))\)^d
	= \nu^*\(\sum_{i = 0}^n (f^i)^*c_1(L)\)^d
	= \(\sum_{i = 0}^n (f^i)^*c_1(L)\)^d,$$
	by Theorem~\ref{Keeler} (4) we have 
	$$ \GK B(\wt{X},\ti{f},\nu^*L) = \GK B(X,f,L) = \GK(X,f)$$
	As $\nu^*L$ is nef and big, it follows from the first statement
	that $\GK B(\wt{X},\ti{f},\nu^*L) = \GK(\wt{X},\ti{f})$,
	which finishes the proof of Lemma~\ref{lem-GKbir}.
\end{proof}

In the corollary below, $k(f)$ is defined with $f^* : H^{1,1}(X) \cto$ replaced by $f^* : N^1(X) \cto$ (denoted as $k_\NS(f)$ in Subsection~\ref{ssec-GKcor}).

\begin{cor}\label{uniformGK}
	Let $X$ be
	a projective variety of dimension $d > 0$ over $\bk$, not necessarily smooth,
	and let $f \in \Aut(X)$ be an automorphism such that $f^* : N^1(X) \cto$ is quasi-unipotent.
	Then the analogous statements of Theorem~\ref{uniform},  Corollary~\ref{cor-d=3}, 
	inequality~\eqref{ineq-dge4} (Proposition~\ref{uniformdge4}),
	etc.,
	hold
	with $\Plov(f)$ replaced by $ \rGK (X, f)$
	under the same assumptions on $d$ and $k(f)$. 
	In particular, we have
	$$ \rGK(X,f) \in \{3,5,9\}$$
	if $d = 3$ (by Corollary~\ref{cor-d=3}), and
	$$ \rGK(X,f) \le 2d^2-3d - 2$$
	whenever $d \ge 4$ (by Proposition~\ref{uniformdge4}).
\end{cor}

\begin{proof}
	By the Lefschetz principle, we can assume that the pair $(X,f)$ is defined over $\bk = \C$.
	Corollary~\ref{uniformGK} then follows the existence of 
	equivariant projective desingularization (see \eg~\cite[Theorem 3.45]{KollarResSing}),
	together with the comparison results Lemma~\ref{lem-GKbir} and Lemma~\ref{lem-kNS}.
\end{proof}

\section*{Acknowledgments}

	We thank Professor Tien-Cuong Dinh for the collaboration of our previous work \cite{DLOZ}, which is crucial in this work, and his warm encouragement. We also thank Professor Yoshinori Gongyo for the invitation to
	talk for one of us at the Tokyo-Kyoto Algebraic Geometry Zoom Seminar on 11th November 2020, which made our collaboration possible. We thank Professor Serge Cantat and Doctor Fei Hu for valuable comments,
	and Professor S. Paul Smith for his interest 
	and for bringing his
	related work to our attention.
	Finally, we thank the referee for the invaluable questions and comments.
	
	We are supported by the Ministry of Education Yushan Young Scholar Fellowship (NTU-110VV006),
	and the National Science and Technology Council
	(110-2628-M-002); the JSPS grant 20H00111, 20H01809; ARF: A-8000020-00-00, A-8002487-00-00 of NUS.

\end{document}